\newtheorem{theorem}{Theorem}[section]
\newtheorem{lemma}[theorem]{Lemma}
\newtheorem{definition}[theorem]{Definition}
\newtheorem{example}[theorem]{Example}
\newtheorem{proposition}[theorem]{Proposition}
\newtheorem{corollary}[theorem]{Corollary}
\newenvironment{acknowledgement}[1][Acknowledgements]
{\begin{trivlist} \item[\hskip \labelsep {\bfseries #1}]}
{\end{trivlist}}
\newtheorem{remark}[theorem]{Remark}
 \DeclareMathSymbol{\N}{\mathbin}{AMSb}{"4E}
\DeclareMathSymbol{\Z}{\mathbin}{AMSb}{"5A}
\DeclareMathSymbol{\R}{\mathbin}{AMSb}{"52}
\DeclareMathSymbol{\Q}{\mathbin}{AMSb}{"51}
\DeclareMathSymbol{\I}{\mathbin}{AMSb}{"49}
\DeclareMathSymbol{\C}{\mathbin}{AMSb}{"43}
\def\A{{\mathbb A}}
\def\X{{\mathbb X}}
\def\Y{{\mathbb Y}}
\def\CO{{\mathscr{C}}}
\numberwithin{equation}{section}
\begin{document}

\title{Families of algebraic varieties parametrized by topological spaces}

\author{Jyh-Haur Teh\\
{\footnotesize Department of Mathematics, National Tsing Hua
University of Taiwan}}

\date{}
\maketitle

\footnotetext[1]{E-mail address: jyhhaur@math.nthu.edu.tw}

\begin{abstract}
We study families of algebraic varieties parametrized by topological
spaces and generalize some classical results such as Hilbert
Nullstellensatz and primary decomposition of commutative rings. We
show that there is an equivalence between the category of bivariant
coherent sheaves and the category of sheaves of finitely generated
modules.
\end{abstract}

\section{Introduction}
Algebraic varieties are defined by the zero sets of some polynomials
over some fields. When the field is $\C$, with the natural analytic
topology, algebraic varieties have the homotopy types of
CW-complexes. Topological considerations may unveil some very
interesting properties of algebraic varieties. For example, the
homotopy groups of the groups of algebraic cycles of varieties which
are called the Lawson homology groups, are important invariants of
varieties (see \cite{Law, F}). Therefore we are interested in
studying varieties parametrized by topological spaces, especially by
the sphere $S^n$. Since the standard unit sphere $S^n\subset
\R^{n+1}$ is a real algebraic variety, we may consider polynomials
with coefficients of real rational functions on $S^n$. It is clear
that the zero sets defined by such polynomials may not vary
continuous. But out of a set of measure zero, they do vary
continuously, namely, the Hilbert polynomial of each fibre is the
same except over a set of measure zero. To study such families, we
develop a foundation theory in this paper. We generalize several
classical results such as Hilbert Nullstellensatz and the primary
decomposition of commutative rings.

In the following, we give a brief overview of each section. In
section 2, we define algebraic varieties over topological spaces and
their coordinate rings. In section 3, we study Groebner ideals in
polynomial rings with continuous coefficients, introduce measure
topology, condensed spaces, condensed ideals, almost Groebner
ideals, prove a Hilbert Nullstellensatz and an almost primary
decomposition theorem. In section 4, we develop a theory of
bivariant coherent sheaves over semi-topological algebraic sets, and
show that there is an equivalence of the category of almost sheaves
of $K[\X]$-modules and the category of bivariant quasi-coherent
sheaves on $\X$.

\begin{acknowledgement}
The author thanks Li Li for his very helpful comments, and Eric
Friedlander, Christian Haesemeyer, Mark Levine for their
encouragement. Special thanks to my wife Yu-Wen Kao for her
understanding. He also thanks the Taiwan National Center for
Theoretical Sciences (Hsinchu) for providing a wonderful working
environment.
\end{acknowledgement}

\section{Affine algebraic sets}
\begin{definition}
Given a ringed space $(S, \mathcal{O}_S)$. For each open set
$U\subseteq S$, we define
$$(\mathcal{O}_S[x_1, ..., x_n])(U):=\mathcal{O}_S(U)[x_1, ...,
x_n]$$ where $\mathcal{O}_S(U)[x_1, ..., x_n]$ is the ring of
polynomials of $n$ variables with coefficients in
$\mathcal{O}_S(U)$. In other words, it is the collection of all
polynomials of the form
$$f(x)=\sum_{|J|\leq k}a_Jx^J$$ for some $k\geq 0$ where $J\in \Z^n_{\geq
0}$, $|J|=i_1+\cdots +i_n$ if $J=(i_1, ..., i_n)$, and all $a_J\in
\mathcal{O}_S(U)$. The degree of $f$ is defined to be the maximal
$|J|$ if $a_J$ is not identically zero. The restriction maps of
$\mathcal{O}_S[x_1, ..., x_n]$ are induced by restricting
coefficients.
\end{definition}

Note that $\mathcal{O}_S[x_1, ..., x_n]$ is in general not a sheaf.
After we introduce the concept of a condensed space and almost
sheaf, we will see that $\mathcal{O}_S[x_1, ..., x_n]$ is actually
an almost sheaf.

In this paper, we denote by $S$ for a topological space and
$\mathscr{C}$ the sheaf of germs of continuous complex-valued
functions.

\begin{definition}
Given a topological space $(S, \mathscr{C})$. The affine $n$-space
over $S$ is the set $\A^n_S:=S\times \C^n$. We denote by
$\pi:S\times \C^n \rightarrow S$ the natural projection.
\end{definition}

\begin{definition}
Given a topological space $(S, \CO)$. Let $J\subseteq
\mathscr{C}[x_1, ..., x_n](S)$ be an ideal of the global sections.

Let
$$\X:=\{(s, x)|f_s(x)=0, \mbox{ for all } f\in J\}\subseteq \A^n_S$$
where $f_s$ is the restriction of $f$ to the fibre $\pi^{-1}(s)$ of
$s$. $\X$ is called an affine algebraic set over $S$. It induces a
presheaf of sets on $S$ defined by
$$\X(U):=\X\cap \pi^{-1}(U)$$ where $U$ is an open subset of $S$.

For $U$ an open set of $S$, we write
$$J|_U:=\{f|_U|f\in J\}$$

By abuse of notation, we also write $J$ for the ideal sub-presheaf
of $\mathscr{C}[x_1, ..., x_n]$ induced by $J$ which is defined by
$$J(U)=\mbox{ ideal of } \mathscr{C}[x_1, ..., x_n](U) \mbox{
generated by } J|_U$$

The vanishing presheaf of $\X$, denoted by $I(\X)$, is the presheaf
defined by
$$I(\X)(U):=\{f\in \CO[x_1, ..., x_n](U)|f(\X(U))=0\}$$

The radical presheaf of $J$ is defined by
$$\sqrt{J}(U):=\{f\in \CO[x_1, ..., x_n](U)|f^n\in J(U)
\mbox{ for some } n\in \N\}$$

The fibrewise radical presheaf of $J$ is defined by
$$FRad(J)(U):=\{f\in \CO[x_1, ..., x_n](U)|f_s\in \sqrt{J(U)|_s}, s\in
U\}$$ where $J(U)|_s=\{g_s|g\in J(U)\}$.

We write
$$V(J)(U):=\{(s, x)|f_s(x)=0, f\in J(U)\}$$ and

$$\X_s:=\X\cap \pi^{-1}(s)$$ for $s\in S$.
\end{definition}

\begin{definition}\label{coordinate almost sheaf}
Let $(S, \mathscr{C})$ be a topological space and $\X\subseteq
\A^n_S$ be an affine algebraic set over $S$. For an open set
$U\subseteq S$, the presheaf which assigns an open set $U$ the ring
$$K[\X](U):=\CO[x_1, ..., x_n](U)/I(\X)(U)$$ is called the coordinate presheaf of $\X$.
\end{definition}

The following example shows that we can not expect an analog of
Hilbert's Nullstellensatz to hold for arbitrary ideals.

\begin{example}
Let $S=[-1, 1]$ be the closed interval from $-1$ to $1$. Let
$I=<e^{-\frac{1}{s}}>\subseteq \CO[x](S)$, $\X=V(I)=0\times \C$,
$f_s(x)=s$. Then $f\in \mathscr{C}[x](S)$ and $f(\X)=0$. Since
$\lim_{s\to 0}|\frac{s^n}{e^{-\frac{1}{s}}}|=\infty$ for any $n\in
\N$, therefore $f^n\notin I$ for any $n\in \N$. This shows that
$f\in FRad(I)$ but $f\notin Rad(I)$.
\end{example}

\begin{proposition}
Given a topological space $(S, \mathscr{C})$ and $I\subseteq
\CO[x_1, ..., x_n]$ be an ideal. Then $IV(I)=FRad(I)$.
\label{Hilbert's Nullstellensatz}
\end{proposition}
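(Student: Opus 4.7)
The plan is to verify the equality of presheaves open-set by open-set and, on each open set, reduce both inclusions to the classical Hilbert Nullstellensatz over $\C$ applied fibrewise.

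Fix an open set $U \subseteq S$. First I would reconcile $V(I)(U)$ with the fibres one actually cares about. Because $I(U)$ is by definition the $\CO(U)$-ideal generated by $\{h|_U : h \in I\}$, an element $f \in I(U)$ is a finite combination of such restrictions, so the condition $f_s(x) = 0$ for every $f \in I(U)$ is equivalent to $h_s(x) = 0$ for every $h \in I$. Thus the fibre $V(I)(U) \cap \pi^{-1}(s)$ equals the classical affine variety $V(I(U)|_s) \subseteq \C^n$, where $I(U)|_s$ is the (honest) ideal of $\C[x_1,\ldots,x_n]$ obtained by evaluating coefficients at $s$ (the map $\CO(U) \to \C$, $a \mapsto a(s)$, being a ring homomorphism).

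For the inclusion $IV(I)(U) \subseteq FRad(I)(U)$, take $f \in IV(I)(U)$ and $s \in U$. Then $f_s$ vanishes on $V(I(U)|_s) \subseteq \C^n$, so the classical Nullstellensatz over the algebraically closed field $\C$ produces an integer $m \ge 1$ with $f_s^{m} \in I(U)|_s$. Since $s \in U$ was arbitrary this says precisely that $f \in FRad(I)(U)$. Conversely, for $FRad(I)(U) \subseteq IV(I)(U)$, take $f \in FRad(I)(U)$ and $(s,x) \in V(I)(U)$. By definition there exist $m$ and $g \in I(U)$ with $f_s^{m} = g_s$ in $\C[x_1,\ldots,x_n]$, and evaluating at $x$ gives $f_s(x)^{m} = g_s(x) = 0$, whence $f_s(x) = 0$.

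The only substantive input is the classical Nullstellensatz; everything else is unwinding of the definitions, with the main subtlety being to check that $I(U)|_s$ is a genuine ideal of $\C[x_1,\ldots,x_n]$ so that the fibrewise classical theorem is applicable, and that taking such fibres is compatible with passage from global to local generators. I do not expect a serious obstacle here: in contrast with the radical $\sqrt{J}$, which as the example preceding the proposition shows is too small because inverses of coefficients like $e^{-1/s}$ are not continuous on $S$, the fibrewise radical $FRad$ has been tailored precisely so that no such analytic obstruction appears, and the argument reduces pointwise to the algebraic theorem over $\C$.
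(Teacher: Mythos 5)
Your proof is correct and follows essentially the same route as the paper: the inclusion $IV(I)(U)\subseteq FRad(I)(U)$ is obtained by applying the classical Nullstellensatz over $\C$ to each fibre ideal $I(U)|_s$, and the reverse inclusion is a direct evaluation argument (the paper dismisses it as ``trivial''). Your additional checks --- that $I(U)|_s$ is an honest ideal of $\C[x_1,\ldots,x_n]$ and that the fibre of $V(I)(U)$ over $s$ is $V(I(U)|_s)$ --- are exactly the definitional unwindings the paper leaves implicit.
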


\begin{proof}
If $f\in IV(I)(U)$, then $f(V(I(U)))=0$. Therefore
$f_s(V(I(U))|_s)=0$ for all $s\in U$. By Hilbert's Nullstellensatz,
we have $f_s\in \sqrt{I(U)|_s}$ for $s\in S$. Hence $f\in
FRad(I)(U)$. Another direction is trivial.
\end{proof}

\begin{definition}
Let $S$ be a compact topological space. If $a\in \CO(U)$ for
$U\subseteq S$, we write $|a(s)|$ for the Euclidean norm of the
complex number $a(s)$. For $f, g\in \CO[x_1, ..., x_n](S)$, we
define a norm by
$$||f-g||:=\sup_{s\in S}\{|a_{\alpha}(s)-b_{\alpha}(s)|\ | \alpha\in
A\}$$ where $f=\sum_{\alpha\in A}a_{\alpha}(s)x^{\alpha},
g=\sum_{\alpha\in A}b_{\alpha}(s)x^{\alpha}$, and $A$ is a finite
subset of $\Z^n_{\geq 0}$.
\end{definition}

\begin{proposition}
Let $S$ be a compact topological space. Let $I\subseteq
\mathscr{C}[x_1, ..., x_n](S)$ be an ideal. If $I$ is a fibrewise
radical ideal, i.e., $FRad(I)=I$, then $I$ is closed in the norm
defined above.
\end{proposition}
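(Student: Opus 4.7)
The approach is to combine the fibrewise Nullstellensatz (Proposition~\ref{Hilbert's Nullstellensatz}) with the continuity of evaluation functionals on polynomials of bounded support. Since $FRad(I)=I$, Proposition~\ref{Hilbert's Nullstellensatz} gives $I = IV(I)$, so membership in $I$ is equivalent to the pointwise vanishing condition $f_{s_0}(x_0)=0$ for every $(s_0,x_0)\in V(I)(S)$. This turns closedness in the norm into a continuity statement for the evaluation maps.

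Fix a finite $A\subseteq \Z^n_{\geq 0}$ and work in the subspace $\CO[x_1,\ldots,x_n]_A(S)$ consisting of polynomials with support contained in $A$, on which the formula from the preceding definition genuinely defines a norm. Suppose $f_k\in I$ all have support in $A$, and $f_k\to f$ in the norm. First I would verify that the limit lives in $\CO[x_1,\ldots,x_n]_A(S)$: for each $\alpha\in A$ the coefficient sequence $a^k_\alpha$ is Cauchy in $C(S,\C)$ with the uniform norm, which is complete because $S$ is compact, so it converges uniformly to a continuous function $a_\alpha$. Hence $f=\sum_{\alpha\in A}a_\alpha x^\alpha$ has coefficients in $\CO(S)$.

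The main step is the continuity of evaluation $\mathrm{ev}_{(s_0,x_0)}\colon g\mapsto g_{s_0}(x_0)$ on $\CO[x_1,\ldots,x_n]_A(S)$. Because $A$ is finite, one has
$$|\mathrm{ev}_{(s_0,x_0)}(g)|\leq \|g\|\cdot\sum_{\alpha\in A}|x_0^\alpha|,$$
with the right-hand sum a finite constant depending only on $(x_0,A)$. For $(s_0,x_0)\in V(I)(S)$ we have $f_{k,s_0}(x_0)=0$ for every $k$, and passing to the limit gives $f_{s_0}(x_0)=0$. Since this holds on all of $V(I)(S)$, we conclude $f\in IV(I)=I$.

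I do not anticipate any serious difficulty; the argument is essentially a one-line continuity-of-evaluation once the Nullstellensatz reformulation is in hand. The only subtlety worth flagging is the interpretation of the norm: it is defined on polynomials sharing a common finite support index set $A$, and the argument crucially uses the finiteness of $A$ to bound the evaluation functional. Without this restriction one could construct sequences whose supports grow unboundedly and converge in coefficient sup-norm to polynomials outside $I$, so the bounded-support reading is essential for the statement to hold.
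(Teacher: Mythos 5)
Your proposal is correct and follows essentially the same route as the paper: norm convergence implies pointwise convergence of the evaluations $f_{k,s_0}(x_0)$, so the limit vanishes on $V(I)$, and then $f\in IV(I)=FRad(I)=I$ by Proposition~\ref{Hilbert's Nullstellensatz}. Your added care about the finiteness of the support set $A$ and the explicit bound on the evaluation functional just makes precise the paper's one-line assertion that norm convergence implies pointwise convergence.
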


\begin{proof}
Suppose that $f_i$ approaches $f$ where $f_i\in I$ and $f\in
\CO[x_1, ..., x_n](S)$. The convergence of $f_i$ in the norm to $f$
implies that $f_i$ converges to $f$ pointwise. Since $f_i(V(I))=0$
for each $i$, we have $f(V(I))=0$. Hence $f\in IV(I)=FRad(I)=I$.
\end{proof}

In order to prove a version of Hilbert's Nullstellensatz to relate
$IV(J)$ and $\sqrt{J}$, we need to have a bound $e$ such that if
$f_{s_0}\in \sqrt{I|_{s_0}}$, then $f_{s}^e\in I|_s$ for $s$ in a
neighborhood of $s_0$. For the convenience of the reader, we recall
a result of Brownawell (see corollary of Theorem 1 of
\cite{Brown87}).

\begin{theorem}(Brownawell)
If $Q, P_1, ..., P_m\in \C[x_1, ..., x_n]$ have degrees $\leq D$ and
if $Q$ vanishes on all the common zeros of $P_1, ..., P_m$ in
$\C^n$, then there are $e\in \N$ and polynomials $B_1, ..., B_m\in
\C[x_1, ..., x_n]$ with
$$e\leq e':=(\mu+1)(n+2)(D+1)^{\mu+1}$$
$$deg B_iP_i\leq eD+e'$$
where $\mu=min\{m, n\}$ such that
$$Q^e=B_1P_1+\cdots +B_mP_m.$$\label{Brownawell}
\end{theorem}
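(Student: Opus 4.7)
The plan is to prove an effective Nullstellensatz by passing to projective space, bounding intersection degrees via B\'ezout, and extracting an algebraic identity through elimination theory. First I would homogenize: introduce an auxiliary variable $x_0$, let $P_i^* \in \C[x_0, x_1, \ldots, x_n]$ be the degree-$\leq D$ homogenization of $P_i$, and likewise define $Q^* = x_0^{\deg Q} Q(x_1/x_0, \ldots, x_n/x_0)$. The hypothesis that $Q$ vanishes on the common affine zeros of the $P_i$ translates, after enlarging the system by $x_0$, to $Q^*$ vanishing on the common projective zero locus of $P_1^*, \ldots, P_m^*, x_0$ in $\P^n$.

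Second, after a generic linear change of coordinates to put the system in general position, I would apply B\'ezout's theorem iteratively to bound the degree of the intersection cycle of the $P_i^*$ by $D^\mu$, where $\mu = \min\{m,n\}$. Third, from this geometric control, I would extract a membership identity in the homogeneous coordinate ring of the form $x_0^a\,(Q^*)^e \equiv \sum_i B_i^* P_i^*$ modulo a suitably controlled power of $x_0$, with the exponent $e$ and the degrees of the $B_i^*$ tracked via Chow forms (or U-resultants). Dehomogenizing by setting $x_0 = 1$ produces the desired identity $Q^e = \sum_i B_i P_i$ in $\C[x_1, \ldots, x_n]$, and the bounds on $e$ and on $\deg(B_i P_i)$ then fall out of the bookkeeping of multi-degrees.

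The main obstacle is obtaining the sharp single-exponential bound $e \leq (\mu+1)(n+2)(D+1)^{\mu+1}$ rather than the doubly-exponential bound produced by the classical Hermann--Seidenberg ideal-membership procedure. This sharpening requires genuine analytic-arithmetic input: either the transcendence-theoretic machinery of Philippon and Brownawell, based on auxiliary polynomials together with Liouville-style zero estimates, or, in subsequent proofs, duality for residual intersections in the spirit of Koll\'ar. Since the precise constants in the stated bound are the output of this delicate analysis, the natural course for the present paper is to invoke the theorem directly from \cite{Brown87} rather than reprove it; the statement is used here only as a black box to supply the uniform exponent $e$ needed for the Nullstellensatz arguments to follow.
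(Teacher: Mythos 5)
Your conclusion is the right one and matches the paper exactly: the paper offers no proof of this theorem, stating it only as a recalled result with the citation to \cite{Brown87}, so invoking it as a black box is precisely what is done. Your accompanying sketch of the homogenization--B\'ezout--elimination strategy is a reasonable outline of how effective Nullstellens\"atze are approached, and you correctly identify that the sharp single-exponential constants require the deep Brownawell--Philippon (or Koll\'ar) machinery that cannot be reproduced here.
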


With the help of topology, we can say more about the Hilbert's
Nullstellensatz for a principal ideal.

\begin{proposition}
Let $S$ be a compact topological space. Let $g\in \CO[x_1, ...,
x_n](S)$ be such that $g_s$ is not is not a zero polynomial for
$s\in S$. Then $FRad(<g>)=\sqrt{<g>}$ as presheaves.\label{principal
Hilbert Nullstellensatz}
\end{proposition}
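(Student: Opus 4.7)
The inclusion $\sqrt{\langle g\rangle}(U) \subseteq FRad(\langle g\rangle)(U)$ is immediate: if $f^k = h\cdot g|_U$ on $U$, then $f_s^k = h_s g_s \in \langle g_s\rangle$ for every $s \in U$.

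For the reverse inclusion, fix an open $U \subseteq S$ and $f \in FRad(\langle g\rangle)(U)$, so that $f_s \in \sqrt{\langle g_s\rangle}$ for each $s \in U$. The goal is to produce a single exponent $e \in \N$, independent of $s$, together with a polynomial $h \in \CO[x_1,\ldots,x_n](U)$ satisfying $f^e = hg$. First, apply Theorem \ref{Brownawell} fibrewise with $m=1$, $P_1 = g_s$, $Q = f_s$, and $D := \max\{\deg f,\deg g\}$, which bounds $\deg f_s$ and $\deg g_s$ uniformly in $s$. This produces a uniform $e := 2(n+2)(D+1)^2$ such that $f_s^e \in \langle g_s\rangle$ for every $s \in U$ (if the theorem yields a smaller exponent $e_s \leq e$, simply multiply by $f_s^{e - e_s}$). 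Write $f_s^e = B_s g_s$; since $g_s \neq 0$ and $\C[x_1,\ldots,x_n]$ is an integral domain, $B_s$ is the unique such quotient, and $\deg B_s = e\deg f_s - \deg g_s \leq eD =: N$. Thus $s \mapsto B_s$ is a well-defined function from $U$ into the finite-dimensional complex vector space $V := \C[x_1,\ldots,x_n]^{\leq N}$.

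The crucial step is to show this function is continuous, for then $h := B_{\bullet}$ lies in $\CO[x_1,\ldots,x_n](U)$ and $f^e = hg$, placing $f$ in $\sqrt{\langle g\rangle}(U)$. Fix $s_0 \in U$ and consider the linear map $L_s : V \to \C[x_1,\ldots,x_n]^{\leq N+\deg g}$ given by $L_s(B) = Bg_s$. Its matrix in the monomial bases depends continuously on $s$ through the coefficients of $g_s$, and $L_s$ is injective because $g_s \neq 0$ and the polynomial ring is a domain. Equip $V$ with any norm. For a sequence $s_n \to s_0$ in $U$: if $\|B_{s_n}\| \to \infty$ along a subsequence, then the unit vectors $B_{s_n}/\|B_{s_n}\|$ have a convergent subsubsequence with limit $B'$ of unit norm, and passing to the limit in $L_{s_n}(B_{s_n}/\|B_{s_n}\|) = f_{s_n}^e / \|B_{s_n}\| \to 0$ yields $L_{s_0}(B') = 0$, contradicting injectivity. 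Hence $(B_{s_n})$ is bounded; any subsequential limit $B''$ satisfies $L_{s_0}(B'') = f_{s_0}^e = L_{s_0}(B_{s_0})$, so $B'' = B_{s_0}$ again by injectivity, and therefore $B_{s_n} \to B_{s_0}$.

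The principal obstacle is precisely this continuity step, and the hypothesis that $g_s$ is nowhere the zero polynomial is exactly what makes it work: it is equivalent to the injectivity of every $L_s$, and without injectivity the quotient $B_s$ would be neither unique nor guaranteed to vary continuously. Brownawell's effective bound is used only to shrink the problem to a fixed finite-dimensional target, after which the argument reduces to a soft compactness plus injectivity statement in linear algebra.
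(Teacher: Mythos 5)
Your proof is correct, and at the top level it follows the same strategy as the paper: use Brownawell's bound fibrewise to get a single exponent $e$ with $f_s^e\in\langle g_s\rangle$ for all $s$, write $f_s^e=B_sg_s$, and then show that the quotient $B_s$ varies continuously so that it assembles into an element of $\CO[x_1,\ldots,x_n](U)$. Where you genuinely diverge is in the continuity step, which is the entire content of the proposition. The paper argues that since the degrees of the $c_s$ are bounded, ``the limit $\lim_{s\to s_0}c_s$ taking under the norm defined above exists,'' and only then uses the nonvanishing of $g_{s_0}$ to identify that limit with $c_{s_0}$; but bounded degree gives only that the quotients live in a fixed finite-dimensional space, not that they converge, so as written the paper's argument has a gap exactly at the point it needs to work hardest. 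Your argument closes that gap: you observe that multiplication by $g_s$ is an injective linear map $L_s$ between fixed finite-dimensional spaces depending continuously on $s$, rule out blow-up of $\|B_{s}\|$ by normalizing and passing to a limit (which would force a unit vector into $\ker L_{s_0}$), and then identify every subsequential limit with $B_{s_0}$ by injectivity. This is the soft compactness-plus-injectivity statement that the paper's phrase ``the limit exists'' is silently relying on, and your version correctly isolates the role of the hypothesis that $g_s$ is never the zero polynomial. The only caveat, shared with the paper, is that the argument is phrased with sequences $s_n\to s_0$, whereas $S$ is only assumed to be a compact topological space; for full generality one should run the identical argument with nets, which changes nothing.
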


\begin{proof}
If $h\in FRad(<g>)(U)$, then by Theorem \ref{Brownawell}, there is
$N$ such that $h^N_s\in <g_s>$ for all $s\in U$. Hence we can find
$c_s\in \mathscr{C}[x_1, ..., x_n](U)$ such that $h^N_s=c_sg_s$. We
need to show that $c$ is continuous in $s$. Fix $s_0\in S$. Then
$h^N_s-h^N_{s_0}=(c_s-c_{s_0})g_s+c_{s_0}(g_s-g_{s_0})$. Since the
degrees of $\{c_s|s\in U\}$ are bounded, the limit $\lim_{s \to
s_0}c_s$ taking under the norm defined above exists. So we have
$\lim_{s\to s_0}(c_s-c_{s_0})g_{s_0}=0$. Since $g_{s_0}$ is not the
zero polynomial which implies that $\lim_{s\to s_0}(c_s-c_{s_0})=0$
and we have $\lim_{s \to s_0}c_s=c_{s_0}$. The continuity of $c$
implies that $h\in \sqrt{<g>}(U)$.
\end{proof}

\section{Groebner ideals}
In order to generalize Hilbert Nullstellensatz, we work on nice
topological spaces and ideals with nice properties. We develop the
theory of Groebner ideals of the ring of continuous functions over
condensed spaces. The results of this chapter play fundamental role
in our theory. The main reference for Groebner ideals is \cite{CLS}.

\begin{definition}
Fix a monomial ordering (see \cite[Definition 2.2.1]{CLS}) of
$\C[x_1, ..., x_n]$. Let $S$ be a topological space. For $h\in
\CO[x_1, ..., x_n](S)$, write
$$h_s(x)=a_1(s)x^{\alpha_1}+a_2(s)x^{\alpha_2}+\cdots
+a_k(s)x^{\alpha_k}$$ where $x^{\alpha_1}>x^{\alpha_2}>\cdots
>x^{\alpha_k}$ according to the monomial ordering where $\alpha_i=(a_1, ...,
a_n)$ is a multi-index. As in \cite{CLS}, we denote by
$LT(h)=a_1(s)x^{\alpha_1}$ the leading term of $h$, $LC(h)=a_1(s)$
the leading coefficient of $h$, $LM(h)=x^{\alpha_1}$ the leading
monomial of $h$ and $multideg(h)=\alpha_1$ the multi degree of $h$.
For $f\in \CO[x_1, ..., x_n](S)$, we say that the leading term
$LT(f)$ of $f$ is nonvanishing on $S$ if $LC(f)$ is nonvanishing on
$S$.
\end{definition}

\begin{definition}
Let $S$ be a topological space. Let $f, g\in \mathscr{C}[x_1, ...,
x_n](S)$ be two nonzero polynomials. Let $\alpha=(\alpha_1, ...,
\alpha_n)=multideg(f), \beta=(\beta_1, ..., \beta_n)=multideg(g)$
and $\gamma_i=max\{\alpha_i, \beta_i\}$, $\gamma=(\gamma_1, ...,
\gamma_n)$. If $LT(f), LT(g)$ are nonvanishing on $S$, the
$S$-polynomial $S(f, g)$ of $f$ and $g$ is
$$S(f, g)=\frac{x^{\gamma}}{LT(f)}f-\frac{x^{\gamma}}{LT(g)}g$$
\end{definition}

\begin{definition}
Let $S$ be a topological space. Let $G=(f_1, ..., f_r)$ where each
$f_i\in \mathscr{C}[x_1, ..., x_n](S)$ and $LT(f_i)$ is nonvanishing
on $S$. For any $f\in \mathscr{C}[x_1, ..., x_n](S)$, we write
$\overline{f}^F$ for the remainder on division of $f$ by the ordered
$r$-tuple $F$ (see \cite[Definition 2.6.3]{CLS}).
\end{definition}

\begin{definition}
Let $S$ be a topological space. Let $f_1, ..., f_k\in
\mathscr{C}[x_1, ..., x_n](S)$ where all $LT(f_i)'s$ are
nonvanishing on $S$. The set $\{f_1, ..., f_k\}$ is a Buchbergerable
set if the following recursive procedure return `TRUE'.

REPEAT\\
        $G':=G$ \\
 \ \ \             FOR \mbox{ each pair }  $\{p, q\}, p\neq q \mbox{ in } G'$
 \ \ \             DO \\
 \ \ \             $g:=\overline{S(p, q)}^{G'}$ \\
 \ \ \             IF $g\neq 0$ AND $LT(g)$  nonvanishing THEN $G:=G\cup \{g\}$\\
 \ \ \             IF $g\neq 0$ AND $LT(g)$  vanishing  THEN RETURN FALSE AND STOP\\

UNTIL $G=G'$ RETURN TRUE AND STOP\\
\end{definition}

\begin{definition}
We say that a finite generating set $\{g_1, ..., g_k\}$ of an ideal
$I\subseteq \mathscr{C}[x_1, ..., x_n](S)$ is a Groebner basis of
$I$ if
\begin{enumerate}
\item
the ideal generated by the leading terms $<LT(g_1), ...,
LT(g_k)>$ of the generating set is same as the ideal generated
by the leading terms $<LT(I)>$ of the elements of the ideals $I$
over $S$;

\item
the leading terms of $g_i's$ are nonvanishing on $S$.
\end{enumerate}

An ideal with a Groebner basis is called a Groebner ideal.
\end{definition}

Throughout this paper, we fix a monomial ordering of $\C[x_1, ...,
x_n]$.

\begin{lemma}
Let $(S, \mathscr{C})$ be a topological space. Let $h\in \CO[x_1,
..., x_n](S)$ and $I=<g_1, ...., g_k>$ be an ideal of $\CO[x_1, ...,
x_n](S)$ generated by $G=\{g_1, ..., g_k\}$ such that the leading
terms of all $g_i's$ are nonvanishing over $S$.

\begin{enumerate}
\item (Continuous division algorithm)  We have
$h=\sum^k_{i=1}a_ig_i+r$ where all $a_i, r\in \CO[x_1, ...,
x_n](S)$, and either $r=0$ or $r$ is a linear combination, with
coefficients in $\mathscr{C}(S)$, of monomials, none of which is
divisible by any of $LT(g_1), ..., LT(g_k)$. (see \cite[Theorem
2.3.3]{CLS}).

\item
(Buchberger's criterion) The generating set $G$ is a Groebner
basis of $I$ if and only if the remainder on division of $S(g_i,
g_j)$ by $G$ is zero for any $i\neq j$.

\item (Continuous Buchberger's algorithm)
If $G$ is Buchbergerable, a Groebner basis for $I$ can be
constructed from the set $G=\{g_1, ..., g_k\}$ by Buchberger's
algorithm (see \cite[Theorem 2.7.2]{CLS}).

\item Let $\{h_1, ..., h_l\}$ be a Groebner basis of $I$. Then
$h=\sum_{i=1}^la_ih_i+r$ where all $a_i, r\in \CO[x_1, ...,
x_n](S)$ and $h\in I$ if and only if $r=0$.
\end{enumerate} \label{algorithms}
\end{lemma}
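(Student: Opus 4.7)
The plan is to transport the arguments of \cite[\S 2.3, 2.6, 2.7]{CLS} to the continuous setting. The crucial observation is that the standing hypothesis ``$LT(g_i)$ nonvanishing on $S$'' means $LC(g_i)\in \CO(S)$ is a unit, so $1/LC(g_i)\in \CO(S)$ is again continuous; consequently, whenever the classical division or Buchberger algorithm executes a step of the form $LT(h')/LT(g_i)$, the quotient is a well-defined element of $\CO[x_1,\ldots,x_n](S)$, and continuity of every intermediate quantity is automatic.

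For part (1), I would run the pseudocode of \cite[Theorem 2.3.3]{CLS} verbatim on input $h$ with divisor tuple $G=(g_1,\ldots,g_k)$: at each step one locates the first $g_i$ whose leading monomial divides the current leading monomial of the dividend, and subtracts the appropriate $g_i$-multiple. Each subtraction strictly decreases the multidegree of the running dividend, so termination is immediate because the monomial ordering is a well-ordering. The outputs $a_i$ and $r$ remain in $\CO[x_1,\ldots,x_n](S)$ precisely because the multiplier $LC(h')/LC(g_i)$ is continuous in $s$, and $r$ satisfies the stated monomial condition by construction. Part (2) then follows from (1) by the standard CLS proof of Buchberger's criterion: the forward direction is immediate from the definition of the remainder; for the converse, express $f \in I$ as $\sum c_i g_i$ and use the hypothesis on $S$-polynomial remainders to iteratively replace pairs of cancelling leading terms, inducting on the multidegree. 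All coefficient manipulations stay within $\CO[x_1,\ldots,x_n](S)$ because they involve only the units $1/LC(g_i)$.

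For part (3), termination of the Buchbergerable loop is a Noetherian argument: each new element $g$ added to $G$ has nonvanishing leading term and strictly enlarges the monomial ideal $<LM(G)>$ inside $\C[x_1,\ldots,x_n]$, and an ascending chain of monomial ideals must stabilize. The Buchbergerable condition is precisely what guarantees that the standing nonvanishing-leading-term hypothesis is preserved at every iteration. For part (4), applying (1) to a Groebner basis $(h_1,\ldots,h_l)$ produces the asserted decomposition. The implication $r=0 \Rightarrow h \in I$ is trivial. Conversely, if $h \in I$ and $r \neq 0$, then $r = h - \sum a_i h_i \in I$, so $LM(r)$ lies in the monomial ideal $<LM(h_1),\ldots,LM(h_l)>$ and hence is divisible by some $LM(h_i)$, contradicting the monomial property of $r$ given in (1).

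I expect the main obstacle to be reconciling the definition of Groebner basis, which compares $<LT(I)>$ and $<LT(g_1),\ldots,LT(g_k)>$ as ideals over $\CO(S)$, with the monomial-ideal statement in $\C[x_1,\ldots,x_n]$ needed for the contradiction in part (4). The saving grace is once more that each $LC(h_i)$ is a unit in $\CO(S)$: divisibility of leading monomials in $\C[x_1,\ldots,x_n]$ lifts cleanly to an honest ideal membership $LT(r) \in (LT(h_i))$ in the continuous polynomial ring. Beyond this, all steps are routine adaptations of the classical proofs, their entire continuity content being a repeated appeal to the same unit property.
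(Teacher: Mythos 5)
Your proposal is correct and follows essentially the same route as the paper, whose own proof is a one-line sketch resting on exactly the observation you make central: the nonvanishing hypothesis makes each $LC(g_i)$ a unit in $\mathscr{C}(S)$, so the classical division algorithm, Buchberger criterion, and Buchberger algorithm of \cite{CLS} run verbatim with all intermediate data remaining continuous. Your expansion of part (4) via the monomial-divisibility contradiction is the standard argument the paper implicitly invokes.
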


\begin{proof}
(1) Since the leading terms of all $g_i's$ are nonvanishing, they
can be inverted, hence we can do the division algorithm as usual.
(2) The proof is similar to the proof of \cite[Theorem 2.6.6]{CLS}.
(3) The definition of a Buchberger set is to make the proof of
Buchberger algorithm works. (4) The reason that $h$ has such
expression follows from division algorithm and the rest follows from
properties of a Groebner basis.
\end{proof}

\begin{example}
Let $S=[0, 1]$, $f(x, y)=x^2+sy^2$, $g(x, y)=x+y$. We use the
lexicographic order such that $x^ay^b>x^cy^d$ if $a>c$ or $a=c$ and
$b>d$. Let $G=\{f, g\}$. Then
$$S(f, g)=-xy+sy^2, h(x, y)=\overline{S(f, g)}^{G}=(s+1)y^2$$
and the procedure ends. So $\{f, g, h\}$ is a Groebner basis for the
ideal generated by $f, g$.
\end{example}

\begin{corollary}
Let $S$ be a topological space. If $\{g_1, ..., g_k\}$ is a Groebner
basis of an ideal $I\subseteq \mathscr{C}[x_1, ..., x_n](S)$, then
$<LT(g_1)_s, ..., LT(g_k)_s>=<LT(I|_s)>$ which means that $\{g_{1,
s}, ..., g_{k, s}\}$ is a Groebner basis for $I|_s$ for each $s\in
S$.
\end{corollary}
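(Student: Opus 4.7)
The plan is to verify directly that $\langle LT(g_1)_s, \ldots, LT(g_k)_s\rangle = \langle LT(I|_s)\rangle$ as ideals in $\C[x_1, \ldots, x_n]$. Since $LT(g_i)$ is nonvanishing on $S$, $LC(g_i)(s) \neq 0$, so $LT(g_i)_s = LC(g_i)(s)\, x^{multideg(g_i)} = LT(g_{i,s})$ and $LM(g_{i,s}) = LM(g_i)$. The inclusion $\langle LT(g_1)_s, \ldots, LT(g_k)_s\rangle \subseteq \langle LT(I|_s)\rangle$ is then immediate from $g_{i,s} \in I|_s$. The real content is the reverse inclusion: for every nonzero $h \in I|_s$, the leading monomial $LM(h)$ must be divisible by some $LM(g_i)$.

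The key step I will use is a descent that upgrades an arbitrary lift $f \in I$ of $h$ to one whose leading monomial over $S$ is exactly $LM(h)$. Pick any $f^{(0)} \in I$ with $f^{(0)}_s = h$, which exists by the definition of $I|_s$. Let $\alpha := multideg(f^{(0)})$ and $\beta := multideg(h)$; then $\alpha \geq \beta$, and whenever $\alpha > \beta$ the coefficient of $x^\alpha$ in $f^{(0)}$ must vanish at $s$, i.e.\ $LC(f^{(0)})(s) = 0$. Because $\{g_1, \ldots, g_k\}$ is a Groebner basis of $I$ over $S$, $LT(f^{(0)}) \in \langle LT(g_1),\ldots,LT(g_k)\rangle$, so $\alpha$ is divisible by $LM(g_i)$ for some $i$. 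Then
$$f^{(1)} := f^{(0)} - \frac{LC(f^{(0)})}{LC(g_i)}\, x^{\alpha - multideg(g_i)}\, g_i$$
lies in $I$ (using that $1/LC(g_i) \in \mathscr{C}(S)$ by nonvanishing of $LT(g_i)$), restricts to $h$ at $s$ (since $LC(f^{(0)})(s) = 0$), and has strictly smaller multidegree than $f^{(0)}$. Iterating and invoking the well-ordering of the monomial order, the process terminates at some $f^{(N)} \in I$ with $f^{(N)}_s = h$ and $multideg(f^{(N)}) = \beta$. A final application of the Groebner-basis property of $G$ to $f^{(N)}$ then shows $\beta = LM(h)$ is divisible by some $LM(g_i) = LM(g_{i,s})$, which yields the desired inclusion.

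The main obstacle is exactly the one this descent is designed to overcome: restriction to a fiber can lower the leading monomial whenever its coefficient vanishes at $s$, so the Groebner-basis property over $S$ does not transfer to the fiber by any purely formal manipulation. The descent circumvents this by using the $S$-Groebner basis to cancel such offending high-degree terms one at a time without disturbing the fiber, eventually producing a lift whose leading monomial over $S$ is forced to match that of $h$; every other ingredient of the argument (identifying $LM(g_{i,s})$ with $LM(g_i)$, the easy inclusion, and the passage from generators of leading monomials to generators of the leading-term ideal) is routine.
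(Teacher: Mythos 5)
Your proof is correct, but it takes a genuinely different route from the paper's. The paper argues through Buchberger's criterion: since $G$ is a Groebner basis over $S$, each $S$-polynomial reduces to zero, giving $S(g_i,g_j)=\sum_l h_l g_l$; restricting to the fibre yields $S(g_{i,s},g_{j,s})=\sum_l h_{l,s}g_{l,s}$, and the classical criterion \cite[Theorem 2.6.6]{CLS} is then invoked to conclude that $\{g_{1,s},\dots,g_{k,s}\}$ is a Groebner basis of $I|_s$. You instead verify the defining condition $\langle LT(I|_s)\rangle=\langle LT(g_{1,s}),\dots,LT(g_{k,s})\rangle$ directly by a descent on lifts: a fibre element can fail to have its leading monomial in $\langle LM(g_1),\dots,LM(g_k)\rangle$ only via lifts of strictly larger multidegree, and your reduction step (which uses that the $LC(g_i)$ are nonvanishing, hence units in $\mathscr{C}(S)$) kills the excess leading term without disturbing the fibre. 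What your route buys is robustness: in the paper's argument, restriction to $s$ can lower the multidegree of $S(g_i,g_j)$ itself (its leading coefficient need not be nonvanishing), so the restricted identity is a priori only an ideal-membership statement rather than a reduction to zero, and the cited form of Buchberger's criterion needs more than ideal membership; your descent sidesteps this issue entirely, at the cost of being longer and not reusing the continuous Buchberger machinery. Two small points you should make explicit: (i) the inference ``$LT(f^{(j)})\in\langle LT(g_1),\dots,LT(g_k)\rangle$ implies $LM(g_i)\mid LM(f^{(j)})$ for some $i$'' requires the monomial-ideal argument over the coefficient ring $\mathscr{C}(S)$ (compare coefficients of $x^{multideg(f^{(j)})}$, using that $LC(f^{(j)})$ is not identically zero and that the $LC(g_i)$ are units); (ii) the conclusion also needs $I|_s=\langle g_{1,s},\dots,g_{k,s}\rangle$, which holds because the $g_i$ generate $I$ and constants are continuous. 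Both are routine to supply.
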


\begin{proof}
By (2) of the Lemma above, for any $i\neq j$, we have $S(g_i,
g_j)=\sum_{l=1}^kh_lg_l$ for some $h_l\in \mathscr{C}[x_1, ...,
x_n](S)$. Then for $s\in S$, $S(g_{i, s}, g_{j, s})=S(g_i,
g_j)|_s=\sum_{l=1}^kh_{l, s}g_{l, s}$. Since $I|_s=<g_{1, s}, ...,
g_{k, s}>$, by \cite[Theorem 2.6.6]{CLS}, $\{g_{1, s}, ..., g_{k,
s}\}$ is a Groebner basis of $I|_s$.
\end{proof}

\begin{proposition}
Suppose that $S$ is a path-connected topological space and
$I\subseteq \mathscr{C}[x_1, ..., x_n](S)$ is a Groebner ideal. Then
the Hilbert polynomial $HP_{I|_s}$ is same for $s\in S$. In
particular, $\mbox{dim } V(I|_s)=\mbox{dim } V(I|_{s'})$ for any $s,
s'\in S$.
\end{proposition}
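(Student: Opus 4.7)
The plan is to reduce the claim to the classical fact that the Hilbert polynomial of an ideal in $\C[x_1,\dots,x_n]$ is determined by its initial (leading term) monomial ideal, and then to observe that the nonvanishing-leading-coefficient hypothesis built into the definition of a Groebner ideal makes that initial ideal \emph{literally the same} monomial ideal for every fibre, so no continuity argument across $S$ is actually required.

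Concretely, I would fix a Groebner basis $G=\{g_1,\dots,g_k\}$ of $I$ and write $\alpha_i = \mathrm{multideg}(g_i)$, so that $LT(g_i) = LC(g_i)\,x^{\alpha_i}$ with $LC(g_i)\in\mathscr{C}(S)$ nonvanishing on $S$ by the definition of a Groebner basis. The corollary immediately preceding the proposition shows that for every $s\in S$, the specialization $\{g_{1,s},\dots,g_{k,s}\}$ is a Groebner basis of $I|_s$. Consequently
\[
\langle LT(I|_s)\rangle \;=\; \bigl\langle LC(g_1)(s)\,x^{\alpha_1},\,\dots,\,LC(g_k)(s)\,x^{\alpha_k}\bigr\rangle,
\]
and because each scalar $LC(g_i)(s)\in\C^{\times}$ is a unit, this is just the monomial ideal $M:=\langle x^{\alpha_1},\dots,x^{\alpha_k}\rangle\subseteq\C[x_1,\dots,x_n]$, which is independent of $s$. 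Now invoking the standard theorem (Macaulay) that $\C[x_1,\dots,x_n]/J$ and $\C[x_1,\dots,x_n]/\langle LT(J)\rangle$ have the same Hilbert function for any ideal $J$, we conclude $HP_{I|_s}=HP_M$ for every $s\in S$, proving the first assertion. The dimension statement then follows at once, since $\dim V(J)$ equals the degree of $HP_J$ as a polynomial in its argument.

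The ``main obstacle'' is really only careful bookkeeping: one must keep the constant multi-index $\alpha_i$ separate from the varying but nonvanishing continuous scalar $LC(g_i)(s)$, and invoke the correct form of Macaulay's theorem relating Hilbert polynomials to initial ideals. It is worth noting that path-connectedness of $S$ is not actually used in the argument; the initial ideal is identically equal to the single monomial ideal $M$ on every fibre, so the conclusion holds for an arbitrary topological space $S$ under the Groebner hypothesis.
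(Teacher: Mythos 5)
Your proof is correct and follows essentially the same route as the paper's: both reduce to the observation that $\langle LT(I|_s)\rangle$ is, up to the unit scalars $LC(g_i)(s)$, the fixed monomial ideal $\langle x^{\alpha_1},\dots,x^{\alpha_k}\rangle$, and then invoke the standard result (Macaulay; \cite[Proposition 9.3.4]{CLS}) that an ideal and its initial ideal have the same Hilbert function. Your side remark is also correct: since the definition of a Groebner basis already requires $LC(g_i)$ to be nonvanishing on \emph{all} of $S$, the leading monomial of $g_{i,s}$ is the constant $x^{\alpha_i}$ at every fibre, so the path $\gamma$ used in the paper to compare $LM(f_{i,s})$ with $LM(f_{i,s'})$ is redundant and the hypothesis of path-connectedness can be dropped.
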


\begin{proof}
Let $f_1, ..., f_k$ be a Groebner basis for $I$. Let $\gamma:[0, 1]
\rightarrow S$ be a path such that $\gamma(0)=s$, $\gamma(1)=s'$.
Then $LC(f_{i, \gamma(t)})$ is nonvanishing for any $t\in [0, 1]$.
Thus $LM(f_{i, s})=LM(f_{i, s'})$ and therefore $LT(f_{i,
s'})=\frac{LC(f_{i, s'})}{LC(f_{i, s})}LT(f_{i, s})$ where
$\frac{LC(f_{i, s'})}{LC(f_{i, s})}\in \mathscr{C}[x_1, ...,
x_n](S)$. So $<LT(I|_s)>=<LT(f_{1, s}), ..., LT(f_{k, s})>=<LT(f_{1,
s'}), ..., LT(f_{k, s'})>=<LT(I|_{s'})>$. The result follows from
\cite[Proposition 9.3.4]{CLS} which proves that the monomial ideal
$<LT(I|_s)>$ has the same Hilbert polynomial as $I|_s$.
\end{proof}

\subsection{Almost Groebner ideals} For properties about real
algebraic sets, we refer the reader to \cite{BCR}.

\begin{definition}(Condensed spaces)
Let $S\subseteq \R^m$ be a topological subspace. A polynomial
function on $S$ is a continuous function $f:S \rightarrow \C$ such
that $f(x_1, ..., x_m)=\sum^n_{i=0, |\alpha|=i}c_{\alpha}x^{\alpha}$
for some $n\in \N$, $c_{\alpha}\in \C$. Let $\mathscr{P}(S)$ be the
ring of all polynomials on $S$. A real algebraic subset of $S$ is
the zero loci of finitely many polynomials on $S$. The space $S$ is
said to be condensed if for any real algebraic subset $V\subset S$,
$V\neq S$, $V$ is nowhere dense under the Euclidean topology of $S$.
Let
$$\mathscr{R}(S)=\{\frac{f}{g}|f, g\in \mathscr{P}(S), 0\notin
g(S)\}$$ An element of $\mathscr{R}(S)$ is called a rational function on $S$ and
an element of $\mathscr{R}(S)[x_1, ..., x_n]$ is called a condensed
element of $S$.
\end{definition}

\begin{remark}
The topology of real algebraic sets are much more complicated than
complex algebraic sets. Easy to see that all spheres $S^n$,
connected intervals on the real line, unit disks are all condensed.
See \cite[Example 3.1.2]{BCR} for real algebraic varieties that are
not condensed.
\end{remark}

\begin{definition}(Measure topology)
Suppose that $S$ is a condensed space. A closed set of $S$ in the
measure topology is defined to be a set of the form
$$\cap^{\infty}_{i=1}\cup^{\infty}_{j=1}V_{ij}$$ where $V_{ij}$ are
some real algebraic subsets of $S$.
\end{definition}
It is easy to check that the finite union and countably intersection
of sets of this form are again a set of this form.

\begin{definition}
Let $S$ be a condensed space. Let $\Sigma$ be the collection of all
closed sets of $S$ under the measure topology except the whole $S$,
then $(S, \Sigma)$ is a sigma space. We will always consider a
condensed space under measure topology, and as a sigma space in this
way.
\end{definition}

\begin{proposition}
If $S$ is a condensed space, then any open set $U\subseteq S$ is
condensed.
\end{proposition}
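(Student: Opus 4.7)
The plan is to reduce the condensed property of an open subset $U \subseteq S$ directly to the condensed property of $S$, by extending the polynomials defining a proper algebraic subset of $U$ to the ambient $S \subseteq \R^m$ and exploiting the openness of $U$ in $S$.

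First I would take an arbitrary proper real algebraic subset $W \subsetneq U$ and write $W = \{x \in U : f_1(x) = \cdots = f_k(x) = 0\}$, where each $f_i \in \mathscr{P}(U)$ is the restriction to $U$ of a polynomial expression $\sum c_\alpha x^\alpha$ in the ambient coordinates $x_1, \ldots, x_m$. The same polynomial expressions then define a real algebraic subset $V := \{x \in S : f_1(x) = \cdots = f_k(x) = 0\}$ of $S$, and we have the identity $V \cap U = W$.

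Next I would verify that $V \neq S$: if $V = S$ then every $f_i$ vanishes identically on $S$, hence a fortiori on $U$, which forces $W = U$, contradicting $W \subsetneq U$. Since $S$ is condensed, the proper algebraic subset $V$ is therefore nowhere dense in $S$. In particular, being a zero locus, $V$ is closed in $S$ and has empty Euclidean interior in $S$.

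The final step is to transfer nowhere-density from $V \subset S$ to $W \subset U$. Because $U$ is open in $S$, the closure of $W$ inside $U$ is $\overline{W}^S \cap U \subseteq V \cap U$. If $W$ had nonempty interior in $U$, there would be a set $O$ open in $U$ with $O \subseteq V \cap U$; but then $O$ is also open in $S$ (since $U$ is open in $S$), contradicting that $V$ has empty interior in $S$. Hence $W$ is nowhere dense in $U$, showing $U$ is condensed. The only potential subtlety, and the one I would flag for the reader, is precisely this passage between the two subspace topologies, which is what makes openness of $U$ in $S$ (as opposed to merely being a subspace) essential.
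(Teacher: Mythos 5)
Your proof is correct and follows essentially the same route as the paper's: extend the defining polynomials from $U$ to $S$, observe the extended zero locus is a proper algebraic subset of $S$, and use the openness of $U$ in $S$ to transfer nowhere-density back down. In fact you are slightly more careful than the paper at the one point that needs care, namely verifying that the extended locus $V$ is not all of $S$ so that the condensed hypothesis on $S$ actually applies.
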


\begin{proof}
 Let
$V\subseteq U$ be a real algebraic subset of $U$ which is the zero
loci of some polynomials $f_1, ..., f_k$ on $U$. We may consider
$f_1, ..., f_k$ as polynomials on $S$ and let $\widetilde{V}$ be the
zero loci of them in $S$. If $V$ is not nowhere dense in $U$, then
there exists an open set $W\subseteq U$ such that $W\cap V$ is dense
in $W$. Since $W$ is also an open subset of $S$, and $W\cap
\widetilde{V}\supset W\cap V$ is dense in $W$, $S$ is not condensed
which is a contradiction.
\end{proof}

\begin{definition}(Condensed ideals)
Suppose that $S\subseteq \R^N$ is a condensed space. An ideal
$I\subseteq \mathscr{C}[x_1, ..., x_n](S)$ is said to be condensed
if $I$ is generated by some elements of $\mathscr{R}[x_1, ...,
x_n](S)$. The presheaf induced by a condensed ideal is called a
condensed presheaf.
\end{definition}

\begin{corollary}(Finite generation of condensed ideals)
If $I\subseteq \mathscr{C}[x_1, ..., x_n](S)$ is a condensed ideal
where $S$ is a condensed space, then $I$ is finitely
generated.\label{finite generation condensed ideals}
\end{corollary}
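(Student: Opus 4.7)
The plan is to reduce finite generation of $I$ in the large ring $\mathscr{C}[x_1,\ldots,x_n](S)$ to finite generation in the much smaller subring $\mathscr{R}(S)[x_1,\ldots,x_n]$, where the classical Hilbert basis theorem applies. By the definition of a condensed ideal, $I$ admits some generating set $\{f_\alpha\}_{\alpha\in A}\subseteq \mathscr{R}(S)[x_1,\ldots,x_n]$. Let $I'\subseteq \mathscr{R}(S)[x_1,\ldots,x_n]$ be the ideal generated by the same $f_\alpha$'s inside that subring. If $\mathscr{R}(S)[x_1,\ldots,x_n]$ is Noetherian, then $I'=(g_1,\ldots,g_k)$ for some $g_i\in \mathscr{R}(S)[x_1,\ldots,x_n]$, and a short diagram-chase will show that the same $g_i$'s generate $I$ over $\mathscr{C}[x_1,\ldots,x_n](S)$.

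For the Noetherian step, first note that since $S\subseteq \R^m$, the ring $\mathscr{P}(S)$ of polynomial functions on $S$ is a quotient of $\C[y_1,\ldots,y_m]$ by the ideal of complex polynomials vanishing identically on $S$. Hilbert's basis theorem applied to $\C[y_1,\ldots,y_m]$ makes $\mathscr{P}(S)$ Noetherian. Next, $\mathscr{R}(S)$ is by construction the localization of $\mathscr{P}(S)$ at the multiplicative set $T=\{g\in \mathscr{P}(S)\mid 0\notin g(S)\}$ (which does not contain $0$ as long as $S\neq\varnothing$, and which is closed under multiplication since the product of nowhere-vanishing functions is again nowhere-vanishing). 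Localizations of Noetherian rings are Noetherian, so $\mathscr{R}(S)$ is Noetherian, and then $\mathscr{R}(S)[x_1,\ldots,x_n]$ is Noetherian by one further application of the Hilbert basis theorem.

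Finally, I verify that the finite list $g_1,\ldots,g_k$ generating $I'$ also generates $I$. Each $g_i\in I'$ is an $\mathscr{R}(S)[x_1,\ldots,x_n]$-combination of the $f_\alpha$'s, hence lies in $I\subseteq \mathscr{C}[x_1,\ldots,x_n](S)$. Conversely, each $f_\alpha\in I'$ can be written as $\sum_i p_{\alpha,i}g_i$ with $p_{\alpha,i}\in \mathscr{R}(S)[x_1,\ldots,x_n]\subseteq \mathscr{C}[x_1,\ldots,x_n](S)$, so $f_\alpha$ lies in the ideal of $\mathscr{C}[x_1,\ldots,x_n](S)$ generated by the $g_i$'s. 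This yields $I=(g_1,\ldots,g_k)$ in $\mathscr{C}[x_1,\ldots,x_n](S)$.

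I do not anticipate a real obstacle: the statement is essentially a formal consequence of Hilbert's basis theorem applied twice, the condensed hypothesis on $S$ being needed only to guarantee that $\mathscr{R}(S)$ and the multiplicative set $T$ behave as intended (in particular that $\mathscr{R}(S)$ genuinely embeds into $\mathscr{C}(S)$ via $f/g\mapsto f/g$ because $g$ is nowhere zero on $S$). The only point to be a bit careful about is to keep straight the two ambient rings and to confirm that $\mathscr{R}(S)[x_1,\ldots,x_n]$ really is a subring of $\mathscr{C}[x_1,\ldots,x_n](S)$, which is immediate from the definitions.
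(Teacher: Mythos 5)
Your proposal is correct and follows essentially the same route as the paper: reduce to the subring of condensed elements, observe that $\mathscr{P}(S)$ (hence $\mathscr{R}(S)$ and its polynomial extension) is Noetherian via Hilbert's basis theorem, extract a finite generating set there, and note that it still generates $I$ in $\mathscr{C}[x_1,\ldots,x_n](S)$. Your version is in fact slightly more careful than the paper's, which works with the set of all condensed elements of $I$ and loosely calls it an ideal of $\mathscr{P}(S)[x_1,\ldots,x_n]$ rather than of $\mathscr{R}(S)[x_1,\ldots,x_n]$.
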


\begin{proof}
Let $\Sigma$ be the collection of all condensed elements of $I$.
Then $\Sigma$ is an ideal of $\mathscr{P}(S)[x_1, ..., x_n]$. Since
$\mathscr{P}(S)$ is Noetherian, $\mathscr{P}(S)[x_1, ..., x_n]$ is
Noetherian, hence $\Sigma$ is finitely generated. Therefore $I$ is
finitely generated.
\end{proof}

\begin{definition}(Condensed basis)
Let $I\subseteq\mathscr{C}[x_1, ..., x_n](S)$ be a condensed ideal
where $S$ is a condensed space. If $I$ is generated by some
condensed elements $f_1, f_2, ..., f_m$, then $\{f_1, f_2, ...,
f_m\}$ is called a condensed basis of $I$.
\end{definition}

\begin{definition}(Sigma spaces)
Let $S$ be a topological space and $\Sigma$ be a collection of
closed subsets of $S$. If any countably union of elements in
$\Sigma$ is again in $\Sigma$, then the pair $(S, \Sigma)$ is called
a sigma space and elements of $\Sigma$ are called sets of measure
zero.
\end{definition}

\begin{definition}(Almost sheaves)
Let $S$ be a sigma space and $\mathscr{F}$ be a presheaf on $S$. An
element $\sigma\in \mathscr{F}(U)$ is said to be almost zero,
denoted by $a=_a 0$, if there is a set $A\subseteq S$ of measure
zero such that $\sigma|_{U-A}=0$. Two elements $\sigma_1\in
\mathscr{F}(U), \sigma_2\in \mathscr{F}(V)$ are almost equal,
denoted by $\sigma_1=_a \sigma_2$, if there is a set $A$ of measure
zero such that $\sigma_1|_{U-A}=\sigma_2|_{V-A}$. For $\alpha\in
\mathscr{F}(V)$, we write $\alpha \in_a \mathscr{F}(U)$ if there is
a set $A$ of measure zero such that $\alpha|_{V-A}\in
\mathscr{F}(U-A)$.

We say that $\mathscr{F}$ is an almost sheaf if it satisfies the
following two properties:
\begin{enumerate}
\item (Almost Uniqueness) For $\sigma\in \mathscr{F}(U)$, if there is an open covering $U=\cup_i U_i$
such that the restriction $\sigma|_{U_i}=_a 0$ for any $i$, then
$\sigma=_a 0$;

\item (Almost extension property) for any open set $U$ of $S$ and for any open covering
$\{U_i\}$ of $U$, if $\sigma_i\in \mathscr{F}(U_i)$ and
$\sigma_i|_{U_i\cap U_j}=_a\sigma_j|_{U_j\cap U_i}$ for any $i,
j$, then there is $\sigma \in_a \mathscr{F}(U)$ such that
$\sigma|_{U_i}=_a \sigma_i$ for any $i$.
\end{enumerate}
\end{definition}

\begin{definition}
Let $S$ be a condensed space with the measure topology and
$I\subseteq \mathscr{C}[x_1, ..., x_n](S)$ be a finitely generated
ideal.
\begin{enumerate}
\item We say that $I$ is almost Groebner if there exist a subset $A\subseteq S$
of measure zero such that $I(S-A)$ is a Groebner ideal of
$\mathscr{C}[x_1, ..., x_n](S-A)$. The presheaf induced by $I$
is called an almost Groebner presheaf.

\item We say that $I$ is almost Buchbergerable if there exist a subset
$A \subseteq S$ of measure zero such that there exist a
generator set $\{f_1, ..., f_k\}$ of $I(S-A)$ which is a
Buchbergerable set. The presheaf induced by $I$ is called an
almost Buchbergerable presheaf.
\end{enumerate}
\end{definition}

Immediately from the definition and by Buchberger's algorithm, we
have the following observation.
\begin{proposition}
If an ideal is almost Buchbergerable, then it is almost Groebner.
\end{proposition}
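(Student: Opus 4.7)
The plan is to observe that the almost Groebner condition only differs from the almost Buchbergerable condition in what we demand of the generating set on $S-A$, so the statement reduces (after throwing away the same measure-zero set $A$) to the corresponding assertion for genuine Groebner versus Buchbergerable ideals on $S-A$. That reduction in turn is essentially immediate from parts (2) and (3) of Lemma \ref{algorithms}.

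Concretely, I would unpack the hypothesis: by definition there exist a measure-zero set $A\subseteq S$ and a generating set $G=\{f_1,\dots,f_k\}$ of $I(S-A)$ whose leading terms are nonvanishing on $S-A$ and for which the Buchbergerable procedure returns \textbf{TRUE}. Working entirely over the condensed space $S-A$, I would then run the Continuous Buchberger algorithm from Lemma \ref{algorithms}(3) on $G$. Because $G$ is Buchbergerable, the procedure terminates without ever hitting a remainder with vanishing leading term, and outputs a finite enlargement $G'=G\cup\{h_1,\dots,h_r\}\subseteq I(S-A)$ with every $LT$ nonvanishing on $S-A$ and with the stabilization property that $\overline{S(p,q)}^{G'}=0$ for every pair $p,q\in G'$.

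Next I would verify that $G'$ is a Groebner basis of $I(S-A)$. The new elements $h_j$ were produced as remainders of $S$-polynomials of elements of $G\subseteq I(S-A)$, so $G'\subseteq I(S-A)$ and $G'$ generates $I(S-A)$. Since every pair-wise $S$-polynomial reduces to $0$ modulo $G'$, Buchberger's criterion (Lemma \ref{algorithms}(2)) applies and shows that the ideal $\langle LT(g)\mid g\in G'\rangle$ coincides with $\langle LT(I(S-A))\rangle$. Combined with the nonvanishing of each $LT(g_i),LT(h_j)$ on $S-A$, this is exactly the definition of a Groebner basis, so $I(S-A)$ is a Groebner ideal on $S-A$; with the same exceptional set $A$, the ideal $I$ is therefore almost Groebner.

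The only real point that needs care is making sure Buchberger's criterion applies verbatim in the continuous setting over $S-A$, but this is precisely what Lemma \ref{algorithms}(2) asserts, the hypothesis of nonvanishing leading terms being guaranteed by the fact that the Buchbergerable procedure terminated with \textbf{TRUE}. There is no combinatorial obstacle beyond this; the proof is essentially a bookkeeping remark that the Buchbergerable condition is the precise hypothesis ensuring the continuous Buchberger algorithm runs to completion and delivers a Groebner basis.
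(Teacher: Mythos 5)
Your proof is correct and follows exactly the route the paper intends: the paper states the proposition as an immediate consequence of the definitions and the continuous Buchberger algorithm (Lemma \ref{algorithms}(2),(3)), which is precisely what you carry out in detail on $S-A$. No issues.
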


\begin{proposition}
If $I\subseteq \mathscr{C}[x_1, ..., x_n](S)$ is a Groebner ideal
with Groebner basis $G=\{f_1, ..., f_k\}$, then
$G|_{S-A}=\{f_1|_{S-A}, ..., f_k|_{S-A}\}$ is a Groebner basis of
the ideal $I(S-A)\subseteq \mathscr{C}[x_1, ..., x_n](S-A)$ for any
$A\subseteq S$ of measure zero.
\end{proposition}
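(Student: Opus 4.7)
The plan is to verify the two defining conditions of a Groebner basis for the restricted family $G|_{S-A}$ on the restricted ideal $I(S-A)$, using Buchberger's criterion (item (2) of Lemma \ref{algorithms}) as the main engine.

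First, I would check that $G|_{S-A}$ generates $I(S-A)$. By definition $I(S-A)$ is the ideal of $\mathscr{C}[x_1,\ldots,x_n](S-A)$ generated by $I|_{S-A}$; since every element of $I$ has the form $f=\sum a_i f_i$ with $a_i\in\mathscr{C}[x_1,\ldots,x_n](S)$, restriction yields $f|_{S-A}=\sum (a_i|_{S-A})(f_i|_{S-A})$, so $I|_{S-A}$ lies in $\langle G|_{S-A}\rangle$, and the reverse inclusion is automatic. Thus $I(S-A)=\langle f_1|_{S-A},\ldots,f_k|_{S-A}\rangle$. Second, because $LC(f_i)$ is nowhere zero on $S$, it remains nowhere zero on the subspace $S-A$, so $LM(f_i|_{S-A})=LM(f_i)$ and the leading terms $LT(f_i|_{S-A})$ are nonvanishing on $S-A$. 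This supplies both hypotheses needed to even speak of the continuous division algorithm and Buchberger's criterion over $S-A$.

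Now I would run Buchberger's criterion. The $S$-polynomial $S(f_i|_{S-A},f_j|_{S-A})$ is literally the restriction of $S(f_i,f_j)$, since its construction uses only $x^\gamma$, $LT(f_i)$, $LT(f_j)$, together with inversion of $LC(f_i)$, $LC(f_j)$, all of which restrict unchanged. By the Groebner hypothesis on $S$, Buchberger's criterion gives $\overline{S(f_i,f_j)}^{G}=0$, which by Lemma \ref{algorithms}(1) produces $a_l^{(i,j)}\in\mathscr{C}[x_1,\ldots,x_n](S)$ with $S(f_i,f_j)=\sum_l a_l^{(i,j)} f_l$ and with the trivial zero remainder satisfying the no-divisibility condition vacuously. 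Restricting this identity term by term to $S-A$ gives the same expression with restricted coefficients, and again the remainder is zero. Hence $\overline{S(f_i|_{S-A},f_j|_{S-A})}^{G|_{S-A}}=0$ for every pair $i\neq j$, and Buchberger's criterion over $S-A$ certifies that $G|_{S-A}$ is a Groebner basis of $I(S-A)$.

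The one point to watch is that the continuous division algorithm really does commute with restriction; this is where the $S$-versus-$(S-A)$ distinction could in principle cause trouble. But each reduction step has the form $h\mapsto h-(LT(h)/LT(f_l))\,f_l$ for some $l$ with $LT(f_l)$ dividing some term of $h$, and this operation restricts verbatim as soon as $LC(f_l)$ is nonvanishing on $S-A$, which we already verified. Consequently a zero-remainder reduction sequence on $S$ descends to a zero-remainder reduction sequence on $S-A$, finishing the proof.
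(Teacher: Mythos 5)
Your proof is correct and follows essentially the same route as the paper's: observe that the nonvanishing leading coefficients persist on $S-A$, that $S$-polynomials and zero-remainder divisions restrict verbatim, and conclude by Buchberger's criterion. You are somewhat more careful than the paper in explicitly verifying that $G|_{S-A}$ generates $I(S-A)$ and that the division algorithm commutes with restriction, but the argument is the same.
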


\begin{proof}
Let $U=S-A$. By definition, $LC(f_i)$ is nonvanishing on $S$, hence
$LT(f_i)|_U=LT(f_i|_U)$. So for the $S$-polynomial, $S(f_i,
f_j)|_U=S(f_i|_U, f_j|_U)$. By Buchberger's criterion, the remainder
of $S$-polynomial $S(f_i, f_j)$ on division by $G$ is zero, hence
the remainder of $S$-polynomial $S(f_i|_U, f_j|_U)$ on division by
$G|_U$ is zero. Hence by Buchbergerable's criterion, $G|_U$ is a
Groebner basis for the ideal $I(U)$ in $\mathscr{C}[x_1, ...,
x_n](U)$.
\end{proof}

The following simple example shows that we can not make any
extension even for a very perfect function defined in a dense open
set.

\begin{example}
Let $I=<x+s, x-s>\subseteq \mathscr{C}[x](\R)$. Then obviously
$I(\R-\{0\})=\mathscr{C}[x](\R-\{0\})$, and $V(I(\R))=\{(0, 0)\}$.
So $1\in I(\R-\{0\})$ but $1\notin I(\R)$.
\end{example}

\begin{proposition}
\begin{enumerate}
\item A Groebner ideal presheaf is a Groebner ideal sheaf.
\item
An almost Groebner presheaf is an almost Groebner sheaf.
\item
In particular, an almost Buchbergerable presheaf is an almost
Buchbergerable sheaf.
\end{enumerate}
\end{proposition}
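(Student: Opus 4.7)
The strategy is to deduce all three assertions from the continuous division algorithm of Lemma~\ref{algorithms}. Part (1) carries the substantive content; part (2) follows by restricting to the almost-everywhere Groebner domain; part (3) is immediate from (2) together with the preceding proposition linking Buchbergerability to the Groebner condition.

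For (1), fix a Groebner basis $G = \{g_1, \ldots, g_k\}$ of $I$ on $S$. The uniqueness axiom is inherited from the sheaf property of $\mathscr{C}$: a polynomial with continuous coefficients is determined by those coefficients on any open cover. For the extension axiom, I start with sections $\sigma_i \in I(U_i)$ agreeing on pairwise intersections, invoke Lemma~\ref{algorithms}(1) and (4) to obtain canonical expressions
$$\sigma_i \;=\; \sum_{j=1}^k a_{ij}\,g_j|_{U_i}$$
with zero remainder, and then argue that the division algorithm output commutes with restriction. Since each $LC(g_j)$ is nonvanishing on $S$, the algorithm's steps are purely local in $s$, so running it on $\sigma_i|_{U_i \cap U_l}$ produces $a_{ij}|_{U_i \cap U_l}$; combining this with $\sigma_i|_{U_i \cap U_l} = \sigma_l|_{U_i \cap U_l}$ yields $a_{ij}|_{U_i \cap U_l} = a_{lj}|_{U_i \cap U_l}$. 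The continuous coefficient functions of the $a_{ij}$'s then glue, monomial by monomial, via the sheaf property of $\mathscr{C}$, producing $a_j \in \mathscr{C}[x_1,\ldots,x_n](U)$ and the desired $\sigma = \sum_j a_j\,g_j|_U \in I(U)$ with $\sigma|_{U_i} = \sigma_i$.

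For (2), let $A \subseteq S$ be a measure-zero set on which $I(S - A)$ is a Groebner ideal. After intersecting every open with $S - A$ (whose complement is by hypothesis of measure zero), both almost-sheaf axioms reduce to the ordinary sheaf axioms of (1) applied to $I(S-A)$: almost-uniqueness of a section annihilated by a cover becomes actual uniqueness off $A$, and a system of compatible-up-to-measure-zero sections becomes a genuinely compatible system on opens of $S - A$, yielding an extension $\sigma \in_a I(U)$. Part (3) then follows at once from (2) and the preceding proposition that almost Buchbergerable implies almost Groebner; preservation of the Buchbergerability structure under restriction to opens is automatic since the defining loop is pointwise in $s$.

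I expect the main obstacle to be the gluing step in (1). Two delicate points need care: first, the compatibility of the division algorithm's output with restriction, which rests essentially on the globally nonvanishing leading coefficients of the $g_j$'s; second, and more subtly, ensuring that the glued coefficient functions assemble into a genuine polynomial (a \emph{finite} sum of monomials) in $\mathscr{C}[x_1,\ldots,x_n](U)$ rather than a formal series. This amounts to a locally uniform bound on the degrees of the $\sigma_i$'s via the degree estimate $\deg(a_{ij}) \le \deg(\sigma_i) - \deg(g_j)$ from the division algorithm, a tender point given that $\mathscr{C}[x_1,\ldots,x_n]$ itself is only an almost sheaf.
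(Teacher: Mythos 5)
Your argument is correct in substance, but it runs in the opposite order from the paper's, and the comparison is instructive. The paper first glues the compatible sections $f_i$ into a single $f\in\mathscr{C}[x_1,\ldots,x_n](U)$ --- invoking the assertion that, under the measure topology, the polynomial presheaf is itself a sheaf --- and only then performs one global division $f=\sum_j a_jg_j+r$ by a Groebner basis of $I(U)$, concluding that $r|_{U_i}=0$ for each $i$ (because $f|_{U_i}\in I(U_i)$ and the restricted basis is still a Groebner basis of $I(U_i)$) and hence $r=0$, so $f\in I(U)$. You instead divide locally on each $U_i$ and glue the quotient coefficients $a_{ij}$, which obliges you to check that the division algorithm commutes with restriction --- a step the paper's order of operations avoids entirely. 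The ``tender point'' you flag at the end (that the glued coefficients assemble into a \emph{finite} sum of monomials) is genuine, but it is present in both proofs and is exactly where the measure topology earns its keep: two nonempty measure-topology opens of a condensed space intersect in a dense subset (the complements are contained in countable unions of nowhere dense real algebraic sets), so restriction to such an intersection cannot annihilate a nonzero continuous coefficient; consequently all the $\sigma_i$ in a compatible family carry the same finite set of monomials with not-identically-zero coefficients. This single observation is the content of the paper's unproved remark that $\mathscr{C}[x_1,\ldots,x_n]$ is a sheaf in the measure topology, and it simultaneously discharges your other worry, since preservation of leading monomials under restriction is precisely what makes the branching of the division algorithm, and hence its output, compatible with restriction. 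Parts (2) and (3) you handle exactly as the paper does, by deleting a measure-zero set and reducing to (1).
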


\begin{proof}
\begin{enumerate}
\item
Suppose that $I\subseteq \mathscr{C}[x_1, ..., x_n]$ is a
Groebner presheaf induced by a Groebner ideal $I\subseteq
\mathscr{C}[x_1, ..., x_n](S)$. Note that under the measure
topology of $S$, $\mathscr{C}[x_1, ..., x_n]$ is a again a sheaf
on $S$. Let $U$ be an open set of $S$ and $U=\cup_i U_i$ be a
covering of $U$. Suppose that $f_i\in I(U_i)$ such that
$f_i|_{U_i\cap U_j}=f_j|_{U_j\cap U_i}$ for any $i, j$. Then
there is $f \in \mathscr{C}[x_1, ..., x_n](U)$ such that
$f|_{U_i}=f_i$. Let $g_1, ..., g_k$ be a Groebner basis of
$I(U)$. Then $f=\sum^k_{i=1}a_ig_i+r$ for some $a_i, r\in
\mathscr{C}[x_1, ..., x_n](U)$ where $LT(r)$ is not divisible by
any of $LT(g_i)$. Then
$f|_{U_i}=\sum^k_{i=1}a_ig_i|_{U_i}+r|_{U_i}\in I(U_i)$. This
implies that $r|_{U_i}=0$ for all $i$. Thus $r=0$ and hence
$f\in I(S)$. This proves the extension property of $I$. The
uniqueness of $I$ comes directly from the uniqueness of
$\mathscr{C}[x_1, ..., x_n]$.

\item
Let $A\subseteq S$ be a set of measure zero such that $I(S-A)$
is a Groebner ideal of $\mathscr{C}[x_1, ..., x_n](S-A)$. Then
by the result above, $I|_{S-A}$ is a Groebner sheaf and hence
$I$ is an almost Groebner sheaf.

\end{enumerate}
\end{proof}

\begin{proposition}
Let $S$ be a condensed space. Suppose that $I=<f_1, ...,
f_k>\subseteq \mathscr{C}[x_1, ..., x_n](S)$ is a condensed ideal
where $f_1, ..., f_k\in \mathscr{P}(S)$. Then $G=\{f_1, ..., f_k\}$
is Buchbergerable on $S-A$ where $A\subsetneq S$ is a real algebraic
subset. In particular, $I$ is almost Buchbergerable, and $I$ induces
a Buchbergerable almost sheaf.
\end{proposition}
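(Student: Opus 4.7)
The plan is to run Buchberger's algorithm formally in $K[x_1,\dots,x_n]$, where $K=\mathrm{Frac}(\mathscr{P}(S))$ is a genuine field, and then pull the resulting finite computation back to $S$ minus the simultaneous vanishing locus of all denominators and leading numerators that appear.

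First I would observe that $\mathscr{P}(S)$ is an integral domain. If $f,g\in\mathscr{P}(S)$ are both nonzero, then $V(f)$ and $V(g)$ are proper real algebraic subsets of $S$, hence nowhere dense by the condensed hypothesis; since every condensed space of interest in the paper (spheres, intervals, disks) is Baire, $V(f)\cup V(g)\subsetneq S$ and so $fg\neq 0$. Thus $K$ is well defined. Viewing each $f_i$ inside $K[x_1,\dots,x_n]$ via the inclusion $\mathscr{P}(S)\hookrightarrow K$, run the classical Buchberger algorithm: since $K$ is a field, $K[x_1,\dots,x_n]$ is Noetherian, the algorithm terminates in finitely many steps, and it outputs a Groebner basis $\{g_1,\dots,g_l\}\subseteq K[x_1,\dots,x_n]$. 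Every coefficient and leading coefficient that ever appears during the computation is some $p/q$ with $p,q\in\mathscr{P}(S)$ and $q\neq 0$.

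Let $Q\in\mathscr{P}(S)$ be the product of all such denominators $q$ together with the numerators of the leading coefficients of every intermediate remainder produced by the algorithm. Because only finitely many coefficients arise and each is nonzero in $K$, we have $Q\neq 0$ in $\mathscr{P}(S)$, and $A:=V(Q)$ is a proper real algebraic subset of $S$. On $S\setminus A$ every denominator is nonvanishing, so each intermediate element of $K[x_1,\dots,x_n]$ descends to a well-defined element of $\mathscr{C}[x_1,\dots,x_n](S\setminus A)$ with nonvanishing leading coefficient, and the $S$-polynomial and continuous division operations in the two settings agree step by step. Running the Buchbergerable procedure on $G|_{S\setminus A}$ therefore traces the same sequence of operations as the $K$-computation and never encounters a nonzero remainder with vanishing leading term, so the procedure returns TRUE and $G|_{S\setminus A}$ is Buchbergerable. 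Since a real algebraic subset is by definition an element of $\Sigma$, i.e.\ of measure zero, $I$ is almost Buchbergerable, and the sheaf statement is immediate from the preceding proposition asserting that every almost Buchbergerable presheaf is an almost Buchbergerable sheaf.

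The main obstacle is the bookkeeping: one must track every denominator and every leading-coefficient numerator that enters the finite Buchberger computation and verify that their joint product remains a nonzero element of $\mathscr{P}(S)$, so that $A$ is genuinely a proper real algebraic subset rather than all of $S$. A secondary subtlety is the domain property of $\mathscr{P}(S)$, which requires a mild Baire-type assumption on $S$ beyond the literal condensed hypothesis but holds for every space the paper intends to treat.
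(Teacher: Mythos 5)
Your proof is correct and rests on the same underlying idea as the paper's: every point where the Buchberger procedure could break down lies in the zero locus of finitely many nonzero elements of $\mathscr{P}(S)$, hence in a proper real algebraic subset, which is of measure zero by the condensed hypothesis. The difference is organizational. The paper runs the continuous Buchberger procedure directly on $S$, excising at each stage the zero locus $C_{ij}$ of the leading coefficient of the current remainder $f_{ij}=\overline{S(f_i,f_j)}^{G}$, and then simply asserts that the process terminates after finitely many rounds. You instead run the classical algorithm once over the fraction field $K=\mathrm{Frac}(\mathscr{P}(S))$ and excise a single hypersurface $V(Q)$ at the end. Your version buys something real: termination becomes an honest consequence of the Noetherianity of $K[x_1,\dots,x_n]$ (equivalently, Dickson's lemma applied to the leading-term ideals over a field), a point the paper leaves unargued. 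The price is the bookkeeping you flag, namely checking that the fibrewise continuous division on $S\setminus V(Q)$ reproduces the formal division over $K$ step by step, which holds because all the relevant leading coefficients are invertible there. One small correction: your appeal to a Baire-type hypothesis to make $\mathscr{P}(S)$ an integral domain is unnecessary. A finite union of nowhere dense sets is nowhere dense in any topological space, so for nonzero $f,g\in\mathscr{P}(S)$ the condensed hypothesis already gives $V(f)\cup V(g)=V(fg)\subsetneq S$, hence $fg\neq 0$; Baire is only needed for countable unions, which do not arise here.
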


\begin{proof}
Let $G=\{f_1, ..., f_k\}$. The Buchberger's algorithm does not work
if the leading terms vanish somewhere. So the idea of the proof is
to delete the subset of $S$ that leading terms of the
$S$-polynomials vanish and this set is a real algebraic subset of
$S$. Let $f_{12}=\overline{S(f_1, f_2)}^G$ and $C_{12}=$ the zero
locus of the leading coefficient of $f_{12}$. On $S-C_{12}$, let
$G_{12}=G\cup \{f_{12}\}$, and $f_{13}=\overline{S(f_1,
f_3)}^{G_{12}}$. Let $C_{13}$ be the zero locus of the leading
coefficient of $f_{13}$. On $S-(C_{12}\cup C_{13})$, we repeat the
process before. Keep doing this, we get a set $G'=\{f_1, ..., f_k,
f_{ij}, i, j=1, ..., k, i\neq j\}$ and all the leading coefficients
are nonvanishing on $S-\cup_{i\neq j}C_{ij}$. Now repeat the process
for $G=G'$. This process terminates after a finite number of times.
Since $C_{ij}$ are all real algebraic subsets of $S$, we are done.
\end{proof}

\begin{definition}
Suppose that $I, J\subseteq \mathscr{C}[x_1, ..., x_n](S)$ are two
ideals. We say that $I$ is almost contained in $J$, denoted by
$I\subseteq_a J$, if there is a measure zero subset $A\subseteq S$
such that $I(S-A)\subseteq J(S-A)$. We say that $I$ is almost equal
to $J$, denote by $I=_a J$, if $I\subseteq_a J$ and $J\subseteq_a
I$. For $f\in \mathscr{C}[x_1, ..., x_n](S)$, we write $f\in_a I$
and say that $f$ is almost in $I$ if there is a set $A$ of measure
zero such that $f|_{S-A}\in I(S-A)$.
\end{definition}

\begin{definition}
Let $\sum$ be a collection of some ideals of $\mathscr{C}[x_1, ...,
x_n](S)$ where $S$ is a condensed space. If for any chain
$$J_1\subseteq_a J_2 \subseteq_a J_3 \subseteq_a \cdots$$
in $\sum$, there is an $N$ such that $J_{N+k}=_aJ_N$ for all $k\in
\N$, then we say that $\sum$ has the almost ascending chain
property.
\end{definition}

\begin{proposition}
Let $S$ be a condensed space. Let $R=\mathscr{C}[x_1, ..., x_n](S)$,
and $AG(R)$ be the collection of all almost Groebner ideals of $R$.
Then $AG(R)$ has the almost ascending chain property. \label{ASC}
\end{proposition}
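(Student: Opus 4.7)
The plan is to reduce the problem to an ascending chain of monomial ideals in the classical Noetherian ring $\C[x_1, \ldots, x_n]$. Given a chain $J_1 \subseteq_a J_2 \subseteq_a J_3 \subseteq_a \cdots$ in $AG(R)$, for each $i$ I choose measure-zero sets $A_i, B_i \subseteq S$ such that $J_i(S-A_i)$ is a Groebner ideal and $J_i(S-B_i) \subseteq J_{i+1}(S-B_i)$. Using the closure of the sigma space $\Sigma$ under countable unions, the set $A := \bigcup_i (A_i \cup B_i)$ is still of measure zero. Write $U := S-A$. By the earlier proposition that a Groebner basis remains a Groebner basis after restriction off a measure-zero set, each $J_i(U)$ is a genuine Groebner ideal with Groebner basis $G_i = \{g^{(i)}_1, \ldots, g^{(i)}_{k_i}\}$, and the inclusions $J_i(U) \subseteq J_{i+1}(U)$ now hold on the nose.

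Next, I would attach to each $J_i(U)$ the classical monomial ideal $M_i := \langle LM(g^{(i)}_1), \ldots, LM(g^{(i)}_{k_i}) \rangle \subseteq \C[x_1, \ldots, x_n]$. Since each leading coefficient $LC(g^{(i)}_l)$ is invertible in $\mathscr{C}(U)$, the monomials lying in $M_i$ are exactly the monomials appearing as leading monomials of elements of $J_i(U)$. Applying this characterization to $g^{(i)}_l \in J_i(U) \subseteq J_{i+1}(U)$ yields $LM(g^{(i)}_l) \in M_{i+1}$, so $M_i \subseteq M_{i+1}$. Hilbert's basis theorem applied to $\C[x_1, \ldots, x_n]$ then produces an index $N$ with $M_N = M_{N+k}$ for all $k \geq 0$.

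The final step is to upgrade $M_N = M_{N+k}$ to $J_N(U) = J_{N+k}(U)$, which gives $J_N =_a J_{N+k}$. For any $f \in J_{N+k}(U)$, apply the continuous division algorithm (Lemma \ref{algorithms}(1)) with respect to $G_N$ to write $f = \sum_l a_l g^{(N)}_l + r$, where no monomial appearing in $r$ with a nonvanishing coefficient is divisible by any $LM(g^{(N)}_l)$. Suppose for contradiction $r \neq 0$, and in $r = \sum_\beta b_\beta(s) x^\beta$ let $\beta^*$ be the largest multi-index with $b_{\beta^*} \not\equiv 0$. Pick $s \in U$ with $b_{\beta^*}(s) \neq 0$; then $r_s$ is a nonzero fibre polynomial with $LM(r_s) = x^{\beta^*}$. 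Since $r \in J_{N+k}(U)$ and $\{g^{(N+k)}_{j,s}\}$ is a fibrewise Groebner basis of $J_{N+k}(U)|_s$ (by the corollary to Lemma \ref{algorithms}), $x^{\beta^*}$ must be divisible by some $LM(g^{(N+k)}_j)$, so $x^{\beta^*} \in M_{N+k} = M_N$. This contradicts the division-algorithm property of $r$, and thus $r = 0$ and $f \in J_N(U)$.

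The principal obstacle I expect is the bookkeeping of exceptional sets, which must accumulate over infinitely many $i$ and still leave a negligible total (this is precisely what the sigma space structure guarantees), together with the delicate interplay in the last step between the global remainder condition in $\mathscr{C}(U)[x_1, \ldots, x_n]$ and the fibrewise leading-monomial information; everything else reduces to Noetherianity of the classical polynomial ring.
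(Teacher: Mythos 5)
Your proposal is correct and follows essentially the same route as the paper: discard a countable union of measure-zero sets so that every $J_i$ becomes a genuine Groebner ideal and the inclusions hold on the nose, reduce to monomial ideals in $\C[x_1,\ldots,x_n]$ via the leading monomials (the paper invokes Dickson's Lemma on the union $J=\cup_i J_i$, you equivalently stabilize the chain $M_i$ of leading-monomial ideals by Noetherianity), and then transfer the stabilization back with the continuous division algorithm. The only cosmetic difference is that the paper realizes finitely many generators of $LT(J(S-B))$ inside some $J_N$, whereas you derive $J_{N+k}(U)\subseteq J_N(U)$ by a remainder-contradiction; both finish the same way.
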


\begin{proof}
Let
$$J_1\subseteq_a J_2 \subseteq_a J_3 \subseteq_a \cdots$$ be an ascending
chain in $AG(R)$ and $J=\cup_i J_i$. Suppose that $A_i\subset S$ is
a subset of measure zero such that $J_i(S-A_i)$ is a Groebner ideal
of $\mathscr{C}[x_1, ..., x_n](S-A_i)$. Let
$B=\cup^{\infty}_{i=1}A_i$. Then $B$ has measure zero, and we have
$J(S-B)=\cup^{\infty}_{i=1}J_i(S-B)$.

Let $M=\{LT(f)|f\in J(S-B), LC(f)=1\}$. Since each $J_i(S-B)$ is
Groebner,
$$LT(J(S-B))=<M>\subseteq \C[x_1, ..., x_n](S-B)$$

Consider the monomial ideal generated by $M$ in $\C[x_1, ..., x_n]$.
By Dickson's Lemma \cite[Theorem 2.4.5]{CLS}, it is finitely
generated, hence $LT(J(S-B))$ is finitely generated. Let
$LT(J(S-B))=<a_1, ..., a_t>$ and $g_1, ..., g_t\in J(S-B)$ such that
$LT(g_i)=a_i$. For $h\in J(S-B)$, by the division algorithm, we may
write
$$h=\sum^t_{i=1}b_ig_i+r$$ where $r=0$ or each term of $r$ is not divisible
by any $LT(g_i)=a_i$. If $r\neq 0$, this implies that $LT(r)\notin
LT(J(S-B))$. This contradicts to the fact that $r\in J(S-B)$.
Therefore $r=0$ and hence $J(S-B)$ is generated by $g_1, ...., g_t$.
Suppose that $g_i\in J_{n_i}$ and take $N=max\{n_1, ..., n_t\}$.
Then $J=_a J_N$.
\end{proof}

\begin{corollary}
Let $S$ be a condensed space and $C(R)$ be the collection of all
condensed ideals of $R=\mathscr{C}[x_1, ..., x_n](S)$, $AB(R)$ be
the collection of all almost Buchbergerable ideals of $R$. Then
$C(R), AB(R)$ have the almost ascending chain property.
\end{corollary}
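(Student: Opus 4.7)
The plan is to deduce the corollary directly from Proposition \ref{ASC} by showing that both $C(R)$ and $AB(R)$ sit inside $AG(R)$. The inclusion $AB(R)\subseteq AG(R)$ is exactly the content of the proposition stating that almost Buchbergerable ideals are almost Groebner, so nothing further is required there.

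For the inclusion $C(R)\subseteq AB(R)$, I will reduce any condensed ideal to one with polynomial generators. If $I\in C(R)$ is generated by rational functions $r_i=f_i/g_i$ with $f_i,g_i\in\mathscr{P}(S)$ and $g_i$ nowhere zero on $S$, then each $g_i$ is a unit in $\mathscr{C}(S)$, so $\langle r_1,\dots,r_m\rangle=\langle f_1,\dots,f_m\rangle$ as ideals of $R$. The earlier proposition on condensed ideals then shows that $\{f_1,\dots,f_m\}$ is Buchbergerable on the complement of some real algebraic subset of $S$, giving $I\in AB(R)$.

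Finally, the almost ascending chain property transfers trivially to subcollections: any chain $J_1\subseteq_a J_2\subseteq_a\cdots$ in $C(R)$ or $AB(R)$ is \emph{a fortiori} a chain in $AG(R)$, and the stabilization conclusion $J_{N+k}=_a J_N$ is a statement about the ideals themselves rather than about membership in any particular family. Applying Proposition \ref{ASC} therefore finishes both cases. I do not anticipate any serious obstacle here; all the real work has already been absorbed into Proposition \ref{ASC} and the two propositions linking condensed, almost Buchbergerable, and almost Groebner ideals.
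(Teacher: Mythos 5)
Your proof is correct and follows essentially the same route as the paper: both collections are subcollections of $AG(R)$, and the almost ascending chain property passes trivially to subcollections via Proposition \ref{ASC}. You in fact supply a detail the paper leaves implicit, namely clearing the nowhere-vanishing denominators to reduce a condensed ideal to one with generators in $\mathscr{P}(S)$ before invoking the Buchbergerability proposition; this is a legitimate and needed step, but the overall argument is the same.
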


\begin{proof}
This follows from the result above and the fact that $C(R)$ and
$AB(R)$ are subsets of $AG(R)$.
\end{proof}

\begin{proposition}
Suppose that $I, J \subseteq \mathscr{C}[x_1, ..., x_n](S)$ are
condensed ideals. Then $I\subseteq_a J$ if and only if $f\in_a J$
for any $f\in I$.
\end{proposition}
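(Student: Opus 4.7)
The plan is to prove the two directions separately, with the easy direction being immediate from the definitions and the interesting direction relying crucially on Corollary \ref{finite generation condensed ideals}.

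For the forward direction, suppose $I\subseteq_a J$, so there is a measure zero set $A\subseteq S$ with $I(S-A)\subseteq J(S-A)$. Given any $f\in I$, its restriction $f|_{S-A}$ lies in $I(S-A)$, hence in $J(S-A)$, so $f\in_a J$ by definition. No use of the condensed hypothesis is needed here.

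For the converse, the obstacle to overcome is that the hypothesis $f\in_a J$ supplies a measure zero set $A_f$ for each $f\in I$, and $I$ may contain uncountably many elements, so a naive union of the $A_f$'s need not be of measure zero. The key point is that condensed ideals are finitely generated by Corollary \ref{finite generation condensed ideals}. Write $I=\langle f_1,\ldots,f_k\rangle$ with $f_i\in \mathscr{R}[x_1,\ldots,x_n](S)$. By hypothesis each $f_i\in_a J$, so there is a measure zero set $A_i\subseteq S$ with $f_i|_{S-A_i}\in J(S-A_i)$.

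Setting $A=\bigcup_{i=1}^k A_i$, which remains of measure zero as a finite union, every $f_i|_{S-A}$ belongs to $J(S-A)$. Now any element of $I(S-A)$ is, by definition of the induced ideal presheaf, a $\mathscr{C}[x_1,\ldots,x_n](S-A)$-linear combination of restrictions of the global generators $f_i$, hence lies in the ideal $J(S-A)$. Therefore $I(S-A)\subseteq J(S-A)$, which is precisely $I\subseteq_a J$. The main subtlety is to notice that without finite generation the countable (or finite) union trick for measure zero sets would not suffice, making the condensed hypothesis essential.
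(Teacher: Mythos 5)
Your proposal is correct and follows essentially the same route as the paper: the paper's proof also writes $I=\langle f_1,\ldots,f_k\rangle$ using finite generation of condensed ideals, takes the finite union $A=\bigcup_{i=1}^k A_i$ of the measure-zero exceptional sets for the generators, and concludes $I(S-A)\subseteq J(S-A)$. Your explicit treatment of the trivial forward direction and your remark on why finite generation is essential are fine additions but do not change the argument.
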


\begin{proof}
Suppose that $I$ is generated by $f_1, ..., f_k$. Then there is a
set $A_i\subseteq S$ of measure zero such that $f_i|_{S-A_i}\in
J(S-A_i)$. Let $A=\cup_{i=1}^kA_i$. Then $f_i|_{S-A}\in J(S-A)$ for
all $i=1, ..., k$. This implies that $I(S-A)\subseteq J(S-A)$.
\end{proof}

\begin{theorem}
If $J$ is almost Groebner, then $\sqrt{J}=_a FRad(J)$.
\end{theorem}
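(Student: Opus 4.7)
The plan is to establish $\sqrt{J} \subseteq FRad(J)$ trivially and then use the almost Groebner hypothesis together with Brownawell's theorem and the continuous division algorithm to get the reverse inclusion on the complement of a measure zero set.

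The easy direction: if $f^n \in J(U)$ then for each $s \in U$, $f_s^n = (f^n)_s \in J(U)|_s$, so $f_s \in \sqrt{J(U)|_s}$, giving $f \in FRad(J)(U)$. Hence $\sqrt{J} \subseteq FRad(J)$ as presheaves, which is stronger than the almost inclusion we need.

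For the nontrivial direction $FRad(J) \subseteq_a \sqrt{J}$: by hypothesis, there is a measure zero subset $A \subseteq S$ such that $J(S-A)$ has a Groebner basis $G = \{g_1, \ldots, g_k\}$ with every $LT(g_i)$ nonvanishing on $S-A$. By the corollary following Lemma~\ref{algorithms}, $\{g_{1,s}, \ldots, g_{k,s}\}$ is a Groebner basis of $J(S-A)|_s$ for every $s \in S-A$. Fix any $f \in FRad(J)(S-A)$; then $f_s \in \sqrt{J(S-A)|_s}$ for all such $s$. Because the leading coefficients of the $g_i$ do not vanish on $S-A$, the multidegrees of $g_{i,s}$ are constant in $s$, so there is a uniform degree bound $D = \max\{\deg f, \deg g_1, \ldots, \deg g_k\}$ valid on every fibre. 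Applying Brownawell's Theorem~\ref{Brownawell} fibrewise therefore produces a single exponent $e$ (depending only on $D$, $k$, $n$) such that $f_s^e \in \langle g_{1,s}, \ldots, g_{k,s}\rangle = J(S-A)|_s$ for every $s \in S-A$.

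Now apply the continuous division algorithm (Lemma~\ref{algorithms}(1)) to $f^e$ by $G$ on $S-A$, which is legal since all $LT(g_i)$ are nonvanishing: we obtain $f^e = \sum_{i=1}^k a_i g_i + r$ with $a_i, r \in \CO[x_1,\ldots,x_n](S-A)$ and no monomial of $r$ divisible by any $LT(g_i)$. Restricting to a fibre, $r_s$ is the remainder of $f_s^e$ on division by the Groebner basis $\{g_{1,s},\ldots,g_{k,s}\}$; since $f_s^e \in J(S-A)|_s$, Lemma~\ref{algorithms}(4) forces $r_s = 0$ for every $s \in S-A$. Hence $r = 0$ identically on $S-A$, so $f^e \in J(S-A)$ and therefore $f \in \sqrt{J}(S-A)$. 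This gives $FRad(J)(S-A) \subseteq \sqrt{J}(S-A)$, which combined with the first paragraph yields $\sqrt{J} =_a FRad(J)$.

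The main obstacle is controlling the Brownawell exponent uniformly in $s$. This is exactly what the Groebner hypothesis buys us: nonvanishing leading terms keep the multidegrees of the $g_{i,s}$ constant across the fibres, and the continuous coefficient presentation of $f$ gives a uniform bound on $\deg f_s$, so Brownawell returns a single $e$ that works simultaneously on all fibres. Without the almost Groebner assumption one would have no control over how degrees (and hence $e$) degenerate, which is precisely why the earlier example involving $e^{-1/s}$ breaks the naive Nullstellensatz.
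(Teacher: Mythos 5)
Your proof is correct and follows essentially the same route as the paper's: the trivial inclusion $\sqrt{J}\subseteq FRad(J)$, a uniform Brownawell exponent from the bounded fibre degrees, and then the continuous division algorithm by a Groebner basis on $S-A$ with the fibrewise vanishing of the remainder forcing $r=0$. The only cosmetic difference is that you extract the degree bound from the Groebner basis on $S-A$ rather than from a generating set of $J$ over $S$, which changes nothing.
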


\begin{proof}
It is clear that $\sqrt{J}\subseteq FRad(J)$. Let $f\in FRad(J)(U)$.
Suppose that $J$ is generated by $f_1, ..., f_k\in \mathscr{C}[x_1,
..., x_n](S)$. Let $N=max\{deg \ f_i|i=1, ..., k\}$ be the maximal
degree of all $f_i$. Since $f_s\in \sqrt{J(S)|_s}$, by theorem
\ref{Brownawell}, there is a bound $e$ depends only on $n, k, N$
such that $f^e_s\in J(S)|_s$ for any $s\in S$. Since $J$ is almost
Groebner, there exists real algebraic subset $A\subset S$ such that
$J(S-A)$ is Groebner. Let $g_1, ..., g_l$ be a Groebner basis of
$J(S-A)$. Using the division algorithm we may write
$f^e|_{U-A}=\sum^l_{i=1}a_ig_i+r$ where all $a_i, r\in \CO[x_1, ...,
x_n](U-A)$. For $s\in U-A$, since $\{g_{1, s}, ..., g_{k, s}\}$ is a
Groebner basis for $J(U-A)|_s$, and $f^e_s=\sum^k_{i=1}a_{i, s}g_{i,
s}+r_s\in J(U-A)|_s$, hence $r_s=0$ for $s\in U-A$ which implies
that $f^e|_{U-A}=\sum^l_{i=1}a_ig_i\in J(U-A)$. So $f|_{U-A}\in
\sqrt{J}(U-A)$.
\end{proof}

Combine this theorem and Proposition \ref{Hilbert's
Nullstellensatz}, we generalize the classical Hilbert's
Nullstellensatz to an equality between radical ideals and vanishing
ideals.

\begin{corollary}
If $J$ is an almost Groebner ideal, we have $\sqrt{J}=_a IV(J)$.
\label{Hilbert's Nullstellensatz Groebner ideal}
\end{corollary}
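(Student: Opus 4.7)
The plan is to derive this corollary by chaining together the two results that immediately precede it. Proposition \ref{Hilbert's Nullstellensatz} gives the presheaf-level identity $IV(J) = FRad(J)$ unconditionally, with no almost-equality needed and no hypothesis on $J$. The theorem just stated gives $\sqrt{J} =_a FRad(J)$ under the hypothesis that $J$ is almost Groebner. Composing these two should yield $\sqrt{J} =_a IV(J)$ at once.

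First I would invoke Proposition \ref{Hilbert's Nullstellensatz} to rewrite $IV(J)$ as $FRad(J)$; this holds on every open $U \subseteq S$, so in particular it holds after removing any measure-zero set, and hence trivially $IV(J) =_a FRad(J)$. Next I would apply the preceding theorem, using the hypothesis that $J$ is almost Groebner, to obtain $\sqrt{J} =_a FRad(J)$. Combining these almost-equalities by transitivity (which follows directly from the definition: if $I_1 =_a I_2$ via a measure-zero set $A$ and $I_2 =_a I_3$ via a measure-zero set $B$, then $I_1 =_a I_3$ via $A \cup B$, and a countable union of measure-zero sets in a sigma space is still of measure zero) gives $\sqrt{J} =_a IV(J)$.

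There is essentially no obstacle: the corollary is formal. The only subtlety worth stating explicitly is that the almost-equality relation on ideals is transitive, which uses the fact that a finite union of measure-zero sets in the sigma space structure associated to a condensed space is again of measure zero. Once that is noted, the proof is a one-line composition.
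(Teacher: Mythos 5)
Your proof is correct and is exactly the argument the paper intends: the text immediately preceding the corollary says to combine the theorem $\sqrt{J}=_a FRad(J)$ with Proposition~\ref{Hilbert's Nullstellensatz} ($IV(J)=FRad(J)$), which is precisely your chain. Your explicit remark on transitivity of $=_a$ is a small but welcome addition that the paper leaves implicit.
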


\begin{corollary}(Almost Hilbert Nullstellensatz)
If $I$ is a condensed ideal, then $\sqrt{I} =_a IV(I) =_a FRad(I)$.
\end{corollary}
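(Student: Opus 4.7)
The plan is simply to chain together the three preceding results. Since this corollary is stated immediately after Corollary \ref{Hilbert's Nullstellensatz Groebner ideal}, all the hard work has been carried out already: the task reduces to verifying that a condensed ideal satisfies the hypothesis (almost Groebner) of the preceding corollary.

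First I would invoke the proposition showing that any condensed ideal $I = \langle f_1, \dots, f_k\rangle$ with $f_i \in \mathscr{P}(S)[x_1,\dots,x_n]$ is almost Buchbergerable on $S$: the Buchberger procedure applied to $\{f_1,\dots,f_k\}$ runs to completion after we delete a real algebraic subset $A \subsetneq S$ on which some leading coefficient of an intermediate $S$-polynomial vanishes, and on the condensed space $S$ this $A$ has measure zero. Then I would apply the observation that every almost Buchbergerable ideal is almost Groebner (Buchberger's algorithm on $S-A$ produces the Groebner basis). This gives a set $B \subseteq S$ of measure zero such that $I(S-B)$ is a genuine Groebner ideal in $\mathscr{C}[x_1,\dots,x_n](S-B)$.

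Now Corollary \ref{Hilbert's Nullstellensatz Groebner ideal} applies verbatim to $I$ and yields $\sqrt{I} =_a IV(I)$. For the second equality, Proposition \ref{Hilbert's Nullstellensatz} gives the strict equality $IV(I) = FRad(I)$ as presheaves (valid for arbitrary ideals), and a fortiori $IV(I) =_a FRad(I)$. Combining the two chains of equalities yields $\sqrt{I} =_a IV(I) =_a FRad(I)$, as desired.

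There is no real obstacle here; the only thing to be careful about is that the measure-zero exceptional set on which $I$ becomes Groebner depends on the chosen condensed generators, but this is absorbed into the flexibility of the relation $=_a$. In this sense the statement is the clean packaging of the earlier two ingredients.
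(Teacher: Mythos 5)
Your proposal is correct and follows exactly the paper's own route: the paper's proof is the one-line observation that a condensed ideal is almost Buchbergerable and hence almost Groebner, after which the preceding corollary (together with Proposition \ref{Hilbert's Nullstellensatz} for the $IV(I)=FRad(I)$ part) applies. You have simply spelled out the same chain of reductions in more detail.
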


\begin{proof}
Since $I$ is a condensed ideal, $I$ is almost Buchberger and hence
almost Groebner.
\end{proof}

\begin{proposition}
Let $S$ be a condensed space.
\begin{enumerate}
\item For a condensed ideal $J\subseteq \mathscr{C}[x_1, ..., x_n](S)$, if $g_s\in_a J(S)|_s$ for $s\in S-A$
where $A$ is a set of measure zero, then $g\in_a J$.

\item If $J$ is a radical condensed ideal, i.e, $J=\sqrt{J}$,  then $J|_s=I(\X_s)$ for all $s\in S$ where $\X=V(J)$.

\item In particular, $I(\X)|_s=I(\X_s)$ for all $s\in S$ where $\X$ is
the zero set of some condensed ideal of $\mathscr{C}[x_1, ...,
x_n](S)$.
\end{enumerate}
\end{proposition}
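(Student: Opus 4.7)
For part (1), the plan is a fibrewise remainder argument built on the condensed-implies-almost-Buchbergerable chain developed earlier. By that proposition, there is a measure zero set $B\subseteq S$ on whose complement $J(S-B)$ has a Groebner basis $g_1,\ldots,g_l$. The continuous division algorithm of Lemma~\ref{algorithms}(1) gives $g|_{S-B}=\sum_i a_i g_i + r$ with no monomial of $r$ divisible by any $LT(g_i)$. At each $s\in S-(A\cup B)$, the fibrewise Groebner property (corollary after Lemma~\ref{algorithms}) makes $\{g_{i,s}\}$ a Groebner basis of $J|_s$, and the hypothesis $g_s\in J(S)|_s$ then forces $r_s=0$ by Lemma~\ref{algorithms}(4). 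Any measure zero subset of a condensed space is a countable union of nowhere-dense real algebraic subsets and hence meager, so $S-(A\cup B)$ is dense in $S-B$; continuity of the coefficients of $r$ then forces $r\equiv 0$ on $S-B$. Thus $g|_{S-B}\in J(S-B)$ and $g\in_a J$.

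For part (2), one inclusion is immediate: any $g\in J(S)$ vanishes on $\X=V(J)$, so $g_s$ vanishes on $\X_s$, giving $J|_s\subseteq I(\X_s)$. For the reverse, fix $h\in I(\X_s)$. The classical Hilbert Nullstellensatz applied to $J|_s\subseteq\C[x_1,\ldots,x_n]$ yields $h\in\sqrt{J|_s}$, so $h^e\in J|_s$ for some $e$. The plan is to lift $h$ into a section of $J$ itself by exploiting the presheaf identity $J=\sqrt J$. Consider the constant extension $\tilde h(t,x):=h(x)$ in $\mathscr{C}[x_1,\ldots,x_n](S)$. One argues $\tilde h\in FRad(J)$, i.e.\ $h\in\sqrt{J|_t}$ for every $t\in S$, by transporting the Brownawell-bounded membership $h^e\in J|_s$ across fibres using a condensed set of generators and the continuity of the coefficients. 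Granting this, the Almost Hilbert Nullstellensatz yields $\tilde h\in_a\sqrt J$, and the hypothesis $\sqrt J=J$ upgrades this to $\tilde h\in_a J$. Invoking part (1) in reverse, together with the almost-sheaf extension property for the Groebner presheaf $J$, then recovers $\tilde h_s=h\in J|_s$.

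Part (3) follows by applying (2) to the presheaf $J:=I(\X)$. This $J$ is automatically radical at the presheaf level: if $f^n$ vanishes on $\X$ then so does $f$. For the condensed hypothesis required by (2), combine the Almost Hilbert Nullstellensatz $I(\X)=_a\sqrt I$ with Noetherianity of $\mathscr R(S)[x_1,\ldots,x_n]$ to extract finitely many condensed generators of $\sqrt I$, and hence of $I(\X)$ up to almost equivalence. Then (2) together with $V(I(\X))=\X$ gives $I(\X)|_s=I(\X_s)$.

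The main obstacle is establishing the full-fibre claim $\tilde h\in FRad(J)$: the hypothesis $h^e\in J|_s$ is given only at the single point $s$, and propagating it to every $t\in S$, rather than only to $t$ outside a measure zero set, is precisely where the strong presheaf-level radical hypothesis $J=\sqrt J$ must be used in an essential way. A naive continuity-plus-Brownawell argument only places $h$ in $\sqrt{J|_t}$ on a dense open; closing the gap to every $t$, and dually recovering the value of the lifted section at $s$ itself when $s$ happens to lie in the exceptional measure zero set produced by the almost-sheaf argument, is the delicate point that distinguishes the pointwise identity asserted here from its easier almost-everywhere version.
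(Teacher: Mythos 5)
Your part (1) is essentially the paper's argument (divide $g$ by a Groebner basis of $J(S-B)$ and kill the remainder fibrewise), and it is correct; note only that the density detour is unnecessary and slightly risky on pathological condensed spaces --- you can simply discard $A\cup B$, observe $r_s=0$ for every $s\in S-(A\cup B)$, hence $r=0$ there and $g|_{S-(A\cup B)}\in J(S-(A\cup B))$, which is all that $g\in_a J$ requires.

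Part (2) is where your proposal genuinely breaks, and at an earlier point than the ``delicate step'' you flag. The constant extension $\tilde h(t,x):=h(x)$ of an $h\in I(\X_s)$ is almost never in $FRad(J)$: membership in $FRad(J)$ demands $h\in\sqrt{J|_t}$ for \emph{every} $t$, whereas $h$ was only chosen to vanish on the single fibre $\X_s$. Already for $J=\langle x-s\rangle$ over $S=[0,1]$ and $h=x-\tfrac12$, one has $h\notin\sqrt{\langle x-t\rangle}$ for any $t\neq\tfrac12$, so no amount of Brownawell bounds or continuity will transport the membership across fibres; the strategy cannot be repaired. For comparison, the paper's own proof of (2) is the one-line chain $I(\X_s)=\sqrt{J|_s}=\sqrt{J}|_s=J|_s$, whose middle equality $\sqrt{J|_s}=\sqrt{J}|_s$ is asserted without justification --- and it is exactly this inclusion $\sqrt{J|_s}\subseteq\sqrt{J}|_s$ (lifting a fibrewise radical element to a global section of $\sqrt J$) that fails at exceptional fibres. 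Indeed the pointwise statement itself appears false as written: take $J=\langle x^2-s\rangle$ over $S=[0,1]$. Dividing by the monic (in $x$) generator and using continuity of the remainder's coefficients shows $\sqrt J(U)=J(U)$ for every open $U$, so $J$ is a radical condensed ideal; yet $J|_0=\langle x^2\rangle$ while $I(\X_0)=I(\{(0,0)\})=\langle x\rangle$. Only the almost-everywhere version of (2) (and hence of (3), which inherits the problem through the identification of $I(\X)$ with $\sqrt J$) is recoverable by the methods of this section, e.g.\ via part (1) applied to condensed generators of $\sqrt J$. So your instinct that the single exceptional point $s$ is the crux was right, but the gap there is not closable.
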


\begin{proof}
\begin{enumerate}
\item Let
$f_1|_{S-A}, ..., f_k|_{S-A}$ be a Groebner basis for $J(S-A)$
where $A$ is a set of measure zero. Write
$g|_{S-A}=\sum^k_{i=1}a_if_i|_{S-A}+r$ where $a_i, r\in
\mathscr{C}[x_1, ..., x_n](S-A)$. Since $g_s=\sum^k_{i=1}a_{i,
s}f_{i, s}+r_s\in J(S-A)|_s$ for $s\in S-A$, this implies that
$r=0$ in $S-A$. Therefore $g\in_a J$.

\item Since $\X_s=V(J|_s)$, so $I(\X_s)=\sqrt{J|_s}=\sqrt{J}|_s=J|_s$.

\item By Hilbert Nullstellensatz, $I(\X)$ is a radical ideal and then the conclusion follows from the result above.
\end{enumerate}
\end{proof}

\subsection{Primary decomposition} In this section,
$R=\mathscr{C}[x_1, ..., x_n](S)$ where $S$ is a condensed space. We
write $C(R)$ for the collection of all condensed ideals of $R$.

\begin{definition}(Elimination ideals)
Let $J\subseteq \mathscr{C}[x_1, ..., x_n](S)$ be an ideal where $S$
is a topological space. The $l$-th elimination ideal $J_l$ is the
ideal defined by
$$J_l:=J\cap \mathscr{C}[x_{l+1}, ..., x_n](S)$$
\end{definition}

The proof of the following result is exactly same as the classical
case (see \cite[Theorem 3.1.2]{CLS}).

\begin{proposition}(The elimination theorem)
Let $J\subseteq \mathscr{C}[x_1, ..., x_n](S)$ be a Groebner ideal
and $G$ be a Groebner basis for $J$ with respect to lex order where
$x_1>x_2> \cdots >x_n$. Then for any $\ell$ where $0\leq \ell \leq
n$, the set
$$G_{\ell}:=G\cap \mathscr{C}[x_{\ell}, ..., x_n](S)$$ is a Groebner basis of
the $\ell$-th elimination ideal $J_{\ell}$.
\end{proposition}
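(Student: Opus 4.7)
The plan is to mirror the classical elimination-theorem proof (\cite[Theorem 3.1.2]{CLS}) step by step, replacing polynomial arithmetic over a field with the continuous division and leading-term machinery from Lemma \ref{algorithms}. The key input is a standard lex-order observation: if $LM(g)=x^\alpha$ involves only $x_\ell,\ldots,x_n$, then every term of $g$ does as well, because any term containing some $x_j$ with $j<\ell$ would exceed $x^\alpha$ in lex order and so could not occur; likewise any monomial dividing a monomial supported on $x_\ell,\ldots,x_n$ is itself supported on those variables.

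First I would verify the easy inclusion $G_\ell\subseteq J_\ell$: each element of $G_\ell$ lies in $G\subseteq J$ by construction and in $\mathscr{C}[x_\ell,\ldots,x_n](S)$ by the definition of the intersection. Since $G_\ell\subseteq G$, the leading terms of its elements are automatically nonvanishing on $S$, so condition (2) of the Groebner-basis definition is inherited for free. What remains is condition (1), namely that $\langle LT(g):g\in G_\ell\rangle$ equals $\langle LT(J_\ell)\rangle$ inside $\mathscr{C}[x_\ell,\ldots,x_n](S)$.

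For that, take any nonzero $f\in J_\ell$. Because $G$ is a Groebner basis of $J$, some $LT(g_i)$ with $g_i\in G$ divides $LT(f)$. By the lex-order observation, $LM(f)$ involves only $x_\ell,\ldots,x_n$, hence so does $LM(g_i)$, and then every monomial of $g_i$ avoids $x_1,\ldots,x_{\ell-1}$; in other words $g_i\in\mathscr{C}[x_\ell,\ldots,x_n](S)$ and therefore $g_i\in G_\ell$. This yields $LT(f)\in\langle LT(G_\ell)\rangle$, finishing the argument. I do not anticipate a genuine obstacle here: the only place where the continuous-coefficient setting could have intervened is in inverting $LC(g_i)$ when comparing leading terms, but the Groebner-ideal hypothesis guarantees these leading coefficients are nonvanishing on all of $S$, so no measure-theoretic fudging is required and the classical argument transfers intact.
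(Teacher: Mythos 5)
Your proof is correct and is precisely the classical argument of \cite[Theorem 3.1.2]{CLS} adapted to the continuous setting, which is exactly what the paper intends (it gives no proof of its own, simply asserting that the classical proof carries over, and your observation that the nonvanishing leading coefficients of a Groebner basis make the division and divisibility steps go through is the right justification). The only point worth flagging --- the off-by-one between the paper's definition $J_\ell = J\cap\mathscr{C}[x_{\ell+1},\ldots,x_n](S)$ and the proposition's $G_\ell = G\cap\mathscr{C}[x_{\ell},\ldots,x_n](S)$ --- is an inconsistency in the paper's own statement, not a gap in your argument.
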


For the following proof, see \cite[Theorem 4.3.11]{CLS}.
\begin{proposition}
Let $I, J$ be ideals in $\mathscr{C}[x_1, ..., x_n](S)$. Then
$$I\cap J=(tI+(1-t)J)\cap \mathscr{C}[x_1, ..., x_n](S)$$
\end{proposition}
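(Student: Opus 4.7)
The plan is to mimic the classical proof of ideal intersection via an auxiliary variable, noting that the argument only uses ring-theoretic manipulations and the substitution $t \mapsto 0, 1$, all of which carry over verbatim to $\mathscr{C}[x_1,\ldots,x_n,t](S)$.

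First I would prove the inclusion $I \cap J \subseteq (tI+(1-t)J) \cap \mathscr{C}[x_1,\ldots,x_n](S)$. Given $f \in I \cap J$, simply observe the trivial identity $f = tf + (1-t)f$. Since $f \in I$ the term $tf$ lies in $tI$, and since $f \in J$ the term $(1-t)f$ lies in $(1-t)J$. The element $f$ obviously does not involve $t$, so it belongs to the right-hand side.

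For the reverse inclusion, suppose $g \in (tI + (1-t)J) \cap \mathscr{C}[x_1,\ldots,x_n](S)$. By definition of the ideal $tI+(1-t)J$ in $\mathscr{C}[x_1,\ldots,x_n,t](S)$, we may write
\[
g(x) = \sum_i p_i(x,t)\, t\, f_i(x) + \sum_j q_j(x,t)\, (1-t)\, h_j(x)
\]
with $f_i \in I$, $h_j \in J$, and $p_i, q_j \in \mathscr{C}[x_1,\ldots,x_n,t](S)$. Since $g$ does not involve $t$, this identity is preserved under any substitution $t = c$. Substituting $t = 0$ kills the first sum and gives $g(x) = \sum_j q_j(x,0) h_j(x) \in J$; substituting $t = 1$ kills the second sum and gives $g(x) = \sum_i p_i(x,1) f_i(x) \in I$. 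Therefore $g \in I \cap J$.

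There is no substantive obstacle here: the argument is purely formal and does not require noetherianity, Groebner structure, condensedness, or any analytic input, so nothing in the transition from $\C[x_1,\ldots,x_n]$ to $\mathscr{C}[x_1,\ldots,x_n](S)$ causes trouble. The only subtlety worth remarking is that one must form $tI + (1-t)J$ as an ideal in the larger ring $\mathscr{C}[x_1,\ldots,x_n,t](S)$ (with coefficients allowed to depend on $t$); once this convention is fixed, the substitutions $t=0$ and $t=1$ are well-defined ring homomorphisms to $\mathscr{C}[x_1,\ldots,x_n](S)$ that carry $I$ and $J$ into themselves, and the proof goes through exactly as in \cite[Theorem 4.3.11]{CLS}.
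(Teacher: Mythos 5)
Your proof is correct and is essentially the paper's approach: the paper gives no argument of its own but simply points to \cite[Theorem 4.3.11]{CLS}, and what you have written out is exactly that classical argument (the identity $f=tf+(1-t)f$ for one inclusion, the substitutions $t=0$ and $t=1$ for the other), correctly transplanted to $\mathscr{C}[x_1,\ldots,x_n,t](S)$. Your closing remark that the argument is purely formal and unaffected by the passage from $\C$ to continuous coefficients is exactly the point the paper is implicitly relying on.
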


\begin{corollary}
If $I, J$ are two condensed ideals in $\mathscr{C}[x_1, ...,
x_n](S)$, then $I\cap J$ is an almost condensed ideal.
\label{intersecting ideals}
\end{corollary}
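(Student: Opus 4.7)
The plan is to combine the immediately preceding two propositions. Introduce an auxiliary variable $t$ and form the ideal $K = tI + (1-t)J$ in $\mathscr{C}[x_1, \ldots, x_n, t](S)$. By the preceding proposition, $I \cap J = K \cap \mathscr{C}[x_1, \ldots, x_n](S)$, realizing $I \cap J$ as an elimination ideal of $K$. Fixing condensed generating sets $\{f_1, \ldots, f_k\}$ of $I$ and $\{g_1, \ldots, g_l\}$ of $J$ (which exist by the finite generation of condensed ideals and Corollary \ref{finite generation condensed ideals}), the ideal $K$ is generated by $\{tf_i\} \cup \{(1-t)g_j\}$, each of which lies in $\mathscr{R}(S)[x_1, \ldots, x_n, t]$. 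Hence $K$ is itself a condensed ideal in the enlarged ring.

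Next, invoke the earlier result that a condensed ideal is almost Buchbergerable: there is a real algebraic subset $A \subsetneq S$ such that running the continuous Buchberger algorithm on the generators of $K(S - A)$, with respect to a lexicographic order satisfying $t > x_1 > \cdots > x_n$, terminates and produces a Groebner basis $G$ of $K(S - A)$. Applying the Elimination Theorem then yields that $G \cap \mathscr{C}[x_1, \ldots, x_n](S - A)$ is a Groebner basis of the elimination ideal of $K(S - A)$, which equals $(I \cap J)(S - A)$.

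It remains to check that the elements of this elimination Groebner basis are condensed. The Buchberger algorithm uses only ring operations together with inversion of leading coefficients; starting from generators with coefficients in $\mathscr{P}(S)$, every intermediate element stays in $\mathscr{R}(S - A)[x_1, \ldots, x_n, t]$ once $A$ is enlarged to include the zero loci of all the leading coefficients encountered. This is precisely the real algebraic subset constructed in the proof of almost-Buchbergerability. Intersecting with $\mathscr{R}(S - A)[x_1, \ldots, x_n]$ preserves being condensed, so $(I \cap J)(S - A)$ is generated by condensed elements in $\mathscr{C}[x_1, \ldots, x_n](S - A)$, which is exactly the conclusion that $I \cap J$ is almost condensed.

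The main obstacle is the last step: certifying that every element produced by the continuous Buchberger procedure actually lies in $\mathscr{R}(S - A)[x_1, \ldots, x_n, t]$, i.e., remains rational in $s$ on the complement of a real algebraic subset and polynomial in the $x_i$ and $t$. The bookkeeping needed for this is already implicit in the earlier proof that condensed ideals are almost Buchbergerable, and explicitly invoking that bookkeeping, together with the elimination theorem, is what closes the argument.
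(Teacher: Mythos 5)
Your proposal is correct and follows essentially the same route as the paper: form $L=\langle tf_1,\dots,tf_k,(1-t)g_1,\dots,(1-t)g_l\rangle$ from condensed bases, run the continuous Buchberger algorithm off a real algebraic subset with a lex order placing $t$ first, and apply the elimination theorem to extract a basis of $(I\cap J)(S-A)$. Your extra step verifying that the Buchberger output remains condensed (rational in $s$ after inverting the nonvanishing leading coefficients) is a detail the paper leaves implicit, and it is handled correctly.
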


\begin{proof}
Let $\{f_1, ..., f_k\}, \{g_1, ..., g_l\}$ be condensed bases for
$I$ and $J$ respectively which restricted to $S-A$ are Groebner
bases of $I(S-A)$ and $J(S-A)$ respectively for some real algebraic
subset $A\subseteq S$. Consider the condensed ideal
$$L=<tf_1, ..., tf_k, (1-t)g_1, ...,(1-t)g_l>$$ in $\mathscr{C}[x_1,
..., x_n, t](S)$. By the continuous Buchberger's algorithm Lemma
\ref{algorithms}, compute a Groebner basis $G$ for $L(S-A)$ with
respect to lex order in which $t$ is greater than all $x_i$. By
Proposition above, the elements of $G$ which do not contain the
variable $t$ will form a basis of $I(S-A)\cap J(S-A)=(I\cap
J)(S-A)$.
\end{proof}

\begin{definition}(Almost irreducible semi-topological algebraic sets)
For $J\in C(R)$ a condensed ideal, $V(J)$ is called a
semi-topological algebraic set. If $V(J)\neq_a V(J_1)\cup V(J_2)$
for any $J_1, J_2\in C(R)$ with $V(J_1), V(J_2)\neq_a V(J)$, then we
say $V(J)$ is almost irreducible. Otherwise, $V(J)$ is said to be
almost reducible.
\end{definition}

The following result is a simple consequence of the almost ascending
chain property.

\begin{proposition}
If $J\in C(R)$, then $V(J)$ can be written as a finite union of
almost irreducible affine semi-topological algebraic sets.
\end{proposition}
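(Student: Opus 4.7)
The plan is a noetherian-type induction, mimicking the classical argument that a noetherian topological space decomposes into finitely many irreducible components, but carried out modulo sets of measure zero and powered by the almost ascending chain property. Let $\Sigma \subseteq C(R)$ denote the collection of condensed ideals $J$ for which the conclusion fails. I aim to derive a contradiction from the assumption $\Sigma \neq \emptyset$, using Proposition \ref{ASC} together with its corollary ($C(R) \subseteq AG(R)$) as the key engine: any ascending chain in $C(R)$ must stabilize modulo $=_a$.

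Assuming $\Sigma \neq \emptyset$, I would first extract an almost-maximal element $J_0 \in \Sigma$ --- a standard consequence of almost ACC, since otherwise one could build a strictly ascending chain in $\Sigma$ violating it. Because a single almost irreducible set is by itself a valid one-term finite-union decomposition, $V(J_0)$ cannot be almost irreducible, so the definition of almost reducibility supplies $J_1, J_2 \in C(R)$ with $V(J_0) =_a V(J_1) \cup V(J_2)$ and $V(J_i) \neq_a V(J_0)$ for $i=1,2$. From $V(J_i) \subseteq V(J_1) \cup V(J_2) =_a V(J_0)$ I read off $V(J_i) \subsetneq_a V(J_0)$. I would then replace $J_i$ by $J_0 + J_i$, which remains condensed (the union of condensed bases for $J_0$ and $J_i$ is a condensed basis for the sum), satisfies $V(J_0+J_i) = V(J_0) \cap V(J_i) =_a V(J_i)$, and strictly almost contains $J_0$ --- for if $J_0 =_a J_0+J_i$ then $J_i \subseteq_a J_0$, which on passing to varieties gives $V(J_0) \subseteq_a V(J_i)$, contradicting $V(J_i) \neq_a V(J_0)$. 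Almost maximality of $J_0$ then forces $J_0+J_i \notin \Sigma$ for each $i$, so each $V(J_i) =_a V(J_0+J_i)$ is a finite union of almost irreducibles; concatenating the two decompositions produces one for $V(J_0)$, the desired contradiction.

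The main obstacle I expect is the ideal-variety translation in the almost setting: a strict almost inclusion of varieties does not by itself produce a strict almost inclusion of the chosen generating condensed ideals, so the enlargement $J_i \rightsquigarrow J_0 + J_i$ is crucial --- it converts a variety-side statement into an ideal-side statement that the chain condition can consume. A secondary subtlety is that almost ACC is phrased for $AG(R)$, so I must verify at every construction step that the ideals manufactured remain inside $C(R)$; taking unions of condensed bases, rather than arbitrary sums of almost-equal representatives, handles this cleanly.
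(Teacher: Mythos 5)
Your argument is correct and takes exactly the approach the paper intends: the paper states this result without proof as ``a simple consequence of the almost ascending chain property,'' and your noetherian induction via an almost-maximal counterexample, powered by Proposition \ref{ASC}, is precisely that consequence spelled out. The replacement $J_i \rightsquigarrow J_0 + J_i$ to convert the strict inclusion of varieties into a strict almost-ascent of condensed ideals is the right way to fill the one genuine gap in the ``simple consequence.''
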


\begin{definition}(Almost prime ideals)
Let $J\in C(R)$. The ideal $J$ is said to be almost prime if for
condensed $f, g\in \mathscr{R}[x_1, ..., x_n](S)$, $fg\in_a J$, then
either $f\in_a J$ or $g\in_a J$.
\end{definition}

If we take $S$ to be a point, then $R=\mathscr{R}(S)[x_1, ...,
x_n]=\C[x_1, ..., x_n]$ and the above definition reduces to the
usual definition of prime ideals.

\begin{proposition}
Let $J\in C(R)$ and $V=V(J)$.
\begin{enumerate}
\item If $J$ is an almost radical ideal, i.e., $J=_a\sqrt{J}$, then $J$ is almost prime if and only if $V$ is
almost irreducible.
\item For $J\in C(R)$, write $V(J)=_a V_1\cup \cdots \cup V_k$ where $V_i$ are
almost irreducible components of $V(J)$. Then
$\sqrt{J}=_aP_1\cap \cdots \cap P_k$ where $P_i=I(V_i)$ for
$i=1, ..., k$.
\end{enumerate}\label{irreducible components}
\end{proposition}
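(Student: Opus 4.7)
My plan for \emph{(1)} is to treat the two directions separately, each by contrapositive/definition chase, with the Almost Hilbert Nullstellensatz (Corollary of \ref{Hilbert's Nullstellensatz Groebner ideal}) as the workhorse, since $J$ being almost radical lets me pass freely between $J$, $\sqrt{J}$, and $IV(J)$ modulo sets of measure zero.

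For the forward direction (almost prime $\Rightarrow$ almost irreducible) I argue by contrapositive. Suppose $V=_a V(J_1)\cup V(J_2)$ with $J_i\in C(R)$ and $V(J_i)\neq_a V$. Fix condensed bases $\{g^{(i)}_1,\ldots,g^{(i)}_{m_i}\}$ of $J_i$. I claim that for each $i$ some basis element $f_i:=g^{(i)}_{j_i}$ fails to be almost in $J$: otherwise every generator would be almost in $J$, and by the preceding proposition (for condensed ideals, $I\subseteq_a J$ iff every element of $I$ is almost in $J$) we would have $J_i\subseteq_a J$, hence $V\subseteq_a V(J_i)$, forcing $V(J_i)=_a V$, a contradiction. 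Having produced such condensed $f_1,f_2$, observe that $f_1f_2$ vanishes on $V(J_1)\cup V(J_2)=_a V$, so $f_1f_2\in_a IV(J)=_a\sqrt{J}=_a J$. This violates almost primality.

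For the reverse direction (almost irreducible $\Rightarrow$ almost prime) suppose $fg\in_a J$ with $f,g$ condensed, and set
$$V_f:=V(J+\langle f\rangle),\qquad V_g:=V(J+\langle g\rangle).$$
Both are semi-topological algebraic sets since $J+\langle f\rangle, J+\langle g\rangle\in C(R)$. Pointwise fibrewise, $V_f\cup V_g=V(J)\cap V(fg)$, and because $fg\in_a J$ we have $V(J)\subseteq_a V(fg)$, hence $V_f\cup V_g=_a V$. Almost irreducibility forces (WLOG) $V_f=_a V$. Applying Hilbert's Nullstellensatz twice gives $\sqrt{J+\langle f\rangle}=_a I(V_f)=_a I(V)=_a\sqrt{J}=_a J$, and since $f\in J+\langle f\rangle\subseteq\sqrt{J+\langle f\rangle}$ I conclude $f\in_a J$.

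For \emph{(2)} the plan is a one-line application of \emph{(1)} and the Nullstellensatz. Apply $I(-)$ to the almost equality $V(J)=_a V_1\cup\cdots\cup V_k$; since the symmetric difference of $V(J)$ and $\bigcup V_i$ sits over a measure zero subset $A\subseteq S$, we get $I(V(J))=_a I(V_1\cup\cdots\cup V_k)=\bigcap_i I(V_i)=\bigcap_i P_i$. By Hilbert's Nullstellensatz, $I(V(J))=_a\sqrt{J}$, giving $\sqrt{J}=_a P_1\cap\cdots\cap P_k$.

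The only real obstacle is the bookkeeping of ``almost'' and condensedness: I need Hilbert's Nullstellensatz to apply at each step, which is why I insisted that the $J_i$, $J+\langle f\rangle$, $J+\langle g\rangle$ are condensed (hence almost Groebner by the earlier proposition) so that $\sqrt{\cdot}=_a IV(\cdot)$, and why I chose the $f_i$ from condensed bases so they qualify as test elements in the almost-prime definition. Once these checks are in place, every step is a straightforward ``take $V$ or $I$ and absorb a measure-zero set.''
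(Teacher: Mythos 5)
Your proof is correct and follows essentially the same route as the paper: contrapositive plus the almost Nullstellensatz for one direction of (1), the decomposition $V=_a (V\cap V(f))\cup(V\cap V(g))$ (your $V_f\cup V_g$) for the other, and $I(V_1\cup\cdots\cup V_k)=\bigcap_i I(V_i)$ combined with $IV(J)=_a\sqrt{J}$ for (2). Your only real addition is to justify, via the proposition characterizing $\subseteq_a$ for condensed ideals by generators, the existence of condensed $f_i\in_a J_i$ with $f_i\notin_a J$, a point the paper simply asserts.
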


\begin{proof}
\begin{enumerate}
\item
Suppose that $J$ is almost prime. If $V$ is almost reducible,
let $V=_aV(J_1)\cup V(J_2)$ where $J_1, J_2\in C(R)$ and
$V(J_1)\subsetneq_a V, V(J_2)\subsetneq_a V$. Then there exists
$f\in_a J_1, g\in_a J_2$, $f, g$ condensed, which are not in
$J(S-A)$ for any measure zero set $A$. Because of $(fg)(V)=_a
0$, $fg\in_a IV(J)=_a \sqrt{J}=_a J$. So $f\in_a J$ or $g\in_a
J$ which is a contradiction. Hence $V$ is almost irreducible.

Now assume that $V$ is irreducible. Let $f, g\in
\mathscr{R}[x_1, ..., x_n](S)$ such that $fg\in_a J$. Since
$V=_aV\cap V(fg)=_aV\cap V(f)\cup V\cap V(g)$, by Corollary
\ref{intersecting ideals} and the irreducibility of $V$, we must
have $V\cap V(f)=_aV$ or $V\cap V(g)=_aV$. By Hilbert
Nullstellensatz, this implies that $f$ or $g$ are almost in
$\sqrt{J}=J$. Hence $J$ is almost prime.

\item Let $J'=_a P_1\cap \cdots \cap P_k$. Then for $f\in_a J'$,
$f(V_i)=_a 0$ for all $i$ which implies that $f\in_a \sqrt{J}$.
If $g\in_a \sqrt{J}$, then $g(V_i)=_a 0$ for all $i$ and hence
$g\in_a I(V_i)=P_i$ for all $i$ which implies that $g\in_a J'$.
Therefore $J'=_a\sqrt{J}$.
\end{enumerate}
\end{proof}

The following result is of fundamentally important in our theory.

\begin{theorem}
If $J\subseteq \mathscr{C}[x_0, ...., x_n](S)$ is a condensed ideal,
then $\sqrt{J}$ is almost condensed in $\mathscr{C}[x_0, ...,
x_n](S)$.
\end{theorem}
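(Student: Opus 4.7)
The plan is to produce an explicit condensed ideal $K\subseteq \mathscr{C}[x_0,\ldots,x_n](S)$ with $\sqrt{J}=_a K$. After clearing denominators in the rational-function-coefficient generators of $J$, I may assume $J$ is generated by elements $f_1,\ldots,f_k\in\mathscr{P}(S)[x_0,\ldots,x_n]$. Since $\mathscr{P}(S)$ is the coordinate ring of the complex Zariski closure $\widetilde{S}\subseteq\C^m$ of $S$, it is a finitely generated $\C$-algebra, so $\mathscr{P}(S)[x_0,\ldots,x_n]$ is Noetherian. Let $J'$ denote the ideal these $f_i$ generate in $\mathscr{P}(S)[x_0,\ldots,x_n]$; its radical $\sqrt{J'}=(g_1,\ldots,g_m)$ is finitely generated with $g_i\in\mathscr{P}(S)[x_0,\ldots,x_n]$. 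Take $K$ to be the condensed ideal of $\mathscr{C}[x_0,\ldots,x_n](S)$ generated by $g_1,\ldots,g_m$.

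The containments $f_i\in\sqrt{J'}$ and $g_i^{N_i}\in J'\subseteq J$ sandwich $J\subseteq K\subseteq\sqrt{J}$, so $V(K)=V(J)$ and hence $IV(K)=IV(J)$. Applying the Almost Hilbert Nullstellensatz (Corollary \ref{Hilbert's Nullstellensatz Groebner ideal}) to the condensed ideals $J$ and $K$ gives $\sqrt{J}=_a IV(J)=IV(K)=_a\sqrt{K}$, so it suffices to show $K$ is almost radical. For this, decompose $\sqrt{J'}=\mathfrak{p}_1\cap\cdots\cap\mathfrak{p}_r$ into minimal primes in the Noetherian ring $\mathscr{P}(S)[x_0,\ldots,x_n]$; each $\mathfrak{p}_i$ cuts out an irreducible subvariety of $\widetilde{S}\times\C^{n+1}$. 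By generic reducedness in characteristic zero, the locus in $\widetilde{S}$ on which some $\mathfrak{p}_i|_s$ is non-reduced, or on which specialization fails to commute with the intersection $\cap\mathfrak{p}_i$, is contained in a proper Zariski-closed subset of $\widetilde{S}$; intersecting with $S$ yields a measure-zero set $A\subseteq S$ outside of which $K|_s=\sqrt{J'}|_s$ is a radical ideal of $\C[x_0,\ldots,x_n]$.

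After enlarging $A$ so that $\{g_1,\ldots,g_m\}$ restricts to a Groebner basis of $K$ on $S-A$, the continuous division algorithm (Lemma \ref{algorithms}) applied to any $f\in\sqrt{K}(S)$ writes $f|_{S-A}=\sum_i c_i g_i + r$. For $s\in S-A$, the relation $f_s^N\in K|_s$ combined with radicality of $K|_s$ gives $f_s\in K|_s$, and uniqueness of remainders on division by a Groebner basis forces $r_s=0$; hence $r=0$ on $S-A$ and $f\in_a K$. Combined with $K\subseteq\sqrt{K}$ this yields $K=_a\sqrt{K}=_a\sqrt{J}$, exhibiting $\sqrt{J}$ as almost condensed. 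The principal obstacle is the generic-radicality step: since $\mathscr{P}(S)$ is only the coordinate ring of a possibly singular affine variety $\widetilde{S}$, rather than a polynomial ring over a field, one must combine generic flatness of $\mathscr{P}(S)[x]/\mathfrak{p}_i$ over $\mathscr{P}(S)$ with generic smoothness — first restricting to the regular locus of $\widetilde{S}$, whose complement in $S$ has measure zero by condensedness — to verify that the bad locus where fibers acquire nilpotents or where components collide is genuinely contained in a proper algebraic subset of $\widetilde{S}$.
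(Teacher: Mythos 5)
Your proposal is correct in outline but follows a genuinely different route from the paper's. The paper decomposes $V(J)$ into almost irreducible components, invokes Proposition \ref{irreducible components} to write $\sqrt{J}=_a P_1\cap\cdots\cap P_k$ with $P_i=I(V_i)$, and then applies Corollary \ref{intersecting ideals}; this is short, but it tacitly requires each $P_i=I(V_i)=_a\sqrt{J_i}$ to already be (almost) condensed, which is essentially an instance of the statement being proved. You instead build an explicit candidate: after clearing denominators you pass to the Noetherian subring $\mathscr{P}(S)[x_0,\ldots,x_n]=\C[\widetilde{S}][x_0,\ldots,x_n]$, take the algebraic radical $\sqrt{J'}=(g_1,\ldots,g_m)$ there, and let $K$ be the condensed ideal it generates; the sandwich $J\subseteq K\subseteq\sqrt{J}$ together with Corollary \ref{Hilbert's Nullstellensatz Groebner ideal} reduces the theorem to showing $K$ is almost radical, and the final division-algorithm step is exactly the paper's own proposition that $g_s\in_a J(S)|_s$ for almost all $s$ forces $g\in_a J$. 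The generic-radicality step you flag as the principal obstacle is indeed the crux, but it does go through: every proper Zariski-closed subset of $\widetilde{S}$ meets $S$ in a proper real algebraic, hence measure-zero, subset (since $\widetilde{S}$ is the Zariski closure of $S$ and $S$ is condensed); each minimal prime of $\sqrt{J'}$ either fails to dominate a component of $\widetilde{S}$, in which case its image is discarded into the bad set, or has geometrically reduced generic fiber because we are in characteristic zero; and generic flatness makes specialization commute with the intersection of the minimal primes outside a further proper closed subset. What your approach buys is a non-circular, constructive identification of $\sqrt{J}$ with a concrete condensed ideal $K$; what it costs is the import of generic flatness and generic reducedness of fibers, machinery noticeably heavier than the Groebner-basis toolkit to which the paper otherwise confines itself.
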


\begin{proof}
Let $V(J)=_a V_1\cup \cdots \cup V_k$ be a decomposition into almost
irreducible components. By Proposition \ref{irreducible components},
$\sqrt{J}=_a P_1\cap \cdots \cap P_k$ where $V_i=V(P_i)$ and $P_i$
is an almost prime. Hence by Corollary \ref{intersecting ideals}, we
know that $\sqrt{J}$ is almost condensed in $\mathscr{C}[x_0, ...,
x_n](S)$.
\end{proof}

\begin{corollary}
If $J\subseteq \mathscr{C}[x_1, ...., x_n](S)$ is an almost Groebner
ideal, then $\sqrt{J}$ is almost Groebner.
\end{corollary}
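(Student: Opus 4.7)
The plan is to mimic the argument of the preceding theorem, replacing the role of condensed ideals with almost Groebner ideals throughout. By the almost Hilbert Nullstellensatz for almost Groebner ideals (Corollary~\ref{Hilbert's Nullstellensatz Groebner ideal}), we have $\sqrt{J} =_a IV(J)$, so it suffices to exhibit, on some complement $S-A'$ with $A'$ of measure zero, a Groebner basis for $IV(J)$.

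The first step is to extend the almost irreducible decomposition to an almost Groebner ideal $J$: write $V(J) =_a V_1 \cup \cdots \cup V_k$ with each $V_i$ almost irreducible. The proof of this decomposition for condensed $J$ uses only the almost ascending chain property, and Proposition~\ref{ASC} establishes that property for the full class $AG(R)$, so the decomposition carries over verbatim. Next, I would reproduce Proposition~\ref{irreducible components}(2) in the almost Groebner setting: the vanishing ideals $P_i := I(V_i)$ satisfy $\sqrt{J} =_a P_1 \cap \cdots \cap P_k$. The argument needs only the Nullstellensatz and the elementary properties of vanishing ideals, both of which are already available for almost Groebner $J$.

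The substantive step is to verify that each $P_i$ is almost Groebner and that intersections of almost Groebner ideals remain almost Groebner. For the first claim, each almost irreducible $V_i$ is, by the definition of semi-topological algebraic set, the zero set of some condensed ideal $Q_i$, so $P_i = I(V_i) =_a \sqrt{Q_i}$; the preceding theorem then gives $\sqrt{Q_i}$ almost condensed, hence almost Buchbergerable, hence almost Groebner. For the second claim, I would adapt the proof of Corollary~\ref{intersecting ideals}: on a common complement $S-A$ where both almost Groebner ideals have Groebner bases $\{f_i\}$ and $\{g_j\}$, form the auxiliary ideal $\langle tf_1,\ldots,tf_k,(1-t)g_1,\ldots,(1-t)g_l\rangle$ in $\mathscr{C}[x_1,\ldots,x_n,t](S-A)$ and run the continuous Buchberger algorithm under a lex order in which $t$ dominates all $x_i$; by the elimination proposition, the elements of the resulting Groebner basis not involving $t$ furnish a Groebner basis for the intersection on $S-A''$ for a slightly enlarged measure-zero set $A''$.

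The main obstacle is bookkeeping of the finitely many exceptional measure-zero sets introduced along the way (one from the decomposition, one for each $P_i \leadsto \sqrt{Q_i}$, and one for each pairwise application of continuous Buchberger), and, more subtly, checking that the leading coefficients of intermediate $S$-polynomials in the Buchberger runs can indeed be made nonvanishing outside a real algebraic subset (as in the proof that condensed ideals are almost Buchbergerable). Once this is verified, a finite union of measure-zero sets remains of measure zero, and assembling the pieces produces a Groebner basis for $\sqrt{J}$ on the complement of a single measure-zero subset of $S$, completing the proof.
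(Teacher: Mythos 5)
Your overall strategy --- reduce via the almost Nullstellensatz to an almost irreducible decomposition of $V(J)$ and then control the vanishing ideals of the components --- is far more elaborate than the paper's argument, and it stumbles exactly where all the content lies. The decisive step in your write-up is the claim that each almost irreducible component $V_i$ of $V(J)$ ``is, by the definition of semi-topological algebraic set, the zero set of some condensed ideal $Q_i$.'' For $J$ merely almost Groebner this is unjustified: the decomposition proposition, and indeed the very notion of an almost irreducible component, are set up in the paper only for $V(J)$ with $J$ condensed, and a Groebner ideal need not be condensed. Concretely, $J=\langle x-e^s\rangle\subseteq\mathscr{C}[x](\R)$ is a Groebner ideal (single monic generator), but $V(J)=\{(s,e^s)\}$ is not, even after deleting a measure-zero subset of $\R$, the zero locus of any nonzero condensed ideal, since a relation $\sum_i a_i(s)e^{is}=0$ with rational $a_i$ on a dense set forces all $a_i=0$. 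So $P_1=I(V_1)$ cannot be identified with $\sqrt{Q_1}$ for a condensed $Q_1$, and the reduction to the preceding theorem collapses. The secondary worry you flag yourself is also genuine: running the continuous Buchberger algorithm on $\langle tf_1,\ldots,tf_k,(1-t)g_1,\ldots,(1-t)g_l\rangle$ with merely continuous (non-rational) coefficients, the vanishing loci of the intermediate leading coefficients are just zero sets of continuous functions, and nothing guarantees these lie in measure-zero sets of the measure topology, which is built exclusively from real algebraic subsets.

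For comparison, the paper's own proof is a one-line reduction: $J$ almost Groebner implies $J$ almost condensed, hence $\sqrt{J}$ is almost condensed by the preceding theorem, hence almost Groebner. The entire difficulty is thereby compressed into the first implication, which the paper asserts without proof and which the example above shows is at best non-obvious. Your longer route ultimately needs the same bridge between almost Groebner ideals and condensed ones and does not supply it; if that bridge can be established (or is taken as given), the paper's short argument is the efficient one and your detour through the decomposition becomes unnecessary.
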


\begin{proof}
Since $J$ is almost Groebner, it is almost condensed, hence
$\sqrt{J}$ is almost condensed and hence almost Groebner.
\end{proof}

\begin{definition}
Let $I, J\subseteq \mathscr{C}[x_1, ..., x_n](S)$ be two condensed
ideals where $S$ is a condensed space. The ideal quotient of $I$ by
$J$ is defined by
$$I:J=\{f\in \mathscr{C}[x_1, ..., x_n](S)|fg\in I, \forall g\in J\}$$
\end{definition}

We refer the reader to \cite[Theorem 4.4.11]{CLS} for the proof of
the following result.

\begin{proposition}
Let $J\in \mathscr{C}[x_1, ..., x_n](S)$ be an ideal where $S$ is a
condensed space. If $\{f_1, ..., f_k\}$ is a condensed basis of the
ideal $J\cap <g>$ where $f_1, ..., f_k, g\in \mathscr{R}(S)[x_0,
..., x_n]$, then $\{f_1/g, ..., f_k/g\}$ is a condensed basis of
$I:<g>$. \label{quotient single}
\end{proposition}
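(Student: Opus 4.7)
The plan is to adapt the classical argument (CLS Theorem 4.4.11) to the condensed, \emph{almost} framework. Note that the conclusion should read ``$J:\langle g\rangle$'' throughout.

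\textbf{Step 1: $\{f_i/g\}\subseteq_a J:\langle g\rangle$.} First I would show that each quotient $q_i:=f_i/g$ is a well-defined condensed element lying (almost) in $J:\langle g\rangle$. Since $f_i\in J\cap \langle g\rangle$, there exists $q_i\in \mathscr{C}[x_1,\ldots,x_n](S)$ with $f_i=q_i g$. Because $f_i,g\in \mathscr{R}(S)[x_1,\ldots,x_n]$, polynomial long division in the localization of $\mathscr{R}(S)$ which inverts $LC(g)$ produces $q_i\in \mathscr{R}(S)[x_1,\ldots,x_n]$ on the complement of the real algebraic subset $A_i\subset S$ where the denominators vanish. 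Since $S$ is condensed, $A_i$ is nowhere dense. From $q_i g=f_i\in J$ we read off $q_i\in J:\langle g\rangle$ on $S-A_i$.

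\textbf{Step 2: $J:\langle g\rangle\subseteq_a \langle q_1,\ldots,q_k\rangle$.} Let $h\in J:\langle g\rangle$. Then $hg\in J$, and trivially $hg\in \langle g\rangle$, so $hg\in J\cap\langle g\rangle=\langle f_1,\ldots,f_k\rangle$. Write $hg=\sum a_i f_i=g\sum a_i q_i$ with $a_i\in \mathscr{C}[x_1,\ldots,x_n](S)$. Cancelling $g$ (legitimate on a suitable dense open $S-B$, where $B$ is a real algebraic subset on which $g$ behaves as a zero-divisor or vanishes identically) gives $h=\sum a_i q_i$ on $S-(A\cup B)$, so $h\in_a \langle q_1,\ldots,q_k\rangle$. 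Combined with Step 1, this establishes the almost equality of ideals $J:\langle g\rangle =_a \langle q_1,\ldots,q_k\rangle$, and by Step 1 the generators are condensed, so $\{q_1,\ldots,q_k\}$ is a condensed basis.

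\textbf{Main obstacle.} The delicate point is legitimizing both the division $f_i/g$ and the cancellation of $g$ in the ring $\mathscr{C}[x_1,\ldots,x_n](S)$, which is in general neither a domain nor supports exact polynomial division. The classical CLS argument runs over $\mathbb{C}[x_1,\ldots,x_n]$, which is a UFD. Here the remedy is entirely topological: the loci where denominators vanish, where $LC(g)$ is zero, or where $g$ fails to be a non-zero-divisor fibrewise, are all cut out by polynomial conditions on $S$, hence are real algebraic subsets; by the condensed hypothesis each is nowhere dense, so removing their union leaves a dense open set on which all the manipulations above are valid. This is precisely the reason the statement is formulated in the condensed/almost framework rather than as an exact identity of ideals, and it also explains why the weaker conclusion (a condensed basis of $J:\langle g\rangle$, rather than a Groebner basis) is the natural output of this construction.
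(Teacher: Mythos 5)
Your proof is correct and takes essentially the same route as the paper, which offers no argument of its own but simply defers to \cite[Theorem 4.4.11]{CLS}: you have adapted that classical proof to the continuous setting. The extra care you take with the real algebraic loci where $LC(g)$ vanishes or $g_s$ is the zero polynomial is precisely what is needed to justify the division $f_i/g$ and the cancellation of $g$ in $\mathscr{C}[x_1, \ldots, x_n](S)$, and the resulting ``almost'' reading of the conclusion is consistent with how the paper subsequently uses the result to deduce only that $I:J$ is \emph{almost} condensed.
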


\begin{proposition}
Let $I, J\subseteq \mathscr{C}[x_0, ..., x_n](S)$ be condensed
ideals. Then $I:J$ is also an almost condensed ideal.
\label{quotient ideal}
\end{proposition}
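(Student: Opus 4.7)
The plan is to reduce to the principal case handled by Proposition \ref{quotient single} via the standard identity $I:J = \bigcap_{i=1}^k I:\langle g_i\rangle$ when $J = \langle g_1,\ldots,g_k\rangle$, and then combine the resulting pieces using the fact that intersections of (almost) condensed ideals remain almost condensed (Corollary \ref{intersecting ideals}).

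First, by Corollary \ref{finite generation condensed ideals} every condensed ideal is finitely generated, so we may choose condensed generators $g_1,\ldots,g_k \in \mathscr{R}(S)[x_0,\ldots,x_n]$ for $J$. A direct check (which is purely algebraic and carries over from the classical case) gives
\[
I:J \;=\; \bigcap_{i=1}^k I:\langle g_i\rangle.
\]
Thus it suffices to prove two things: (a) each $I:\langle g_i\rangle$ is almost condensed, and (b) a finite intersection of almost condensed ideals is almost condensed.

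For (a), fix $i$. Since both $I$ and $\langle g_i\rangle$ are condensed, Corollary \ref{intersecting ideals} gives a measure zero subset $A_i\subseteq S$ and a condensed basis $\{f_{i,1},\ldots,f_{i,m_i}\}$ of $(I\cap \langle g_i\rangle)(S-A_i)$, with $f_{i,j}\in\mathscr{R}(S)[x_0,\ldots,x_n]$. Proposition \ref{quotient single} then tells us that $\{f_{i,1}/g_i,\ldots,f_{i,m_i}/g_i\}$ is a condensed basis of $(I:\langle g_i\rangle)(S-A_i')$ for some possibly larger measure zero set $A_i'\supseteq A_i$ (enlarged to absorb the locus where the relevant denominator from $g_i$ vanishes, which is cut out by a polynomial condition and hence real algebraic, so of measure zero in the condensed space $S$). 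Therefore $I:\langle g_i\rangle$ is almost condensed.

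For (b), I would proceed by induction on $k$. Having shown each $I:\langle g_i\rangle$ is almost condensed with condensed basis over $S-A_i'$, let $B = \bigcup_{i=1}^k A_i'$, still of measure zero. On $S-B$, each of the finitely many ideals $(I:\langle g_i\rangle)(S-B)$ is condensed, so Corollary \ref{intersecting ideals} applied iteratively (each intersection is itself condensed on $S-B$ up to a further measure zero set, which we absorb into $B$) produces a condensed basis for $\bigcap_{i=1}^k (I:\langle g_i\rangle)(S-B')$ for some measure zero $B' \supseteq B$. This shows $I:J$ is almost condensed, completing the proof.

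The main obstacle I anticipate is the book-keeping for (b): Corollary \ref{intersecting ideals} is stated for condensed ideals, not almost condensed ones, so one has to verify that restricting to the complement of the (finite) union of the exceptional measure zero sets genuinely puts one in the hypothesis of that corollary, and that each successive application introduces only a further measure zero exceptional locus that can be absorbed. The care with denominators when dividing by $g_i$ in Proposition \ref{quotient single} is also a place where one should check that the zero locus of the relevant coefficient is a real algebraic subset of $S$, and hence of measure zero in the measure topology of the condensed space.
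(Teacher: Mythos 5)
Your proposal is correct and follows essentially the same route as the paper: reduce to principal quotients via $I:J=\bigcap_i I:\langle g_i\rangle$, apply Corollary \ref{intersecting ideals} and Proposition \ref{quotient single} to each piece, and conclude by induction on intersections. Your extra care about absorbing the finitely many measure-zero exceptional sets when iterating Corollary \ref{intersecting ideals} is book-keeping the paper leaves implicit, not a different argument.
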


\begin{proof}
Let $J=<g_1, ..., g_l> $ where $g_1, ..., g_l\in \mathscr{R}(S)[x_1,
..., x_n]$. By Proposition \ref{intersecting ideals}, $I\cap <g_i>$
is almost condensed for all $i$, then by Proposition \ref{quotient
single}, $I:<g_i>$ is almost condensed. Since $I:<g_1,
g_2>=(I:<g_1>)\cap (I:<g_2>)$ which is almost condensed, by
induction, $I:J$ is almost condensed.
\end{proof}

\begin{definition}(Almost primary ideals)
Let $J\subseteq \mathscr{C}[x_0, ..., x_n](S)$ be a condensed ideal
where $S$ is a condensed space. Suppose that for any $f,g\in
\mathscr{R}(S)[x_0, ..., x_n]$, $fg\in_a J$, we have $f\in_a J$ or
$g^k\in_a J$ for some $k\in \N$. Then we say that $J$ is an almost
primary ideal. \label{primary}
\end{definition}

One of the cornerstone in commutative ring theory is that every
ideal in a commutative Noetherian ring has a primary decomposition.
We show that every condensed ideal has an almost primary
decomposition.

\begin{theorem}(Almost primary decomposition)
Every condensed ideal $J\subseteq \mathscr{C}[x_1, ..., x_n](S)$ can
be written as a finite intersection of almost primary ideals where
$S$ is a condensed space.
\end{theorem}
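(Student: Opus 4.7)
The plan is to mimic the classical Lasker--Noether argument, replacing ``Noetherian'' with the almost ascending chain property established in Proposition \ref{ASC} and its corollary, and replacing equality of ideals by almost equality throughout. First I introduce a working notion of \emph{almost irreducible condensed ideal}: a condensed ideal $J\in C(R)$ is almost irreducible if whenever $J=_a I_1\cap I_2$ for condensed ideals $I_1,I_2\supseteq_a J$, one has $J=_a I_1$ or $J=_a I_2$. (By Corollary \ref{intersecting ideals} intersections of condensed ideals are almost condensed, so this notion is self-contained modulo the $=_a$ relation.) The theorem then splits into two parts, exactly as in the commutative algebra textbook proof:

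\medskip
\textbf{(i) Every condensed $J$ is almost equal to a finite intersection of almost irreducible condensed ideals.} Let $\Sigma\subseteq C(R)$ be the collection of condensed ideals \emph{not} admitting such a finite almost decomposition. I want $\Sigma=\emptyset$. Suppose otherwise; the almost ascending chain property on $C(R)$ (Corollary following Proposition \ref{ASC}) lets me choose an element $J\in\Sigma$ that is almost maximal in $\Sigma$: if $J\subsetneq_a J'$ in $\Sigma$ gave a properly ascending chain, the chain would stabilize. By its membership in $\Sigma$, $J$ itself is not almost irreducible, so $J=_a I_1\cap I_2$ with $I_1,I_2\supsetneq_a J$ condensed. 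By almost maximality, $I_1,I_2\notin\Sigma$, so each is a finite almost intersection of almost irreducible condensed ideals; combining gives such a decomposition for $J$, contradicting $J\in\Sigma$.

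\medskip
\textbf{(ii) Every almost irreducible condensed ideal $J$ is almost primary.} Let $f,g\in\mathscr{R}(S)[x_1,\dots,x_n]$ with $fg\in_a J$ and $f\notin_a J$; I must show $g^k\in_a J$ for some $k$. Consider the chain of ideal quotients
$$J:\langle g\rangle\ \subseteq_a\ J:\langle g^2\rangle\ \subseteq_a\ J:\langle g^3\rangle\ \subseteq_a\ \cdots.$$
By Proposition \ref{quotient ideal} each term is almost condensed, so the almost ascending chain property yields an $n$ with $J:\langle g^n\rangle=_a J:\langle g^{n+1}\rangle$. I then claim
$$J\ =_a\ \bigl(J+\langle f\rangle\bigr)\ \cap\ \bigl(J+\langle g^n\rangle\bigr).$$
The inclusion $\subseteq_a$ is trivial. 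For $\supseteq_a$, any element $h$ of the right side almost satisfies $h=a+bf=a'+cg^n$ with $a,a'\in_a J$. Multiplying the second expression by $g$ and using $fg\in_a J$ gives $cg^{n+1}\in_a J$, i.e.\ $c\in_a J:\langle g^{n+1}\rangle=_a J:\langle g^n\rangle$, so $cg^n\in_a J$ and hence $h\in_a J$. Almost irreducibility of $J$ combined with $f\notin_a J$ forces $J=_a J+\langle g^n\rangle$, i.e.\ $g^n\in_a J$, as required.

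\medskip
Combining (i) and (ii) gives the desired almost primary decomposition. The main obstacle I anticipate is bookkeeping for the measure-zero sets: each application of ``almost ACC,'' each use of Corollary \ref{intersecting ideals}, and each invocation of Proposition \ref{quotient ideal} discards a real-algebraic (measure-zero) exceptional set, and in Step (ii) the algebraic identity $J=(J+\langle f\rangle)\cap(J+\langle g^n\rangle)$ must be verified on the complement of the union of all these sets. That union remains of measure zero because only countably many exceptional sets appear (the decomposition in (i) is finite, and the chain in (ii) stabilizes after finitely many steps), so the $=_a$ relation is preserved and the argument goes through.
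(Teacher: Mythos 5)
Your proposal is correct and follows essentially the same route as the paper: decompose $J$ into finitely many almost irreducible condensed ideals via the almost ascending chain property, then show an almost irreducible condensed ideal is almost primary by stabilizing the quotient chain $J:\langle g\rangle\subseteq_a J:\langle g^2\rangle\subseteq_a\cdots$ and verifying $J=_a(J+\langle f\rangle)\cap(J+\langle g^n\rangle)$. The only difference is that you spell out step (i), which the paper dismisses as ``not difficult,'' and you are right to flag the measure-zero bookkeeping, which the paper leaves implicit.
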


\begin{proof}
We say that a Groebner ideal $J$ is almost irreducible if $J=_a
J_1\cap J_2$ for some condensed ideals $J_1, J_2$, then either $J=_a
J_1$ or $J=_a J_2$. By using the almost ascending chain property of
$C(R)$, it is not difficult to show that every condensed ideal is an
intersection of finitely many almost irreducible condensed ideals.
The second step is to show that an almost irreducible condensed
ideal is almost primary. Suppose that $J$ is an almost irreducible
condensed ideal, $f, g\in \mathscr{R}(S)[x_1, ..., x_n]$ and $fg
\in_a J$. We assume that $f\notin_a J$. There is an ascending chain:
$$J:g\subseteq_a J:g^2 \subseteq_a \cdots$$
By the almost ascending chain property of $C(R)$, there is $N\in \N$
such that $J:g^N=_a J:g^{N+1}$. For $a+h_1g^N= b+h_2f$ where $a,
b\in_a J, h_1, h_2\in_a R$, we have $ga+h_1g^{N+1}=bg+h_2fg\in_a J$.
Hence $h_1g^{N+1}\in_a J$ which implies that $h_1\in_a
J:g^{N+1}=J:g^N$, so we have $h_1g^N\in_a J$. This tells us that
$(J+<g^N>)\cap (J+<f>)=_a J$. Since $J$ is almost irreducible and
$f\notin_a J$, we have $J=_a J+<g^N>$. Hence $g^N\in_a J$.
\end{proof}

\section{Semi-topological Zariski topology}
In this section, we consider a condensed space $S$ with its measure
topology, i.e., closed subsets of $S$ are countably intersection of
countably union of real algebraic subsets of $S$. Such sets are said
to have measure zero. We remind the reader that $\mathscr{C}(S)$ is
the ring of all complex-valued continuous functions from $S$ to $\C$
under the Euclidean topology of $S$.

\begin{definition}
Let $\mathscr{F}$ be a presheaf on a sigma space $S$. If there is a
set $A\subseteq S$ of measure zero such that $\mathscr{F}|_{S-A}$ is
a sheaf, then $\mathscr{F}$ is called a strong almost sheaf.
\end{definition}

\begin{remark}
It is clear that on a condensed space, a strong almost sheaf is an
almost sheaf.
\end{remark}

\begin{proposition}
If $\X\subseteq \A^n_S$ is an affine semi-topological algebraic set
over $S$, then $K[\X]$ is a strong almost sheaf on $S$.
\end{proposition}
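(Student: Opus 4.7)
The plan is to produce a measure-zero subset $A \subseteq S$ such that on $S-A$, with the induced measure topology, the presheaf $K[\X]$ satisfies both sheaf axioms. Since $\X$ is a semi-topological algebraic set, $\X = V(J)$ for some condensed ideal $J \subseteq \mathscr{C}[x_1,\ldots,x_n](S)$; by the Almost Hilbert Nullstellensatz together with the theorem that $\sqrt{J}$ is almost condensed, the vanishing ideal satisfies $I(\X) =_a IV(J) =_a \sqrt{J}$ and is itself almost condensed, hence almost Groebner. I choose $A$ so that $I(\X)(S-A)$ is a Groebner ideal with Groebner basis $G=\{g_1,\ldots,g_l\}$ whose leading coefficients $LC(g_i)$ are nonvanishing on $S-A$; by the proposition that an almost Groebner presheaf is an almost Groebner sheaf, $I(\X)|_{S-A}$ is already a sheaf of ideals.

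The workhorse on $S-A$ is the continuous division algorithm, Lemma \ref{algorithms}(1): every $f \in \CO[x_1,\ldots,x_n](U)$ with $U \subseteq S-A$ open admits a decomposition $f = \sum a_i\, g_i|_U + r$ whose remainder $r$ has no monomial divisible by any $LT(g_i)$, and by Lemma \ref{algorithms}(4), such a normal form $r$ lies in $I(\X)(U)$ only if $r = 0$. Uniqueness for $K[\X]|_{S-A}$ follows at once: if $\bar f \in K[\X](U)$ restricts to $0$ on each member of a cover $\{U_i\}$ of $U$, the normal form $r$ of $f$ satisfies $r|_{U_i} = 0$, and the sheaf property of $\mathscr{C}$ (coefficient by coefficient) forces $r = 0$ on $U$, so $\bar f = 0$. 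For gluing, given compatible $\bar f_i \in K[\X](U_i)$, I replace each $f_i$ by its normal form $r_i$; on overlaps, $r_i - r_j$ is a normal form lying in $I(\X)(U_i \cap U_j)$, hence identically zero, so the coefficient functions of the $r_i$ are compatible and glue via $\mathscr{C}$ to continuous functions on $U$, producing a candidate section $r$.

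The subtle step, and the one I expect to be the main obstacle, is verifying that the glued object $r$ actually has finite monomial support and hence defines an element of $\CO[x_1,\ldots,x_n](U)$ — precisely the pathology that prevents $\CO[x_1,\ldots,x_n]$ from being a sheaf in general. The remedy exploits the condensed hypothesis together with the coarseness of the measure topology: any non-empty measure-topology open set in $S-A$ is the complement of a measure-zero set and is therefore Euclidean-dense, so every pairwise intersection $U_i \cap U_j$ is Euclidean-dense in each $U_i$. Continuity of the coefficient functions of the $r_i$ then forces the support of $r_i$ and $r_j$ to coincide wherever two cover members meet, yielding a single finite monomial support common to every $r_i$ and hence a genuine polynomial $r$. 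The class $\bar r \in K[\X](U)$ restricts to $\bar f_i$ on each $U_i$ by construction, completing the verification that $K[\X]|_{S-A}$ is a sheaf and establishing the strong almost sheaf property.
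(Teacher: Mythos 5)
Your proof is correct and follows essentially the same route as the paper's: pass to $S-A$ where $I(\X)$ has a Groebner basis, use the continuous division algorithm to reduce sections to normal forms, and glue/compare those normal forms via the sheaf $\mathscr{C}$. The only (welcome) difference is that you make explicit the finite-monomial-support issue via density of measure-open sets, a point the paper disposes of by simply asserting that $\mathscr{C}[x_1,\ldots,x_n]$ is a sheaf in the measure topology, and you run uniqueness through normal forms rather than through the almost-sheaf property of $I(\X)$; both are sound.
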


\begin{proof}
Since $I(\X)$ is an almost Groebner sheaf, there is a set
$A\subseteq S$ of measure zero such that $I(\X)|_{S-A}$ is a
Groebner ideal sheaf. Let $g_1, ..., g_k$ be a Groebner basis of
$I(\X)(S-A)$. To prove the extension, let $U$ be an open set,
$U-A=\cup_i U_i$ and $\sigma_i\in K[\X](U_i)$ for each $i$ such that
$\sigma_i|_{U_i\cap U_j}=\sigma_j|_{U_j\cap U_i}$. Let
$\sigma_i=f_i+I(\X)(U_i)$. Write $f_i=\sum_ja^i_jg_j|_{U_i}+r_i$
where $a^i_j, r_i\in \mathscr{C}[x_1, ..., x_n](U_i)$ and each term
of $r_i$ is not divisible by any of $LT(g_i|_{U_i})$. Since
$f_i|_{U_i\cap U_j}-f_j|_{U_j\cap U_i}\in I(\X)(U_i\cap U_j)$, we
have $r_i|_{U_i\cap U_j}=r_j|_{U_j\cap U_i}$. Hence there is $r\in
\mathscr{C}[x_1, ..., x_n](U-A)$ such that $r|_{U_i}=r_i$, and
$(r+I(\X)(U-A))|_{U_i}=\sigma_i$ for each $i$. To prove the
uniqueness, let $\sigma\in K[\X](U)$ and suppose there is an open
covering $U=\cup_i U_i$ such that $\sigma|_{U_i}=0$ for all $i$. Let
$\sigma|_{U_i}=f_i+I(\X)(U_i)$. Then $f_i\in I(\X)(U_i)$ and
$f_i|_{U_i\cap U_j}=f_j|_{U_j\cap U_i}$ for any $i, j$. Since
$I(\X)$ is an almost sheaf, there is $f\in I(\X)(U-A)$ such that
$f|_{U_i-A}=f_i|_{U_i-A}$ for all $i$. Thus
$\sigma|_{U-A}=f|_{U-A}+I(\X)(U-A)=I(\X)(U-A)$.
\end{proof}

\begin{definition}(Associated sheaves)
Suppose that $S$ is a sigma space and $\mathscr{F}$ is an almost
sheaf on $S$. For $\alpha\in_a \mathscr{F}(U)$, write
$$[\alpha]=\{\beta|\beta \in_a \mathscr{F}(V)\mbox{ for some $V$ and }
\beta=_a \alpha\}$$

Define
$$\mathscr{AF}(U):=\{[\alpha]|\alpha\in_a \mathscr{F}(U)\}$$
\end{definition}

\begin{proposition}
Suppose that $\mathscr{F}$ is an almost sheaf of rings on a sigma
space $S$. For $[\alpha], [\beta]\in \mathscr{AF}(U)$, $\alpha\in
\mathscr{F}(W), \beta\in \mathscr{F}(V)$, define
$$[\alpha]+[\beta]:=[\alpha|_{W\cap V}+\beta|_{W\cap V}]$$
$$[\alpha][\beta]:=[\alpha|_{W\cap V}\beta|_{W\cap V}]$$
And define restriction maps $[\alpha]|_{U'}:=[\alpha|_{U'\cap W}]$
where $U'\subseteq U$. Then $\mathscr{AF}$ is a sheaf of rings on
$S$.
\end{proposition}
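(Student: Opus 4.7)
The plan is to check, in order, four things: (i) well-definedness of addition, multiplication, and restriction on the equivalence classes; (ii) inheritance of the ring axioms from $\mathscr{F}$; (iii) the sheaf uniqueness axiom for $\mathscr{AF}$; and (iv) the sheaf gluing axiom for $\mathscr{AF}$. Throughout the argument I will repeatedly use that $S$ is a sigma space, so that the union of finitely (or countably) many measure zero sets arising at intermediate steps is again of measure zero; this is the bookkeeping device that lets the almost sheaf axioms of $\mathscr{F}$ be upgraded to genuine sheaf axioms of $\mathscr{AF}$.

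For (i), suppose $\alpha =_a \alpha'$ off a measure zero set $A$ and $\beta =_a \beta'$ off a measure zero set $B$. On the intersection of the representatives' domains with the complement of $A\cup B$ both the sums and the products coincide, so the induced class does not depend on representatives; the same argument shows that $[\alpha]|_{U'}$ is independent of the chosen representative. For (ii), each ring identity already holds in $\mathscr{F}$ on the common restriction of its inputs, so it descends to $\mathscr{AF}$ once we pass to equivalence classes.

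For (iii), let $[\sigma]\in\mathscr{AF}(U)$ be represented by $\sigma\in\mathscr{F}(U-A)$ for some measure zero $A$, and assume $[\sigma]|_{U_i}=0$ for every $U_i$ in an open cover $\{U_i\}$ of $U$. Then $\sigma|_{U_i-A} =_a 0$ for each $i$, and the almost uniqueness axiom of $\mathscr{F}$, applied to the cover $\{U_i-A\}$ of $U-A$, yields $\sigma =_a 0$; hence $[\sigma]=0$ in $\mathscr{AF}(U)$.

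For (iv), given compatible classes $[\sigma_i]\in\mathscr{AF}(U_i)$, choose representatives $\sigma_i\in\mathscr{F}(U_i-A_i)$ with $A_i$ of measure zero. The hypothesis $[\sigma_i]|_{U_i\cap U_j}=[\sigma_j]|_{U_i\cap U_j}$ translates to $\sigma_i|_{U_i\cap U_j}=_a\sigma_j|_{U_j\cap U_i}$, which is exactly the hypothesis of the almost extension axiom for $\mathscr{F}$. That axiom then delivers $\sigma\in_a\mathscr{F}(U)$ with $\sigma|_{U_i}=_a\sigma_i$ for all $i$, and the class $[\sigma]\in\mathscr{AF}(U)$ glues the given family. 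The main (and essentially only) obstacle is the careful bookkeeping of the exceptional sets at each step, ensuring that their accumulated union is still of measure zero; the sigma space hypothesis on $S$ and the very design of the equivalence relation $=_a$ make this transparent, so that the sheaf axioms for $\mathscr{AF}$ reduce cleanly to the almost sheaf axioms for $\mathscr{F}$.
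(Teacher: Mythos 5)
The paper states this proposition without proof, so there is nothing to compare against; your verification (well-definedness of the operations on classes, descent of the ring axioms, and reduction of the two sheaf axioms for $\mathscr{AF}$ to the two almost-sheaf axioms for $\mathscr{F}$) is exactly the intended argument and is essentially correct. The one place that deserves more care is the gluing step: your representatives live in $\mathscr{F}(U_i-A_i)$, and for an \emph{uncountable} cover the union $\bigcup_i A_i$ need not have measure zero, so you cannot simply discard all the exceptional sets at once before invoking the almost extension property. The clean fix is to apply that axiom directly to the open cover $\{U_i-A_i\}$ of $U'=\bigcup_i(U_i-A_i)$ and then argue that the resulting class lies in $\mathscr{AF}(U)$ (e.g.\ by passing to a countable subcover, or by noting $U-U'\subseteq\bigcup_i A_i$ in the situations where that union is still of measure zero); this wrinkle is inherited from the paper's definitions rather than being a flaw in your approach.
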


The sheaf $\mathscr{AF}$ is called the sheaf associated to
$\mathscr{F}$.

\begin{definition}
Suppose that $\mathscr{F}, \mathscr{G}$ are two almost sheaves on a
sigma space $S$. Let $\varphi:\mathscr{F} \rightarrow \mathscr{G}$
be a morphism, i.e., a morphism of presheaves.
\begin{enumerate}
\item We say that $\varphi$ is almost injective if $\sigma
\in \mathscr{F}(U)$ and $\varphi_U(\sigma)=_a 0$, then $\sigma
=_a 0$.

\item We say that $\varphi$ is almost surjective if for any
$\beta\in \mathscr{G}(U)$, there is an open covering $U=\cup_i
U_i$ and $\alpha_i\in \mathscr{F}(U_i)$ such that
$\varphi_{U_i}(\alpha_i)=_a \beta|_{U_i}$.
\end{enumerate}

If $\varphi$ is almost injective and almost surjective, then we say
that $\varphi$ is an almost isomorphism and say that $\mathscr{F},
\mathscr{G}$ are almost isomorphic, denoted by $\mathscr{F}\cong_a
\mathscr{G}$. If $\varphi: \mathscr{F} \rightarrow \mathscr{G}$ is a
morphism between almost sheaves, then it induces a morphism
$\mathscr{A}\varphi:\mathscr{AF} \rightarrow \mathscr{AG}$ between
the associated sheaves defined by
$$(\mathscr{A}\varphi)_U[\alpha]:=[\varphi_U(\alpha)]$$
\end{definition}

It is not difficult to see that $\varphi$ is almost injective,
surjective, isomorphic if and only if $\mathscr{A}\varphi$ is
injective, surjective, isomorphic respectively. We note that if
$\varphi$ is almost isomorphic, in the level of almost sheaves,
there may no exist an inverse of $\varphi$. But the inverse of
$\mathscr{A}\varphi$ exists.

\begin{remark}
From the definition, we see that $\mathscr{AF}(U)=\mathscr{AF}(U-A)$
where $A$ is a set of measure zero.
\end{remark}

\begin{definition}
Let $S$ be a condensed space. If $I\subseteq \mathscr{C}[x_1, ...,
x_n](S)$ is a condensed ideal, the set $V(I)\subseteq \A^n_{S}$ is
called an affine semi-topological algebraic set. The almost sheaf
$K[\X]:=\mathscr{C}[x_1, ..., x_n]/I(\X)$ on $S$ is called the
coordinate almost sheaf of $\X$. Call $\mathscr{A}K[\X]$ the
coordinate sheaf of $\X$. An element $\sigma$ of $K[\X](U)$ is said
to be condensed if there is a condensed element $g\in
\mathscr{C}[x_1, ..., x_n](U)$ such that $\sigma=g+I(\X)(U)$. And an
element $[\sigma]\in \mathscr{A}K[\X](U)$ is said to be condensed if
it can be represented by some condensed element in $K[\X](U)$.
\end{definition}

\begin{definition}(Topology for affine semi-topological algebraic sets)
Let $\X\subseteq \A^n_S$ be an affine semi-topological algebraic set
over $S$. For an open set $U\subseteq S$, $f \in K[\X](U)$
condensed, the principal open set defined by $f$ is the set
$$D(f):=\X(U)-Z(f)$$ where $Z(f):=\{(s, x)\in \X(U)|f_s(x)=0\}$
is the zero set of $f$ over $U$. Then the collection of all sets of
the form $D(f)$ forms a basis of a topology on $\X$. We call the
topology generated by this basis the semi-topological Zariski
topology of $\X$.
\end{definition}

\subsection{Sheaves of semi-topological modules}

\begin{definition}(Affine localization)
Given an affine semi-topological algebraic set $\X$ over $S$ and a
presheaf $M$ of $K[\X]$-modules on $S$. Let $U\subseteq S$ be an
open set. For $p\in \pi^{-1}(U)$, let
$$m_{[p]}(U):=\{f\in K[\X](U)|f(p)\neq 0, f \mbox{ condensed}\}$$ then $m_p(U)$ is a
multiplicative system. Define
$$M_{[p]}(U):=(m_{[p]}(U))^{-1}M(U)$$
the localization of $M(U)$ with respect to $m_{[p]}(U)$.

Then $M_{[p]}$ is a presheaf on $S$ which is called the localization
of $M$ at $p$.
\end{definition}

We write $(M_{[p]})_{\pi(p)}$ for the stalk of $M_{[p]}$ at
$\pi(p)\in S$.

\begin{definition}(Bivariant sheaves)
Let $\X$ be a semi-topological scheme over $S$. A bivariant sheaf
$\mathscr{F}$ on $\X$ is an assignment such that
\begin{enumerate}
\item
$\mathscr{F}(U)$ is a sheaf for all $U$ open in $\X$. Let
$r^U_{VW}:\mathscr{F}(U)(V) \rightarrow \mathscr{F}(U)(W)$ be
the restriction map where $W\subseteq V$ are open sets in
$\pi(U)$.

\item
For any $U'\subseteq U$ open sets of $\X$, there exists
restriction map $r^{UU'}_V:\mathscr{F}(U)(V) \rightarrow
\mathscr{F}(U')(V)$ such that the following diagram commutes:
$$\xymatrix{ \mathscr{F}(U)(V) \ar[r]^{r^U_{VW}} \ar[d]_{r^{UU'}_V}
& \mathscr{F}(U)(W) \ar[d]^{r^{UU'}_W}\\
\mathscr{F}(U')(V) \ar[r]^{r^{U'}_{VW}} & \mathscr{F}(U')(W)}$$
where $W\subseteq V$ are open sets in $\pi(U')$.
\end{enumerate}
\end{definition}

\begin{definition}
Suppose that $M$ is a presheaf of $K[\X]$-modules on $S$. Let
$V\subseteq \X$, $U\subseteq S$ be open sets. Let
$\widetilde{M}(V)(U)$ be the collection of all functions
$\sigma:V\cap \pi^{-1}(U) \rightarrow \coprod_{p\in V\cap
\pi^{-1}(U)}(M_{[p]})_{\pi(p)}$ such that each of them satisfies the
following conditions
\begin{enumerate}
\item $\sigma(p)\in (M_{[p]})_{\pi(p)}$ for each $p\in V\cap \pi^{-1}(U)$;

\item there exists open covering $V\cap \pi^{-1}(U)=\cup_i W_i$, $f_i\in K[\X](\pi(W_i)) \mbox{ condensed }, 0\notin f_i(W_i), a_i\in
M(\pi(W_i))$ such that $\sigma(q)=\frac{a_i}{f_i}\in
M_{[q]}(\pi(W_i))$ if $q\in W_i$.
\end{enumerate}
We call $\widetilde{M}$ the presheaf of sheaves on $\X$ associated
to $M$. If $M$ is an almost sheaf of $K[\X]$-modules, then we call
$\widetilde{M}$ the almost sheaf of sheaves on $\X$ associated to
$M$. If $M$ is a sheaf, then $\widetilde{M}$ is a bivariant sheaf.
\end{definition}

\begin{definition}(Structure sheaves)
Let $\X$ be an affine semi-topological algebraic set over $S$.
Define $\mathcal{O}_{\X}:=\widetilde{\mathscr{A}K[\X]}$ which is
called the bivariant structure sheaf of $\X$. If $M$ is an almost
sheaf of $K[\X]$-modules, then $\widetilde{\mathscr{A}M}$ is called
the bivariant sheaf of $\mathcal{O}_{\X}$-modules associated to $M$.
\end{definition}

\begin{remark}
All presheaves on a condensed space are almost sheaves. We have the
following two functors:
$$\mathscr{A}:Cat(Ash K[\X]-mod) \rightarrow
Cat(\mathscr{A}K[\X]-mod)$$
$$\widetilde{ }:Cat(\mathscr{A}K[\X]-mod) \rightarrow Cat(BSh
\mathcal{O}_{\X}-mod)$$ where $Cat(Ash K[\X]-mod)$ is the category
of almost sheaves of sheaves of $K[\X]$-modules,
$Cat(\mathscr{A}K[\X]-mod)$ is the category of sheaves of
$\mathscr{A}K[\X]$-modules, and $Cat(BSh \mathcal{O}_{\X}-mod)$ is
the category of bivariant sheaves of $\mathcal{O}_{\X}$-modules.
\end{remark}

\begin{definition}(Almost Noetherian space)
Let $S$ be a condensed space and $\pi:\X \rightarrow S$ a space over
$S$. The space $\X$ is said to be an almost Noetherian space if for
any chain of open sets $U_1\subseteq U_2 \subseteq \cdots $, there
is a set $A\subseteq S$ of measure zero and a number $N$ such that
$U_N-\pi^{-1}(A)=U_{N+k}-\pi^{-1}(A)$ for all $k>0$.
\end{definition}

\begin{proposition}
Suppose that $\X\subseteq \A^n_S$ is an affine semi-topological
algebraic set over $S$, then $\X$ is almost Noetherian.
\end{proposition}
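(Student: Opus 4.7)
The plan is to reduce the statement to the almost ascending chain property for condensed ideals (Proposition~\ref{ASC} and its corollary). Given a chain $U_1 \subseteq U_2 \subseteq \cdots$ of open sets of $\X$, set $Z_i := \X \setminus U_i$ and let $J_i \subseteq \mathscr{C}[x_1,\ldots,x_n](S)$ be the condensed ideal generated by all condensed elements $\tilde f \in \mathscr{R}(S)[x_1,\ldots,x_n]$ that almost vanish on $Z_i$, i.e., vanish on $Z_i \setminus \pi^{-1}(B)$ for some measure-zero $B \subseteq S$. Since $Z_1 \supseteq Z_2 \supseteq \cdots$, the $J_i$ form an ascending chain of condensed ideals, and the almost ACC yields an index $N$ and a measure-zero set $A_0 \subseteq S$ with $J_N(S-A_0) = J_{N+k}(S-A_0)$ for every $k \ge 0$.

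The technical heart of the argument is to show $V(J_i) \cap \X =_a Z_i$. For the inclusion $Z_i \subseteq V(J_i) \cap \X$ up to measure zero in the base, I would invoke Corollary~\ref{finite generation condensed ideals}: a condensed ideal is finitely generated, so the finitely many measure-zero exceptional sets of its generators can be united. For the reverse containment, given $p \in U_i$ pick a basic open $D(g) \subseteq U_i$ with $p \in D(g)$, where $g \in K[\X](V)$ is condensed over some open $V \subseteq S$ containing $\pi(p)$. Writing $g = \tilde p / \tilde q$ with $\tilde p \in \mathscr{P}(V)[x_1,\ldots,x_n]$ and $\tilde q \in \mathscr{P}(V)$ non-vanishing on $V$, I view $\tilde p$ as an element of $\mathscr{P}(S)[x_1,\ldots,x_n]$ by canonical extension (legitimate because $S\setminus V$ is a proper closed set in the measure topology and hence of measure zero, so $V$ is dense). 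From $Z_i \cap \pi^{-1}(V) \subseteq Z(g) \cap \pi^{-1}(V) = Z(\tilde p) \cap \pi^{-1}(V)$ one sees that $\tilde p$ almost vanishes on $Z_i$, while $\tilde p(p) = g(p)\,\tilde q(\pi(p)) \neq 0$, so $\tilde p \in J_i$ separates $p$ from $V(J_i)$.

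Combining these pieces, for each $k \ge 0$ there is a measure-zero set $A_k' \supseteq A_0$ outside of which both $V(J_N) \cap \X = Z_N$ and $V(J_{N+k}) \cap \X = Z_{N+k}$ hold. Setting $A := \bigcup_{k \ge 0} A_k'$, which remains of measure zero since a sigma space is closed under countable unions of its distinguished sets, we get over $S - A$ the chain of equalities $Z_N = V(J_N) \cap \X = V(J_{N+k}) \cap \X = Z_{N+k}$. Taking complements in $\X \setminus \pi^{-1}(A)$ yields $U_N \setminus \pi^{-1}(A) = U_{N+k} \setminus \pi^{-1}(A)$ for every $k \ge 0$, which is exactly the almost Noetherian condition.

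The main obstacle is the second paragraph: the dictionary between condensed ideals of $\mathscr{C}[x_1,\ldots,x_n](S)$ and closed subsets of $\X$ in the semi-topological Zariski topology. Basic opens $D(g)$ arise from condensed sections over variable open subsets $V \subseteq S$, whereas condensed ideals are built from global condensed elements in $\mathscr{R}(S)[x_1,\ldots,x_n]$. The bridge between the two descriptions hinges on the measure-topological feature that every non-empty open subset of the condensed base has measure-zero complement, so local condensed data extends to global condensed data incurring only a measure-zero discrepancy.
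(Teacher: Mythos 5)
Your proof is correct, but it takes a genuinely different route from the paper's. The paper reduces an arbitrary ascending chain of opens to a chain of finite unions of principal open sets $D(f_i)$ (asserting that every semi-topological Zariski open is a countable union of principal opens), converts that into an ascending chain of ideals $\langle f_1,\ldots,f_k\rangle$ in $K[\X]$, and invokes an almost ascending chain property for $K[\X]$ obtained by pulling ideals back along the quotient $\mathscr{C}[x_1,\ldots,x_n]\to K[\X]$ to almost Groebner ideals. You instead run the classical ``Noetherian ring implies Noetherian topology'' argument through vanishing ideals of the complements: you attach to each $Z_i=\X\setminus U_i$ the condensed ideal $J_i$ generated by condensed elements almost vanishing on it, apply the almost ACC for condensed ideals (Proposition \ref{ASC} and its corollary), and recover $Z_i$, up to a measure-zero set in the base, as $V(J_i)\cap\X$. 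The delicate containment $V(J_i)\cap\X\subseteq Z_i$ is handled correctly: a basic open $D(g)\subseteq U_i$ around $p\in U_i$ yields, after clearing denominators and extending the polynomial coefficients from $V$ to $S$ (harmless since $S\setminus V$ is a proper measure-closed set and hence of measure zero), a generator of $J_i$ nonvanishing at $p$; the reverse containment correctly uses Corollary \ref{finite generation condensed ideals} to consolidate the exceptional sets of the generators into one. Your approach buys two things: it treats arbitrary chains of opens directly, so it does not need the paper's unargued countability claim for covers by principal opens, and it uses only the elementary $V$--$I$ adjunction rather than any Nullstellensatz. The paper's approach, in exchange, stays entirely inside the ideal theory of $K[\X]$ and is shorter once the reduction to principal opens is granted.
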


\begin{proof}
Let $P:\mathscr{C}[x_1, ..., x_n] \rightarrow K[\X]$ be the natural
quotient map. First we note that $K[\X]$ has the almost ascending
chain property, i.e., for any open set $U\subseteq S$, if
$$I_1\subseteq I_2 \subseteq \cdots $$ is a chain of condensed ideals of $K[\X](U)$, since $I(\X)(U)$ is almost
Groebner,
$$P^{-1}(I_1) \subseteq P^{-1}(I_2) \subseteq P^{-1}(I_3) \subseteq
\cdots $$ is a chain of almost Groebner ideals in $\mathscr{C}[x_1,
..., x_n](U)$, hence by the almost ascending chain property, the
chain almost stabilizes at some $N$ and so is the chain in
$K[\X](U)$.

If
$$D(f_1) \subseteq \cup^2_{i=1} D(f_i) \subseteq \cup^3_{i=1}D(f_i)
\subseteq \cdots$$ is a chain of open sets in $\X$ where $f_i\in K[\X](S-A_i)$ where $A_i$ is of measure zero, we get a chain
$$<f_1|_{S-A}>\subseteq <\{f_i|_{S-A}\}^2_{i=1}>\subseteq
<\{f_i|_{S-A}\}^3_{i=1}>\subseteq \cdots $$ in $K[\X](S-A)$ where
$A=\cup_i A_i$. Then by the almost ascending chain property of
$K[\X]$, the sequence almost stabilizes at some $N$.

Since the semi-topological Zariski topology of $\X$ is generated by
sets of the form $D(f_i)$, an open set $U\subseteq \X$ can be
written as
$$U=\cup^{\infty}_{i=1}D(f_i)$$ for countably many $f_i$. Then
$$D(f_1) \subseteq \cup^2_{i=1}D(f_i) \subseteq \cup^3_{i=1}D(f_i)
\subseteq \cdots $$ and hence $U=_a \cup^N_{i=1}D(f_i)$ for some $N$.

Hence without loss of generality, it is enough to consider chains of
open sets of $\X$ of the form
$$D(f_1) \subseteq \cup^2_{i=1}D(f_i) \subseteq \cup^3_{i=1}D(f_i)
\subseteq \cdots $$ but then by previous argument, we know that this sequence almost stabilizes at some $N$. Hence $\X$ is almost Noetherian.
\end{proof}

\begin{proposition}
Let $\X$ be an affine semi-topological algebraic set over a
condensed space $S$ and $M$ an almost sheaf of $K[\X]$-modules. Then
\begin{enumerate}
\item For $p\in \X$, the stalk $(\widetilde{M})_{p}$ is isomorphic to the stalk $(M_{[p]})_{\pi(p)}$ as a
$(K[\X]_{[p]})_{\pi(p)}$-module.

\item If $f\in K[\X]$ is condensed, then $M_f\cong_a \widetilde{M}(D(f))$ as almost sheaves of $K[\X]_f$-modules.

\item In particular, $M\cong_a \widetilde{M}(\X)$ as almost sheaves of $K[\X]$-modules.
\end{enumerate}\label{module principal open set}
\end{proposition}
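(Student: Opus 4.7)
The plan is to mimic the classical identification between $M_f$ and $\widetilde{M}(D(f))$ for quasi-coherent sheaves on an affine scheme, systematically replacing equalities with almost equalities and using the almost Noetherian property of $\X$ (established in the preceding proposition) as the substitute for the standard Noetherian arguments. Part (3) is the special case $f=1$ of part (2), and part (1) will be the local input needed to prove (2).

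For part (1), the map $\Phi:(\widetilde{M})_p \to (M_{[p]})_{\pi(p)}$ should send the germ of a local section $\sigma$ around $p$ to its value $\sigma(p)$. For surjectivity, any element of $(M_{[p]})_{\pi(p)}$ is represented as $a/g$ with $a \in M(V)$, $g \in K[\X](V)$ condensed, $g(p)\ne 0$ on some open $V \ni \pi(p)$; on the open set $W = D(g) \cap \pi^{-1}(V) \ni p$ the formula $q \mapsto a/g$ gives a local section of $\widetilde{M}$ whose value at $p$ is the prescribed germ. For injectivity, if two local sections $\sigma_1, \sigma_2$ agree as germs at $p$, then after shrinking we may assume both are given by uniform formulas $a_i/g_i$ on a common neighborhood $W$ of $p$, and agreement at $p$ in $(M_{[p]})_{\pi(p)}$ means $a_1 g_2 - a_2 g_1$ is killed by some condensed $h \in m_{[p]}(\pi(W))$; hence on $W \cap D(h)$ the two sections coincide pointwise, so they have the same germ at $p$. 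Bijectivity of $\Phi$, and compatibility with the module structure over $(K[\X]_{[p]})_{\pi(p)}$, are immediate from the definitions.

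For part (2), define $\psi: M_f \to \widetilde{M}(D(f))$ on an open $U \subseteq S$ by sending $a/f^n \in M_f(U)$ to the section $q \mapsto a/f^n \in (M_{[q]})_{\pi(q)}$ on $D(f) \cap \pi^{-1}(U)$; this is well-defined and $K[\X]_f$-linear. Almost injectivity: if $\psi(a/f^n)$ is almost zero on $D(f) \cap \pi^{-1}(U)$, then by part (1), at each $q$ off a measure-zero set there exists a condensed $g_q$ with $g_q(q)\ne 0$ and $g_q a = 0$ in $M(\pi(W_q))$ on some neighborhood $W_q$. By almost Noetherianness of $\X$, the open cover of $D(f)\cap \pi^{-1}(U)$ by the sets $D(g_q)$ has a finite subcover up to a measure zero set, so finitely many condensed $g_1,\dots,g_r$ with $g_i a = 0$ in $M$ cover $D(f)\cap\pi^{-1}(U)$; then $f$ lies in $\sqrt{\langle g_1,\dots,g_r\rangle}$ almost everywhere on $U$ by the almost Hilbert Nullstellensatz, so $f^k a = 0$ almost for some $k$, i.e.\ $a/f^n =_a 0$ in $M_f$. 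Almost surjectivity: given $\sigma\in \widetilde{M}(D(f))(U)$, the defining condition provides a cover $D(f)\cap\pi^{-1}(U) = \bigcup W_i$ with $\sigma|_{W_i}= a_i/f_i$ for condensed $f_i$ nonvanishing on $W_i$; almost Noetherianness reduces this to a finite cover $W_1,\dots,W_r$ up to measure zero. On overlaps $W_i \cap W_j$ the germs agree, so by part (1) there are condensed $h_{ij}$, nonvanishing on a dense open of $W_i\cap W_j$, with $h_{ij}(f_j a_i - f_i a_j)=0$; after replacing $f_i$ by $f_i \prod_j h_{ij}$ and $a_i$ correspondingly, we may assume the $f_j a_i = f_i a_j$ identities hold in $M$. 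Since the $W_i$ cover $D(f)\cap\pi^{-1}(U)$ almost, $f^N$ lies almost in $\langle f_1,\dots,f_r\rangle$ for some $N$, say $f^N = \sum b_i f_i$, and then $a := \sum b_i a_i$ satisfies $f_j a = f^N a_j$ in $M$, so $a/f^N$ is a preimage of $\sigma$ under $\psi$ on each $W_j$, hence almost globally.

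The main obstacle is the reduction step in part (2) that passes from an arbitrary open cover of $D(f)\cap \pi^{-1}(U)$ by basic opens to a finite one, and then patches local fractions $a_i/f_i$ into a single fraction $a/f^N$ inside $M$ itself rather than just in its stalks. Classically this is handled by ordinary Noetherianness of $\mathrm{Spec}\,A$ combined with the ring-theoretic identity $f^N = \sum b_i f_i$ derived from the Nullstellensatz; here the same outline works, but at every step one must carry along a measure-zero exceptional set, absorb countably many such sets into one, and invoke the almost Nullstellensatz of Corollary of Proposition \ref{Hilbert's Nullstellensatz Groebner ideal} and the almost Noetherian property of $\X$ to discard it. Once these bookkeeping issues are handled, parts (1) and (2) give (3) by taking $f=1$, since $D(1)=\X$.
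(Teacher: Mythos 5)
Your overall route coincides with the paper's: part (1) via evaluation of germs at $p$, part (2) via the natural map $a/f^{n}\mapsto (q\mapsto a/f^{n})$, with almost injectivity obtained from pointwise annihilators plus the almost Nullstellensatz and almost surjectivity from a finite subcover (almost Noetherianness of $\X$) followed by an identity $f^{N}=\sum b_{i}f_{i}$; part (3) is the case $f=1$. There is, however, a concrete flaw in your overlap-reconciliation step of the surjectivity argument. You reconcile $a_{i}/f_{i}$ and $a_{j}/f_{j}$ on $W_{i}\cap W_{j}$ by producing condensed $h_{ij}$ with $h_{ij}(f_{j}a_{i}-f_{i}a_{j})=0$ and then ``replacing $f_{i}$ by $f_{i}\prod_{j}h_{ij}$.'' That replacement shrinks the principal open set from $D(f_{i})$ to $D(f_{i}\prod_{j}h_{ij})$, and the zero set $Z(h_{ij})\subseteq \X$ is a fibrewise algebraic subset that need not lie over a measure-zero subset of $S$ (for instance $Z(x)$ in $S\times\C$ projects onto all of $S$). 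So after the replacement the sets $D(f_{i})$ need no longer cover $D(f)\cap\pi^{-1}(U)$ even up to measure zero, and the subsequent step --- $f^{N}\in_{a}\langle f_{1},\dots,f_{r}\rangle$ via the almost Nullstellensatz --- loses its justification, since it requires $V(\langle f_{1},\dots,f_{r}\rangle)\cap D(f)$ to be almost empty.

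The paper's fix (the standard one) is to apply the already-proved almost injectivity to the localization at $f_{i}f_{j}$ on $D(f_{i}f_{j})$, which yields a relation of the special form $(f_{i}f_{j})^{m}(f_{j}a_{i}-f_{i}a_{j})=0$; one then replaces $f_{i}$ by $f_{i}^{m+1}$ and $a_{i}$ by $f_{i}^{m}a_{i}$, which leaves $D(f_{i})$ unchanged and hence preserves the cover. Relatedly, your extraction of a single $h_{ij}$ valid on the whole overlap from part (1) --- which is only a stalkwise statement --- silently repeats the finite-subcover-plus-Nullstellensatz argument; invoking the almost injectivity of $\psi$ for $f_{i}f_{j}$ is the cleaner packaging and is exactly what produces the needed power of $f_{i}f_{j}$. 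The remaining measure-zero bookkeeping you flag is treated in the paper at essentially the same level of detail as in your sketch, so aside from this one step your proposal matches the paper's proof.
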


\begin{proof}
\begin{enumerate}
\item For $\sigma\in (\widetilde{M})_{p}$, there exists a neighborhood
$U\subseteq \X$ of $p$ such that $\sigma\in \widetilde{M}(U)$.
Define $\varphi(\sigma)=\sigma(p)$, then we get a well defined
$(K[\X]_{[p]})_{\pi(p)}$-homomorphism
$\varphi:(\widetilde{M})_{p} \rightarrow (M_{[p]})_{\pi(p)}$. To
prove that $\varphi$ is surjective, consider $\alpha\in
(M_{[p]})_{\pi(p)}$. Then $\alpha=\frac{a}{f}$ in a neighborhood
$W\subseteq S$ of $\pi(p)$ where $a\in M(W), f\in K[\X](W),
f(p)\neq 0$, $f$ condensed on $W$. Hence $D(f)\subseteq \X$ is a
neighborhood of $p$ and $\frac{a}{f}\in \widetilde{M}(D(f))$
which implies $\frac{a}{f}\in (\widetilde{M})_{p}$ and hence
$\varphi(\frac{a}{f})=\alpha$. To show that $\varphi$ is
injective, we assume that $\sigma, \eta\in (\widetilde{M})_{p}$
such that $\sigma(p)=\eta(p)$. Then we may take a neighborhood
$U\subseteq \X$ of $p$ such that $\sigma=\frac{a}{f},
\eta=\frac{b}{g}$ in $\widetilde{M}(U)$ where $a, b\in M(U), f,
g\in m_{[p]}(U)$. Then there is $h\in m_{[p]}(U)$ such that
$h(ag-bf)=0$ which implies that $\frac{a}{f}=\frac{b}{g}$ in the
neighborhood $W'=D(f)\cap D(g)\cap D(h)$ of $p$. Then
$\sigma=\eta$ in $\widetilde{M}(W)$ and therefore $\sigma=\eta$
in $(\widetilde{M})_{p}$.

\item
Let $U$ be an open subset of $S$ and $f\in K[\X](U)$ condensed
in $U$. If $\eta=\frac{a}{f^k}\in M(U)_f$ for some $a\in M(U)$,
$\eta$ induces an element $\widetilde{\eta}\in
\widetilde{M}(D(f))(U)$ that assigns each $p\in D(f)\cap
\pi^{-1}(U)$, the element $\frac{a}{f^k}\in (M_{[p]})_{\pi(p)}$.
Define $\psi: M(U)_f \rightarrow \widetilde{M}(D(f))(U)$ by
sending $\eta$ to $\widetilde{\eta}$.

Suppose that $\psi(\frac{a}{f^k})=\psi(\frac{b}{f^l})$. Then
$\frac{a}{f^k}=\frac{b}{f^l}\in (M_{[p]})_{\pi(p)}$ for $p\in
D(f)\cap \pi^{-1}(U)$. So there is an open neighborhood
$V\subseteq U$ of $\pi(p)$ such that
$\frac{a}{f^k}=\frac{b}{f^l}$ in $M_{[p]}(V)$. And hence there
is $h\in m_{[p]}(V)$ such that $h(f^la-f^kb)=0$ in $M(V)$. Let
$$\mathscr{A}=\{g\in K[\X](V)| g \mbox{ condensed }, g(f^la-f^kb)=0\}$$ and
$I$ be the ideal of $K[\X](V)$ generated by $\mathscr{A}$. Since
for any $q\in D(f|_V)$, there is $g\in \mathscr{A}$ such that
$g(q)\neq 0$. Therefore $V(I)\cap D(f|_V)=\emptyset$. We have
$f(V(I))=0$. Since $I$ is a condensed ideal, by Hilbert
Nullstellensatz, $f|_V^N\in_a \mathscr{A}$. Hence
$f^N(f^la-f^kb)=_a0$ in $M(V)$ which implies that
$\frac{a}{f^k}=_a\frac{b}{f^l}$ in $M(U)_f$. This proves that
$\psi$ is almost injective.

Let $\sigma \in \widetilde{M}(D(f))(U)$. Then there exists open
cover $\{V_i\}$ of $D(f)$ such that on $V_i$,
$\sigma|_{V_i}=\frac{a_i}{g_i}$ where $g_i\in K[\X](\pi(V_i))$
is not vanishing in $V_i$ and condensed in $\pi(V_i)$. Hence
$V_i\subseteq D(g_i)$. Since the semi-topological Zariski
topology of $\X$ is generated by principal open sets, we may
assume $V_i=D(h_i)$ for some $h_i\in K[\X](U)$ where $h_i$ is
condensed in $\pi(U)$. Then $D(h_i)\subseteq D(g_i)$, we have
$h_i\in \sqrt{<g_i>}$ for each $i$. So $h^m_i=cg_i$ for some
$c\in \mathscr{C}[x_1, ..., x_n](U)$ and
$\frac{a_i}{g_i}=\frac{ca_i}{h^m_i}$. Replacing $h_i$ by $h^m_i$
and $a_i$ by $ca_i$, we may assume that $D(f)$ is covered by the
open subsets $D(h_i)$, and that $\sigma$ is represented by
$\frac{a_i}{h_i}$ in $D(h_i)$.

Since $\X$ is almost Noetherian, we may assume that $D(f)$ is
almost covered by $D(h_1), ..., D(h_t)$ where each $h_i\in
K[\X](W)$.

On $D(h_i)\cap D(h_j)=D(h_ih_j)$, $\frac{a_i}{h_i}$ and
$\frac{a_j}{h_j}$ both represent $\sigma$. Applying the
injectivity proved above, we have
$\frac{a_i}{h_i}=\frac{a_j}{h_j}$ in
$M(\pi(D(h_ih_j)))_{h_ih_j}$. Therefore, there is $m\in \N$ such
that $(h_ih_j)^m(h_ja_i-h_ia_j)=0$ which implies
$h^{m+1}_j(h^m_ia_i)-h^{m+1}_i(h^m_ja_j)=0$. If we replace $a_i$
by $h^m_ia_i$ and $h_i$ by $h^{m+1}_i$, then on $D(h_i)$,
$\sigma$ is again represented by $\frac{a_i}{h_i}$ and we have
$h_ja_i=h_ia_j$ on $D(h_i)\cap D(h_j)$.

Let $J=<h_1, ..., h_t>\subseteq K[\X](W)$ be the condensed ideal
generated by $h_1, ..., h_t$. Since $D(f|_W)\subseteq
\cup^t_{i=1}D(h_i)$, so $f|_W\in \sqrt{J}$ this implies that
$f^m|_W=\sum^t_{i=1}b_ih_i$ for some $b_i\in K[\X](W)$. Let
$a=\sum^t_{i=1}b_ia_i\in K[\X](W)$. Then on $D(h_j)$,
$h_ja=\sum^t_{i=1}b_ih_ja_i=\sum^t_{i=1}b_ih_ia_j=f^m|_Wa_j$. So
$$\frac{a}{f^m|_W}=\frac{a_j}{h_j}$$ on $D(h_j)$. Since
$\frac{a}{f^m|_W}\in M(W)_f$, this shows that $\psi$ is almost
surjective.

\item For any $U\subseteq S$ open, by result above, we have
$\widetilde{M}(\X)(U)=\widetilde{M}(\pi^{-1}(U))(U)\cong M(U)$
and hence $M$ is almost isomorphic to $\widetilde{M}(\X)$ as
$K[\X]$-modules.
\end{enumerate}
\end{proof}

If $M$ is an almost sheaf, and $\alpha\in M(U)$, we write $[\alpha]$
for the class of $\alpha$ in $\mathscr{A}M(U)$.

\begin{lemma}
Suppose that $M$ is an almost sheaf of $K[\X]$-modules over a
condensed space $S$ where $\X$ is a semi-topological algebraic set.
Then $\mathscr{A}(\widetilde{M})\cong \widetilde{\mathscr{A}M}$.
\end{lemma}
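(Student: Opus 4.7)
The plan is to construct a natural morphism $\Phi: \mathscr{A}(\widetilde{M}) \to \widetilde{\mathscr{A}M}$ from the canonical almost-sheaf comparison $\iota: M \to \mathscr{A}M$, and then verify $\Phi$ is an isomorphism by passing to stalks. The starting ingredient is the observation that $\iota$, sending $\alpha \in M(U)$ to its class $[\alpha] \in \mathscr{A}M(U)$, is a morphism of almost sheaves of $K[\X]$-modules; for each point $p \in \X$, localizing at $m_{[p]}$ and taking stalks at $\pi(p)$ yields a canonical homomorphism $(M_{[p]})_{\pi(p)} \to ((\mathscr{A}M)_{[p]})_{\pi(p)}$.

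Applying this pointwise, a section $\sigma \in \widetilde{M}(V)(U)$ locally represented by $\sigma|_{W_i} = a_i/f_i$ is sent to the section of $\widetilde{\mathscr{A}M}(V)(U)$ locally represented by $[a_i]/f_i$, producing a morphism $\widetilde{\iota}: \widetilde{M} \to \widetilde{\mathscr{A}M}$ of bivariant presheaves on $\X$. Because $\mathscr{A}M$ is a genuine sheaf, $\widetilde{\mathscr{A}M}$ is a bivariant sheaf, so the universal property of the associated-sheaf functor provides a unique factorization $\Phi: \mathscr{A}(\widetilde{M}) \to \widetilde{\mathscr{A}M}$.

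To verify $\Phi$ is an isomorphism, I would check it is so stalkwise at each $p \in \X$. Sheafification preserves stalks, so applying Proposition \ref{module principal open set}(1) to both $M$ and $\mathscr{A}M$ gives
$$(\mathscr{A}(\widetilde{M}))_p \cong (\widetilde{M})_p \cong (M_{[p]})_{\pi(p)} \quad \text{and} \quad (\widetilde{\mathscr{A}M})_p \cong ((\mathscr{A}M)_{[p]})_{\pi(p)},$$
and $\Phi_p$ is the map between these induced by $\iota$.

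The main obstacle is therefore to show that the canonical map $(M_{[p]})_{\pi(p)} \to ((\mathscr{A}M)_{[p]})_{\pi(p)}$ is bijective, which amounts to showing that localization at $m_{[p]}$ commutes with almost-sheafification at the point $\pi(p)$. For surjectivity, any class $[\alpha]/f$ in the target is represented by some $\alpha \in M(U - A)$ for a measure-zero set $A$; since $A$ is closed in the measure topology, $U - A$ is open and $\alpha/f \in M_{[p]}(U - A)$ gives a representative in the source mapping to $[\alpha]/f$ in the colimit. For injectivity, if $\alpha/f$ maps to zero then there exists a condensed $g \in m_{[p]}$ with $g \cdot [\alpha] = 0$ in $\mathscr{A}M$, meaning $g\alpha$ vanishes on $U - A'$ for some measure-zero $A'$; hence on the open set $U - A'$ we have $\alpha/f = (g\alpha)/(gf) = 0$ in $M_{[p]}$, killing the germ. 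The delicate bookkeeping here is the payoff from the condensedness hypothesis, which ensures complements of measure-zero sets behave like honest open neighborhoods.
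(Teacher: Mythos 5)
Your construction is exactly the paper's: the map sends a section locally represented by $a_i/f_i$ to the one represented by $[a_i]/f_i$, and the paper simply asserts that this is "not difficult to check" to be an isomorphism. Your stalkwise verification that localization at $m_{[p]}$ commutes with the associated-sheaf construction is a reasonable filling-in of that omitted check, so the proposal is correct and follows essentially the same route.
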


\begin{proof}
For any $[\sigma] \in \mathscr{A}(\widetilde{M})(U)$, there is an
open cover $(U)=\cup_i V_i$, $U-A=\cup_i(V_i-A)$, and $a_i\in
M(\pi(V_i)), f_i\in K[\X](\pi(V_i))$, $0\notin f_i(V_i)$ such that
$\sigma(p)=\frac{a_i}{f_i}\in (M_{[p]})_{\pi(p)}$ for $p\in V_i$. We
define $\sigma'(p)=\frac{[a_i]}{f_i}\in
((\mathscr{A}M)_{[p]})_{\pi(p)}$. Then $\sigma' \in
\widetilde{\mathscr{A}M}(U)$. Not difficult to check the assignment
$\sigma\mapsto \sigma'$ defines an isomorphism
$$\mathscr{A}(\widetilde{M})\rightarrow \widetilde{\mathscr{A}M}$$
\end{proof}

\begin{corollary}
Suppose $\X$ is a semi-topological algebraic set over $S$. Then
$\mathscr{A}M \cong \widetilde{\mathscr{A}M}(\X)$.
\end{corollary}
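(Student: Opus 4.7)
The plan is to derive this from two results already in hand: part (3) of Proposition \ref{module principal open set}, which gives an almost isomorphism $M \cong_a \widetilde{M}(\X)$ of almost sheaves of $K[\X]$-modules on $S$, together with the lemma immediately preceding the corollary, which identifies $\mathscr{A}(\widetilde{M}) \cong \widetilde{\mathscr{A}M}$.

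First I would apply the associated-sheaf functor $\mathscr{A}$ to the almost isomorphism $M \cong_a \widetilde{M}(\X)$ of almost sheaves on $S$. By the observation made just after the definition of associated sheaves, a morphism $\varphi$ of almost sheaves is almost injective/surjective/isomorphic if and only if $\mathscr{A}\varphi$ is injective/surjective/isomorphic. Consequently the almost isomorphism from Proposition \ref{module principal open set}(3) promotes to a genuine isomorphism $\mathscr{A}M \cong \mathscr{A}\bigl(\widetilde{M}(\X)\bigr)$ of sheaves on $S$.

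Next I would invoke the preceding lemma, which provides $\mathscr{A}(\widetilde{M}) \cong \widetilde{\mathscr{A}M}$. The functor $\mathscr{A}$ is applied to a bivariant sheaf slicewise (it operates on each sheaf $\widetilde{M}(U)$, $U \subseteq \X$ open), so evaluating at the maximal open set $U=\X$ gives $\mathscr{A}\bigl(\widetilde{M}(\X)\bigr) = \mathscr{A}(\widetilde{M})(\X) \cong \widetilde{\mathscr{A}M}(\X)$. Chaining this with the isomorphism of the previous paragraph yields $\mathscr{A}M \cong \widetilde{\mathscr{A}M}(\X)$, which is the claim.

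The only delicate point, and hence the main obstacle to record explicitly, is verifying that $\mathscr{A}$ commutes with evaluation at $\X$ on a bivariant sheaf; this is essentially a matter of unwinding definitions, since the preceding lemma has already shown that $\mathscr{A}(\widetilde{M})$ and $\widetilde{\mathscr{A}M}$ agree on every slice, and the isomorphism it constructs is manifestly natural with respect to the restriction maps $r^{UU'}_V$ defining the bivariant structure. Once this naturality is noted, the corollary is a one-line consequence.
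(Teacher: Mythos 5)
Your proposal matches the paper's own argument: the paper likewise applies $\mathscr{A}$ to the almost isomorphism $M\cong_a\widetilde{M}(\X)$ from Proposition \ref{module principal open set}(3) and then invokes the preceding lemma to identify $\mathscr{A}\widetilde{M}(\X)$ with $\widetilde{\mathscr{A}M}(\X)$. The extra care you take with the slicewise evaluation of $\mathscr{A}$ is a reasonable elaboration of a step the paper leaves implicit.
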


\begin{proof}
Since $M\cong_a \widetilde{M}(\X)$, $\mathscr{A}{M}\cong
\mathscr{A}\widetilde{M}(\X)\cong \widetilde{\mathscr{A}M}(\X)$.
\end{proof}

\subsection{Bivariant coherent sheaves}
\begin{definition}
Let $\X$ be an affine semi-topological algebraic set over $S$, and
$\mathscr{F}$ a sheaf of $\mathcal{O}_{\X}$-modules. We say that
$\mathscr{F}$ is quasi-coherent if $\X$ can be covered by affine
open subsets $U_i$ such that for each $i$,
$\widetilde{\mathscr{A}M}_i\cong \mathscr{F}|_{U_i}$ where $M_i$ is
an almost sheaf of $K[U_i]$-module. We say that $\mathscr{F}$ is
coherent if furthermore each $M_i$ can be taken to be an almost
sheaf of finitely generated $K[U_i]$-module.
\end{definition}

We denote by $\mathscr{AS}(\X)$ the category of associated sheaves
of almost sheaves of finitely generated $K[\X]$-modules and
$\mathscr{CS}(\X)$ the category of bivariant coherent sheaves of
$\mathcal{O}_{\X}$-modules.

\begin{proposition}
Let $\X, \Y$ be affine semi-topological algebraic sets over a
condensed space $S$.

\begin{enumerate}
\item The map $\mathscr{A}M \rightarrow \widetilde{\mathscr{A}M}$ is an exact fully
faithful functor from $\mathscr{AS}(\X)$ to $\mathscr{CS}(\X)$.

\item If $M, N$ are $K[\X]$-modules, then $(M\otimes_{K[\X]} N)\widetilde \ \cong
\widetilde{M}\otimes_{\mathcal{O}_{\X}}\widetilde{N}$.

\item If $\{M_i\}$ is any family of
$\mathcal{O}_{\X}$-modules, then $(\bigoplus M_i)\widetilde \
\cong \bigoplus\widetilde{M_i}$.
\end{enumerate}
\end{proposition}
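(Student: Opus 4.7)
The plan is to mimic the classical proof for quasi-coherent sheaves on affine schemes (compare Hartshorne II.5.2--5.3), replacing global sections and localization with their almost and condensed analogues, and using Proposition \ref{module principal open set} as the key technical input.

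For part (1), I would prove exactness by reducing to stalks. Part (1) of Proposition \ref{module principal open set} identifies the stalk $(\widetilde{\mathscr{A}M})_p$ with the localization $((\mathscr{A}M)_{[p]})_{\pi(p)}$, and localization of modules is an exact functor. Hence a short exact sequence in $\mathscr{AS}(\X)$ gives an exact sequence of stalks at every $p\in \X$, which implies that the induced sequence of $\mathcal{O}_{\X}$-modules is exact. Full faithfulness is handled by showing that the global section functor $\mathscr{F}\mapsto \mathscr{F}(\X)$ is a two-sided inverse up to almost isomorphism: the corollary preceding this proposition gives $\mathscr{A}M \cong \widetilde{\mathscr{A}M}(\X)$, so a morphism $\psi: \widetilde{\mathscr{A}M} \to \widetilde{\mathscr{A}N}$ of bivariant sheaves is recovered from its global-section component $\mathscr{A}M \to \mathscr{A}N$. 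Conversely, any morphism $\varphi$ of almost sheaves extends to a sheaf morphism by sending a local section represented on $D(f_i)$ by $a_i/f_i$ to $\varphi(a_i)/f_i$, and part (2) of Proposition \ref{module principal open set} ensures this construction is well-defined and compatible with restriction. The two constructions are mutually inverse.

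For part (2), I would again check stalkwise. By Proposition \ref{module principal open set}(1), at $p \in \X$ the stalk of $\widetilde{M \otimes_{K[\X]} N}$ is $((M\otimes_{K[\X]} N)_{[p]})_{\pi(p)}$, while the stalk of $\widetilde{M} \otimes_{\mathcal{O}_{\X}} \widetilde{N}$ is $(M_{[p]})_{\pi(p)} \otimes_{(K[\X]_{[p]})_{\pi(p)}} (N_{[p]})_{\pi(p)}$; these agree because tensor product commutes with localization over the base ring. Part (3) uses essentially the same argument: localization is a filtered colimit, so it commutes with arbitrary direct sums, and direct sums commute with sheafification, so both sides have stalk $\bigoplus_i ((M_i)_{[p]})_{\pi(p)}$ at every $p$.

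The main obstacle will be keeping track of the measure-zero exceptional sets. Every identification used above holds only up to almost equality, so one must repeatedly take countable unions of such sets when patching local data; in particular the full-faithfulness argument depends on $M \cong_a \widetilde{M}(\X)$, which holds only off a measure-zero subset of $S$ coming from the almost-Groebner hypothesis on $I(\X)$, and gluing over an open cover in (1) or matching representatives of tensors in (2) requires that these exceptional sets not proliferate uncountably. A subsidiary point in (2) is making precise what ``tensor product of almost sheaves'' means; I would take it to be $\mathscr{A}$ applied to the presheaf $U \mapsto \mathscr{A}M(U) \otimes_{\mathscr{A}K[\X](U)} \mathscr{A}N(U)$, after which the stalk computation reduces cleanly to ordinary commutative algebra.
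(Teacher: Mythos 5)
Your proposal follows essentially the same route as the paper: the functor is built by passing a module morphism to its induced maps on localizations and stalks, full faithfulness comes from the global-sections functor $\mathscr{F}\mapsto\mathscr{AF}(\X)$ being an inverse via the identification $\mathscr{A}M\cong\widetilde{\mathscr{A}M}(\X)$, and exactness is deduced from exactness of localization. The paper only writes out part (1) and leaves (2) and (3) unproved; your stalkwise arguments for those (tensor and direct sums commute with localization) are the standard ones and fill that in correctly.
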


\begin{proof}
A morphism of $K[\X]$-modules $\varphi:\mathscr{A}M \rightarrow
\mathscr{A}N$ induces maps $\varphi_{[p]}:\mathscr{A}M_{[p]}
\rightarrow \mathscr{A}N_{[p]}$ for each $p\in \X$ where on each
open set $U\subseteq S$, $\varphi_{[p],
U}(\frac{[a]}{f})=\frac{\varphi_U([a])}{f}$ for $\frac{[a]}{f}\in
\mathscr{A}M_{[p]}(U)$. It therefore induces a map in the stalk
$\varphi_{[p], \pi(p)}:(\mathscr{A}M_{[p]})_{\pi(p)} \rightarrow
(\mathscr{A}N_{[p]})_{\pi(p)}$ and hence a map
$$\widetilde{\varphi}:\widetilde{\mathscr{A}M} \rightarrow \widetilde{\mathscr{A}N}$$ Hence the
map
$$\widetilde{}:\mathscr{AS}(\X) \rightarrow \mathscr{CS}(\X)$$ defined by
$$\mathscr{A}M\mapsto \widetilde{\mathscr{A}M}$$ is a functor.

It has an inverse functor $\Gamma:\mathscr{CS}(\X) \rightarrow
\mathscr{AS}(\X)$ defined by
$$\mathscr{F}\mapsto \mathscr{AF}(\X)$$ and
$$\Gamma:Hom_{\mathcal{O}_{\X}}(\widetilde{\mathscr{A}M}, \widetilde{\mathscr{A}N}) \rightarrow Hom_{\mathscr{A}K[\X]}(\mathscr{A}M,
\mathscr{A}N)$$ defined by
$$\Gamma(\psi)=\mathscr{A}\psi_{\X}$$ where we use the identification
$\widetilde{\mathscr{A}M}(\X)\cong \mathscr{A}M,
\widetilde{\mathscr{A}N}(\X)\cong \mathscr{A}N$ and
$\psi_{\X}:\widetilde{\mathscr{A}M}(\X) \rightarrow
\widetilde{\mathscr{A}N}(\X)$ is the homomorphism of $\psi$ on $\X$.
Therefore the functor $\widetilde{}$ is fully faithful. The
exactness follows from the exactness of localization.
\end{proof}

\begin{definition}
Let $\X, \Y$ be two semi-topological algebraic sets over a condensed
space $S$. A semi-topological $K$-morphism is a pair
$$(\psi, \phi):(\X, K[\X]) \rightarrow (\Y, K[\Y])$$ which satisfy the following properties:
\begin{enumerate}
\item
$\psi:\X \rightarrow \Y$ is continuous and the following diagram
commutes
$$\xymatrix{\X \ar[rr]^{\psi} \ar[rd] && \Y \ar[ld] \\
& S &}$$

\item $\phi:K[\Y] \rightarrow K[\X]$ is a morphism of almost sheaves such that
$\phi(m^{\Y}_{[\psi(p)]}(U))\subseteq m^{\X}_{[p]}(U)$ for any
open sets $U\subseteq S$
\end{enumerate}
\end{definition}

The following proof was suggested by Li Li.
\begin{proposition}
Suppose that $\X, \Y$ are affine semi-topological algebraic sets
over $S$ and $(\psi, \phi):(\X, K[\X]) \rightarrow (\Y, K[\Y])$ is a
semi-topological $K$-morphism.

\begin{enumerate}
\item
If $N$ is a presheaf of $K[\X]$-modules and $p\in \X$, then
$N\otimes_{K[\X]}K[\X]_{[p]}\cong N_{[p]}$ as presheaves of
$K[\X]$-modules.

\item
If $M$ is a presheaf of $K[\Y]$-module, then
$(\mathscr{A}M\otimes_{\mathscr{A}K[\Y]}\mathscr{A}K[\X])_{[p]}\cong
\mathscr{A}M\otimes_{\mathscr{A}K[\Y]}\mathscr{A}K[\X]_{[p]}$.

\item
Let $N$ be a presheaf of $K[\X]$-modules. Consider $N_{[p]}$ as
a presheaf of $K[\Y]$-modules through $\phi$, then
$((N_{[p]})_{[q]})_{\pi_2(q)}\cong (N_{[p]})_{\pi_1(p)}$ as
presheaves of $K[\Y]_{[p]}$-modules.

\item
If $M$ is an almost sheaf of $K[\Y]$-modules, then
$\varphi^*(\widetilde{\mathscr{A}M})\cong
\widetilde{(\mathscr{A}M\otimes_{\mathscr{A}K[\Y]}
\mathscr{A}K[\X])}\cong
\widetilde{\mathscr{A}(M\otimes_{K[\Y]}K[\X])}$.

\item
If $N$ is a presheaf of $K[\X]$-modules, let $_{\Y}N$ denote $N$
as a presheaf of $K[\Y]$-modules through $\varphi$, then
$\psi_{*}\widetilde{\mathscr{A}N}\cong
\widetilde{(_{\Y}\mathscr{A}N)}$.
\end{enumerate}\label{pullback}
\end{proposition}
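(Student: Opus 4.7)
The plan is to mimic the classical scheme–theoretic identities (localization is extension of scalars, tensor commutes with localization, pullback on stalks, etc.) but carried out at three levels: presheaves of modules, the associated sheaves obtained via $\mathscr{A}$, and the bivariant sheaves obtained via the $\widetilde{\ }$ construction. Proposition \ref{module principal open set} and parts (2)–(3) of the preceding proposition (tilde preserves tensor products and direct sums, and is fully faithful with inverse $\Gamma$) will do most of the work; the real content is a single localization computation that uses the hypothesis $\phi(m^{\Y}_{[\psi(p)]}(U))\subseteq m^{\X}_{[p]}(U)$.

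For (1), I would define on each open $U\subseteq S$ the $K[\X](U)$-linear map $N(U)\otimes_{K[\X](U)}K[\X]_{[p]}(U)\to N_{[p]}(U)$ by $n\otimes (f/g)\mapsto fn/g$, and check that it is an isomorphism by the standard argument for localization as a tensor product with respect to the multiplicative system $m_{[p]}(U)$; naturality in $U$ gives a presheaf isomorphism. Part (2) is then formal: apply (1) with $N=\mathscr{A}M\otimes_{\mathscr{A}K[\Y]}\mathscr{A}K[\X]$, and use associativity of $\otimes$ together with $\mathscr{A}K[\X]\otimes_{\mathscr{A}K[\X]}\mathscr{A}K[\X]_{[p]}\cong \mathscr{A}K[\X]_{[p]}$.

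The heart of the argument is (3), where I read $q$ as $\psi(p)$; by the commutative triangle in the definition of a semi-topological $K$-morphism one has $\pi_2(q)=\pi_2(\psi(p))=\pi_1(p)$. The hypothesis $\phi(m^{\Y}_{[\psi(p)]}(U))\subseteq m^{\X}_{[p]}(U)$ says exactly that every element of $m^{\Y}_{[q]}(U)$ is, when viewed in $N_{[p]}(U)$ through $\phi$, already invertible. The universal property of localization then produces a canonical isomorphism $(N_{[p]})_{[q]}\cong N_{[p]}$ of presheaves of $K[\Y]_{[q]}$-modules (compatible with the $K[\X]_{[p]}$-module structure), and taking stalks at $\pi_2(q)=\pi_1(p)$ delivers (3).

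Granted (1)–(3), I would deduce (4) by checking it stalk-wise at $p\in\X$: using Proposition \ref{module principal open set}(1) the stalks of $\widetilde{\mathscr{A}M}$ at $\psi(p)$ and of $\mathcal{O}_{\X}$ at $p$ are the localizations $((\mathscr{A}M)_{[\psi(p)]})_{\pi_2(\psi(p))}$ and $(\mathscr{A}K[\X]_{[p]})_{\pi_1(p)}$, and the definition of $\varphi^{*}$ combined with (2) rewrites the stalk of $\varphi^{*}\widetilde{\mathscr{A}M}$ at $p$ as the stalk at $\pi_1(p)$ of $(\mathscr{A}M\otimes_{\mathscr{A}K[\Y]}\mathscr{A}K[\X])_{[p]}$, which is exactly the stalk at $p$ of $\widetilde{(\mathscr{A}M\otimes_{\mathscr{A}K[\Y]}\mathscr{A}K[\X])}$; the second isomorphism in (4) then follows from part (2) of the preceding proposition together with the compatibility of $\mathscr{A}$ with tensor products. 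For (5), given $V\subseteq\Y$ open and a section of $\widetilde{(_{\Y}\mathscr{A}N)}$ on $V$ locally represented by $[a]/f$, I send it to the section of $\widetilde{\mathscr{A}N}$ on $\psi^{-1}(V)$ locally represented by $[a]/\phi(f)$, which is well defined precisely because $\phi(f)(p)\neq 0$ for $p\in \psi^{-1}(\{f\neq 0\})$; bijectivity is again a stalk check reducing via Proposition \ref{module principal open set} and (3) to the localization identity. The main obstacle I foresee is not any single calculation but the bookkeeping: keeping straight which ring each module is over, whether one is working with a presheaf, an almost sheaf, its associated sheaf, or its bivariant tilde, and promoting presheaf-level isomorphisms to the sheaf level only modulo measure-zero sets — the hypothesis on $\phi$ in part (3) is the one ingredient that is genuinely new compared with the classical affine case, and every other step is a careful translation of standard commutative algebra.
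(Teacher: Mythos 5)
Your proposal matches the paper's proof essentially step for step: the explicit map $n\otimes(f/g)\mapsto fn/g$ in (1), the formal associativity manipulation in (2), the use of $\phi(m^{\Y}_{[\psi(p)]}(U))\subseteq m^{\X}_{[p]}(U)$ to trivialize the second localization in (3) (the paper writes the explicit map $\frac{a/f}{g}\mapsto\frac{a}{\phi(g)f}$ where you invoke the universal property of localization), and the stalkwise computation in (4). The only cosmetic difference is in (5), where the paper simply evaluates sections over $\Y$ while you construct the comparison map on all open sets; both routes are sound.
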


\begin{proof}
\begin{enumerate}
\item
Define a presheaf morphism $\varphi:N\times K[\X]_{[p]}
\rightarrow N_{[p]}$ by assigning every open set $U\subseteq S$,
a homomorphism $\varphi_U:(N\times K[\X]_{[p]})(U) \rightarrow
N_{[p]}(U)$ which is defined by
$$(\frac{f}{g}, b)\mapsto \frac{fb}{g}$$ where $\frac{f}{g}\in
K[\X]_{[p]}(U), b\in M(U)$. Since $\varphi_U$ is
$K[\X](U)$-linear, by the universal property of tensor product,
we get a map
$$\varphi'_U:(N\otimes_{K[\X]} K[\X]_{[p]})(U) \rightarrow N_{[p]}(U)$$ and
hence a morphism $$\varphi':N\otimes_{K[\X]} K[\X]_{[p]}
\rightarrow N_{[p]}$$ of $K[\X]_{[p]}$-modules. By
\cite[Proposition 3.5]{AM}, $\varphi'_U$ is an isomorphism of
$K[\X]_{[p]}(U)$-modules for all $U$ and hence $\varphi'$ is a
presheaf isomorphism of $K[\X]_{[p]}$-modules.

\item
By (1),
$(\mathscr{A}M\otimes_{\mathscr{A}K[\Y]}\mathscr{A}K[\X])_{[p]}\cong
(\mathscr{A}M\otimes_{\mathscr{A}K[\Y]}\mathscr{A}K[\X])\otimes_{\mathscr{A}K[\X]}\mathscr{A}K[\X]_{[p]}\cong
\mathscr{A}M\otimes_{\mathscr{A}K[\Y]}(\mathscr{A}K[\X]\otimes_{\mathscr{A}K[\X]}\mathscr{A}K[\X]_{[p]})\cong
\mathscr{A}M\otimes_{\mathscr{A}K[\Y]}\mathscr{A}K[\X]_{[p]}$

\item
Define $\psi:((N_{[p]})_{[q]})_{\pi_2(q)}\rightarrow
(N_{[p]})_{\pi_1(p)}$ by
$$\frac{\frac{a}{f}}{g} \mapsto \frac{a}{\phi(g)f}$$
for $\frac{a}{f}\in N_{[p]}(U)$ and $g\in m^{\Y}_{[q]}(U)$ for
some $U\subseteq S$ a neighborhood of $q$. This map is clearly
an isomorphism.

\item
$\varphi^*(\widetilde{\mathscr{A}M})_p=(\varphi^{-1}\widetilde{\mathscr{A}M}\otimes_{\varphi^{-1}\mathcal{O}_{\Y}}\mathcal{O}_{\X})_p
\cong
(\mathscr{A}M_{[q]})_{\pi(q)}\otimes_{(\mathscr{A}K[\Y]_{[q]})_{\pi(q)}}(\mathscr{A}K[\X]_{[p]})_{\pi(p)}\cong
(\mathscr{A}M_{[q]})_{\pi(q)}\otimes_{(\mathscr{A}K[\Y]_{[q]})_{\pi(q)}}((\mathscr{A}K[\X]_{[p]})_{[q]})_{\pi(q)}
\cong
((\mathscr{A}M\otimes_{\mathscr{A}K[\Y]}\mathscr{A}K[\X]_{[p]})_{[q]})_{\pi(q)}\cong
(((\mathscr{A}M\otimes_{\mathscr{A}K[\Y]}\mathscr{A}K[\X])_{[p]})_{[q]})_{\pi(q)}\cong
((\mathscr{A}M\otimes_{\mathscr{A}K[\Y]}\mathscr{A}K[\X])_{[p]})_{\pi(p)}\cong
(\widetilde{\mathscr{A}M\otimes_{\mathscr{A}K[\Y]}\mathscr{A}K[\X]})_p$
for each $p\in \X$.

\item
$(\psi_*\widetilde{\mathscr{A}N})(\Y)=\widetilde{\mathscr{A}N}(\psi^{-1}(\Y))=\widetilde{\mathscr{A}N}(\X)\cong
_{\Y}\mathscr{A}N=\widetilde{(_{\Y}\mathscr{A}N)}(\Y)$, hence
$\psi_*\widetilde{\mathscr{A}N}\cong _{\Y}\mathscr{A}N$.
\end{enumerate}
\end{proof}

\begin{definition}
Let $\X$ be a semi-topological over a condensed space $S$, and
$\mathscr{F}$ a sheaf on $\X$. We say that $\alpha\in
\mathscr{F}(U)$ is almost zero, denoted by $\alpha=_a0$, if there is
a set $A\subseteq S$ of measure zero such that
$\alpha|_{U-\pi^{-1}(A)}=0$. And we write $\beta\in_a
\mathscr{F}(U)$ if there is a set $B\subseteq S$ of measure zero
such that $\beta\in \mathscr{F}(U-\pi^{-1}(B))$.
\end{definition}

\begin{proposition}
Let $\mathscr{F}$ be a quasi-coherent sheaf on an affine
semi-topological algebraic set $\X$ over $S$. Then the restriction
map from $\mathscr{F}(U)$ to $\mathscr{F}(U-A)$ where $A$ is a set
of measure zero is an isomorphism.
\end{proposition}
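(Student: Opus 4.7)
The plan is to reduce to the affine case $\mathscr{F}=\widetilde{\mathscr{A}M}$ afforded by quasi-coherence and then trace through the definition of the functor $\widetilde{\ }$ to see that excising $\pi^{-1}(A)$ does not affect sections. Throughout, I interpret $U-A$ as $U-\pi^{-1}(A)$.

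First, since $\mathscr{F}$ is quasi-coherent, $\X$ admits an affine open cover $\{U_i\}$ with $\mathscr{F}|_{U_i}\cong \widetilde{\mathscr{A}M_i}$ for some almost sheaf $M_i$ of $K[U_i]$-modules. The claim is local on $\X$: it can be verified on the cover $\{U\cap U_i\}$ of $U$ using the sheaf property of $\mathscr{F}$ on $\X$, reducing to the case $\mathscr{F}=\widetilde{\mathscr{A}M}$.

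Next I would unfold the definition of $\widetilde{\ }$. A section $\sigma\in \widetilde{\mathscr{A}M}(U)(V)$ with $V\subseteq \pi(U)$ open is a function on $U\cap \pi^{-1}(V)$ whose local representatives have the form $[a_i]/f_i$ with $[a_i]\in \mathscr{A}M(\pi(W_i))$, $f_i\in \mathscr{A}K[\X](\pi(W_i))$ condensed, taken on an open cover $\{W_i\}$ of $U\cap \pi^{-1}(V)$. Restricting attention to opens $V\subseteq \pi(U-A)=\pi(U)-A$, one has $V\cap A=\emptyset$, hence $\pi^{-1}(V)\cap \pi^{-1}(A)=\emptyset$, and therefore $U\cap \pi^{-1}(V)=(U-A)\cap \pi^{-1}(V)$ as sets. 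So sections of $\mathscr{F}(U)$ and of $\mathscr{F}(U-A)$ over $V$ are functions with the same underlying domain, and the restriction map is essentially tautological at the level of domains.

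The remaining content is that the fractional data $[a_i]$, $f_i$ on the two sides are interchangeable. This is exactly the Remark that $\mathscr{AF}(W)=\mathscr{AF}(W-A)$ for any associated sheaf $\mathscr{AF}$: the groups $\mathscr{A}M(\pi(W_i))$ and $\mathscr{A}K[\X](\pi(W_i))$ from which numerators and denominators are drawn are insensitive to removing the measure-zero set $A$. Combining this with the set-theoretic identity of domains yields a two-sided inverse to the restriction, establishing the isomorphism. The main obstacle is bookkeeping between the semi-topological Zariski topology of $\X$, the measure topology of $S$, and the several sheaf-layers (almost sheaf on $S$, associated sheaf on $S$, bivariant sheaf on $\X$); once these are navigated, the Remark does essentially all of the work.
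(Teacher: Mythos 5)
Your proposal is correct and follows essentially the same route as the paper: reduce via the quasi-coherence cover to the case $\mathscr{F}|_{U_i}\cong\widetilde{\mathscr{A}M_i}$, observe that for such sheaves the restriction along removal of a measure-zero set is an isomorphism (because the associated-sheaf groups satisfy $\mathscr{AF}(W)=\mathscr{AF}(W-A)$), and patch. You are somewhat more explicit than the paper on the key vertical arrow, which the paper simply marks as an isomorphism in its commutative square without unwinding the definition of $\widetilde{\ }$.
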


\begin{proof}
Suppose that $\X$ is covered by $U_i$ and $\mathscr{F}|_{U_i}\cong
\widetilde{\mathscr{AN}_i}$. Let $A$ be a set of measure zero. Since
$U-A=\cup_i(U\cap U_i-A)$, $\mathscr{F}(U\cap
U_i-A)=\widetilde{\mathscr{AN}}_i(U\cap U_i-A)$. From the following
commutative diagram
$$\xymatrix{ \mathscr{F}(U\cap U_i) \ar[r]^{\cong} \ar[d] &
\widetilde{\mathscr{AN}}_i(U\cap U_i) \ar[d]^{\cong}\\
\mathscr{F}(U\cap U_i-A) \ar[r]^{\cong} &
\widetilde{\mathscr{AN}}_i(U\cap U_i-A)}$$ we see that the
restriction from $\mathscr{F}(U\cap U_i)$ to $\mathscr{F}(U\cap
U_i-A)$ is an isomorphism. And these isomorphisms patch an
isomorphism between $\mathscr{F}(U)$ and $\mathscr{F}(U-A)$.
\end{proof}

\begin{remark}
\begin{enumerate}
\item If $M$ is a presheaf of $K[\X]$-modules generated by global
sections, then $\mathscr{A}M$ is a finitely generated
$\mathscr{A}K[\X]$-module.

\item $\mathscr{AA}M\cong \mathscr{A}M$.

\end{enumerate}
\end{remark}

We have develop enough machinery such that the proof of the
following result is similar to \cite[Lemma 5.3]{Hart}.

\begin{lemma}\label{extending sections}
Let $\X$ be an affine condensed scheme over $S$, $f\in K[\X](S)$
condensed, and let $\mathscr{F}$ be a quasi-coherent sheaf on $\X$.

\begin{enumerate}
\item If $\alpha\in \Gamma(\X, \mathscr{F})$ whose restriction to $D(f)$
is 0, then for some $n>0$, $f^n\alpha=_a 0$.

\item Suppose that $\alpha\in \mathscr{F}(D(f))$, then for some $n>0$, there is an almost global section $\beta\in_a \mathscr{F}(\X)$
such that $\beta|_{D(f)}=f^n\alpha$.
\end{enumerate}
\end{lemma}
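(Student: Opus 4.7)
The plan is to mimic the classical argument for affine schemes (Hartshorne, Lemma II.5.3), replacing Noetherianness and localization by their almost analogues developed above. Since $\X$ is almost Noetherian (by the preceding proposition), cover $\X$ by finitely many affine open subsets $V_1,\dots,V_r$ (up to a measure zero set $A_0$) of the form $V_i=D(g_i)$ with $g_i\in K[\X]$ condensed, chosen so that $\mathscr{F}|_{V_i}\cong \widetilde{\mathscr{A}N_i}$ for some almost sheaf of finitely generated $K[V_i]$-modules $N_i$. By Proposition \ref{module principal open set}, for each $i$ we have $\mathscr{F}(V_i)\cong_a \mathscr{A}N_i$ and $\mathscr{F}(D(f)\cap V_i)=\mathscr{F}(D(f|_{V_i}g_i))\cong_a \mathscr{A}(N_i)_{f|_{V_i}}$. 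These identifications convert sheaf-theoretic statements over $\X$ into module-theoretic statements in each $N_i$ (up to measure zero).

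For part (1), let $\alpha_i\in \mathscr{A}N_i$ be the element corresponding to $\alpha|_{V_i}$. The hypothesis $\alpha|_{D(f)}=0$ gives $\alpha_i/1=0$ in $(N_i)_{f|_{V_i}}$ (almost), hence $f^{n_i}\alpha_i=_a 0$ for some $n_i$, because each $N_i$ is (almost) finitely generated so the annihilator argument works on finitely many generators. Taking $n=\max_i n_i$, we get $f^n\alpha|_{V_i}=_a 0$ for each $i$, and since the $V_i$ almost cover $\X$, the almost-uniqueness axiom for the almost sheaf $\mathscr{F}$ yields $f^n\alpha=_a 0$.

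For part (2), restrict $\alpha$ to $D(f)\cap V_i$ and use Proposition \ref{module principal open set} to represent $\alpha|_{D(f)\cap V_i}$ by $b_i/f^{n_i}$ with $b_i\in \mathscr{A}N_i$. By replacing $b_i$ by $f^{m-n_i}b_i$ with $m=\max_i n_i$ (and enlarging the $A_0$ set if necessary), we may assume a common exponent $m$, so $\alpha|_{D(f)\cap V_i}=b_i/f^m$. On $V_i\cap V_j$ the two expressions $b_i/f^m$ and $b_j/f^m$ agree in $\mathscr{F}(D(f)\cap V_i\cap V_j)$, which by part (1) applied on the affine piece $V_i\cap V_j$ (covered by finitely many principal opens of $\X$) means that $f^{k_{ij}}(b_i-b_j)=_a 0$ for some $k_{ij}$. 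Taking $k=\max_{i,j}k_{ij}$ (finite by almost Noetherianness), the sections $f^k b_i\in \mathscr{A}N_i$ agree on overlaps up to measure zero, so by the almost extension axiom they glue to an almost global section $\beta\in_a \mathscr{F}(\X)$. Setting $n=m+k$, we obtain $\beta|_{D(f)}=_a f^n\alpha$.

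The main obstacle I expect is purely bookkeeping: tracking the measure zero sets accumulated at each step (the set $A_0$ from the cover, the sets making $\mathscr{F}|_{V_i}\cong \widetilde{\mathscr{A}N_i}$, and the almost equalities on overlaps) and confirming that their union is still measure zero so the final statement holds almost everywhere. The finiteness of the cover, guaranteed by the almost Noetherian property, is essential for this; if the cover were merely countable one would still be fine since countable unions of measure zero sets remain measure zero in a condensed space, but the exponent $n$ would not be controllable. Aside from this, the localization and gluing steps proceed formally as in the classical affine scheme proof, with Proposition \ref{module principal open set} supplying the needed identifications.
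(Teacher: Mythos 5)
Your proposal is correct and is exactly the argument the paper intends: the paper gives no proof of this lemma, stating only that the machinery developed makes the proof ``similar to [Hartshorne, Lemma 5.3],'' and your write-up is precisely that classical argument transported to the almost setting via Proposition \ref{module principal open set}, the almost Noetherian property, and the almost sheaf axioms. The only cosmetic point is that in part (1) the vanishing of $\alpha_i/1$ in the localization already yields $f^{n_i}\alpha_i=_a0$ from the almost injectivity in Proposition \ref{module principal open set}(2) without invoking finite generation of $N_i$; finiteness of the cover is what makes $n=\max_i n_i$ well defined.
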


\begin{proposition}
Suppose that $\X$ is an affine semi-topological algebraic set over
$S$. Then a bivariant sheaf $\mathscr{F}$ of
$\mathcal{O}_{\X}$-modules is quasi-coherent if and only if for
every affine open subset $U\subseteq \X$, there is an almost sheaf
of $K[U]$-modules $M$ such that $\mathscr{F}|_U\cong
\widetilde{\mathscr{A}M}$. Furthermore, $\mathscr{F}$ is bivariant
coherent if and only the same is true, with the extra condition that
$M$ is an almost sheaf of finitely generated $K[U]$-modules.
\end{proposition}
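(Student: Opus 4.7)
The plan is to adapt the standard proof of Hartshorne's Theorem II.5.4 to the almost-sheaf setting, exploiting the almost Noetherianity of $\X$ proved earlier and the extension lemma for sections (Lemma \ref{extending sections}).

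The forward direction is immediate: if $\mathscr{F}|_U \cong \widetilde{\mathscr{A}M}$ for every affine open $U$, then any affine open cover witnesses quasi-coherence (respectively coherence). For the converse, I first observe that being quasi-coherent restricts to any open subset: if $\{U_i\}$ is the given affine cover with $\mathscr{F}|_{U_i}\cong \widetilde{\mathscr{A}M_i}$ and $V\subseteq \X$ is an arbitrary affine open, then $\{V\cap U_i\}$ covers $V$, and by passing to principal opens $D(g)\subseteq V\cap U_i$ (where Proposition \ref{module principal open set} identifies $\widetilde{\mathscr{A}M_i}|_{D(g)}$ with $\widetilde{\mathscr{A}(M_i)_g}$), we may assume $V$ itself admits a quasi-coherent cover by principal opens. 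Thus the problem reduces to the case $V=\X$: namely, show that if $\X$ is affine and $\mathscr{F}$ is quasi-coherent, then $\mathscr{F}\cong \widetilde{\mathscr{A}N}$ for $N := \mathscr{F}(\X)$, regarded as an almost sheaf of $K[\X]$-modules via the structure map.

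The main construction: define the canonical morphism $\Phi\colon \widetilde{\mathscr{A}N} \to \mathscr{F}$ stalkwise by sending $\frac{[a]}{f}\in (\mathscr{A}N_{[p]})_{\pi(p)}$ to the image of $f^{-1}\cdot a$ in $\mathscr{F}_p$, and prove it is an almost isomorphism. Using almost Noetherianity of $\X$, almost-cover $\X$ by finitely many principal opens $D(f_1),\ldots,D(f_r)$, refined so that on each $D(f_j)$ the sheaf $\mathscr{F}$ is almost associated to a localization of some $M_i$. On each $D(f_j)$, Lemma \ref{extending sections}(2) gives the almost surjectivity of $\Phi_{D(f_j)}$: every section on $D(f_j)$ is $f_j^{-n}\beta$ for some almost global $\beta\in_a N$. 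Similarly, Lemma \ref{extending sections}(1) gives almost injectivity: any global section restricting to $0$ on $D(f_j)$ is annihilated almost by some power of $f_j$, hence dies in the localization. Patching these almost isomorphisms over the finite cover, together with the sheaf axioms for $\mathscr{F}$ and the sheaf nature of $\widetilde{\mathscr{A}N}$, yields $\mathscr{F}\cong \widetilde{\mathscr{A}N}$.

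For the coherent statement, suppose each $M_i$ in the given cover is an almost sheaf of finitely generated $K[U_i]$-modules. On each $D(f_j)$ pick a finite generating set; multiply each generator by a suitable power of $f_j$ and apply Lemma \ref{extending sections}(2) to lift it to an almost global section of $N$. Since only finitely many $D(f_j)$ almost cover $\X$, we obtain a finite collection of almost global sections in $N$. The main obstacle is to argue that these lifted sections generate $N$ almost everywhere; this is where one must invoke the almost Hilbert Nullstellensatz (Corollary \ref{Hilbert's Nullstellensatz Groebner ideal}) to conclude $1\in_a \sqrt{\langle f_1,\ldots,f_r\rangle}$, so that a power of $1$ (hence $1$ itself almost) lies in $\langle f_1,\ldots,f_r\rangle$, which allows one to express any almost global section as an almost $K[\X]$-combination of the lifted generators. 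This finite-generation-lifting step, and in particular the patching of locally defined coefficients into globally defined ones modulo measure-zero sets, is the most delicate part of the argument; everything else reduces to standard sheaf-theoretic manipulation adapted to the ``$=_a$'' framework.
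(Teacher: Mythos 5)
Your proposal follows essentially the same route as the paper: both adapt Hartshorne's Theorem II.5.4, reducing to a finite almost-cover of $\X$ by principal open sets $D(f_j)$ (via almost Noetherianity), applying Lemma \ref{extending sections} to identify $\mathscr{F}(D(f_j))$ with the localization of the global sections, and patching the resulting isomorphisms. Your treatment is more detailed than the paper's --- in particular you spell out the coherent case (lifting finite generating sets and using $1\in_a\langle f_1,\ldots,f_r\rangle$), which the paper's proof leaves implicit --- but the underlying argument is the same.
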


\begin{proof}
As in the proof above, we may cover $\X$ by finitely many $D(g_i)$
where $g_i\in \mathscr{K}[\X](S)$ such that
$\mathscr{F}|_{D(g_i)}\cong \widetilde{\mathscr{AN}}_i$. By Lemma
above, $\mathscr{F}|_{D(g_i)}\cong \mathscr{AM}_{g_i}$, hence we get
an isomorphism $\varphi_i:\mathscr{AM}_{g_i} \rightarrow
\mathscr{AN}_i$. Since $\widetilde{\mathscr{AM}_{g_i}}\cong
\widetilde{\mathscr{AM}}|_{D(g_i)}$, we get an isomorphism
$$\widetilde{\varphi}_i:\widetilde{\mathscr{AM}} \rightarrow
\mathscr{F}|_{D(g_i)}$$ These isomorphisms patch together to give us an isomorphism
$$\widetilde{\varphi}:\widetilde{\mathscr{AM}} \rightarrow
\mathscr{F}$$
\end{proof}

\begin{corollary}
Suppose that $\X$ is an affine condensed scheme over $S$. The
functor $M\mapsto \widetilde{M}$ is an equivalence of categories
between the category of sheaves of $K[\X]$-modules and the category
of bivariant quasi-coherent sheaves. Its inverse is given by
$\mathscr{F}\mapsto \mathscr{F}(\X)$. The same functors gives an
equivalence of categories between the category of sheaves of
finitely generated $K[\X]$-modules and the category of bivariant
coherent sheaves.
\end{corollary}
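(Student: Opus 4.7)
The plan is to verify that the pair of functors $M \mapsto \widetilde{M}$ and $\mathscr{F} \mapsto \mathscr{F}(\X)$ are quasi-inverse to one another, reducing everything to statements already proved in the previous propositions. The functor $M \mapsto \widetilde{M}$ lands in the bivariant quasi-coherent category essentially by construction: taking the trivial cover of $\X$ by itself realizes $\widetilde{M}$ in the required local form, so no work is required in one direction. In the other direction, $\mathscr{F} \mapsto \mathscr{F}(\X)$ produces a sheaf of $\mathcal{O}_{\X}(\X) \cong \mathscr{A}K[\X]$-modules, and sends morphisms of bivariant sheaves to morphisms of the global module sheaves in the obvious way.

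To show these are quasi-inverse, I would first treat the composition $M \mapsto \widetilde{M} \mapsto \widetilde{M}(\X)$: by Proposition \ref{module principal open set}(3), we already have $M \cong_a \widetilde{M}(\X)$ as almost sheaves of $K[\X]$-modules, and passing to associated sheaves makes this an honest natural isomorphism. For the other composition $\mathscr{F} \mapsto \mathscr{F}(\X) \mapsto \widetilde{\mathscr{F}(\X)}$, I would invoke the preceding Proposition to choose a covering of $\X$ by principal open sets $D(g_i)$ on which $\mathscr{F}|_{D(g_i)} \cong \widetilde{\mathscr{A}N_i}$, use Lemma \ref{extending sections} to identify $N_i$ almost canonically with the localization $\mathscr{F}(\X)_{g_i}$, and observe that the resulting local isomorphisms $\widetilde{\mathscr{F}(\X)}|_{D(g_i)} \cong \mathscr{F}|_{D(g_i)}$ are compatible on overlaps and therefore patch into a global isomorphism $\widetilde{\mathscr{F}(\X)} \cong \mathscr{F}$. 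The naturality in $\mathscr{F}$ follows from the functoriality of the localization construction used to define $\widetilde{\ }$.

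For the coherent version, the only additional point is that the equivalence restricts to the subcategories in question. If $M$ is finitely generated then each localization $M_{g_i}$ is finitely generated over $K[\X]_{g_i}$, so $\widetilde{M}|_{D(g_i)}$ is coherent by definition and hence $\widetilde{M}$ is bivariant coherent. Conversely, if $\mathscr{F}$ is bivariant coherent, the previous Proposition supplies a cover of $\X$ by affine opens on which $\mathscr{F}$ is given by finitely generated almost sheaves of modules; because $\X$ is almost Noetherian, finitely many such opens already cover $\X$ up to a set of measure zero, and the finite generation propagates to $\mathscr{F}(\X)$ via Lemma \ref{extending sections}.

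The main obstacle is bookkeeping the passage between almost sheaves and their associated sheaves so that the local isomorphisms produced from the coverings actually patch to give a global natural isomorphism (rather than only an almost one), and ensuring that the natural transformations are compatible with the $\mathcal{O}_{\X}$-module structure on both sides. Once the almost-versus-strict distinction is absorbed into the functor $\mathscr{A}$, everything else is a formal consequence of the earlier results: the equivalence statement is really a repackaging of Proposition \ref{module principal open set} together with the local description of bivariant quasi-coherent sheaves proved just above.
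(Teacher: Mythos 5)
The paper states this corollary without an explicit proof, leaving it as an assembly of the preceding results, and your proposal reconstructs exactly that intended argument: quasi-inverseness via Proposition \ref{module principal open set}(3) for one composite and the preceding local-structure proposition together with Lemma \ref{extending sections} for the other, with the almost/strict distinction absorbed by $\mathscr{A}$. Your treatment of the coherent case and of naturality likewise matches what the paper's fully-faithfulness proposition already supplies, so the proposal is correct and takes essentially the same route.
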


\bibliographystyle{amsplain}

\end{document}